\newtheorem{theorem}{Theorem}[section]
\newtheorem{lemma}[theorem]{Lemma}
\newtheorem{prop}[theorem]{Proposition}
\newtheorem{cor}[theorem]{Corollary}
\numberwithin{equation}{section}
\newcommand{\BF}{\mathbb{F}}
\newcommand{\BZ}{\mathbb{Z}}
\def\Vightarrow#1{\smash{\mathop{\longrightarrow}\limits^{#1}}}
\begin{document}
\title{On pro-$p$ analogues of limit groups \\
via extensions of centralizers}
\author{D. H. Kochloukova, P. A. Zalesskii}
\address{
Department of Mathematics,
University of Campinas, Cx. P. 6065, 13083-970 Campinas, SP, Brazil \\
Department of Mathematics,~University of Bras\'\i lia,\\ 70910-900
Bras\'\i lia DF,~Brazil} \email{ desi@ime.unicamp.br,
pz@mat.unb.br}
\thanks{Both  authors are partially supported by
``bolsa de produtividade em pesquisa" from CNPq, Brazil.}
\subjclass[2000]{Primary 20E18; Secondary 20E06, 20E08}
\date{}
\keywords{}
\begin{abstract} We begin a study  of a pro-$p$ analogue of limit groups via
extensions of centralizers and call $\mathcal{L}$ this new class
of pro-$p$ groups. We show that the pro-$p$ groups of
$\mathcal{L}$  have finite cohomological dimension, type
$FP_{\infty}$ and non-positive Euler characteristic. Among the
group theoretic properties it is proved that they are
free-by-(torsion-free poly -procyclic) and if non-abelian do not have a
finitely generated non-trivial normal subgroup of infinite index.
Furthermore it is shown that every 2 generated pro-$p$ group in
the class $\mathcal{L}$ is either free pro-$p$ or abelian.
\end{abstract}
\maketitle

\section{Introduction}
Abstract limit groups have recently attracted a great deal of
attention and
 played a key role in the solution of the famous Tarski
problems (\cite{K-M-06}-\cite{K-M1-05}, \cite{S1}-\cite{S6}). V.
Remeslennikov has initiated their  study in \cite{R-89} referring
to them as $\exists$-free groups, reflecting the fact that these
groups  have the same existential theory as a free group, or
$\omega$-residually free groups. O. Kharlampovich and A. Myasnikov
have studied abstract limit groups extensively under the name
fully residually free groups, i.e. groups $G$ such that for every
finite subset $X$ of $G$ there is a free group $F$ together with a
homomorphism of groups $\varphi : G \to F$ whose restriction to
$X$ is injective (see \cite{khamya} and \cite{khamya2}). In fact,
abstract limit groups $G$ are exactly  finitely generated   fully
residually free groups. The most relevant definition for this
paper is that abstract limit groups are finitely generated
subgroups of a group obtained by a finite sequence of extensions
of centralizers starting with a free group of finite rank, i.e.
every limit group can be viewed as a finitely generated subgroup
of an inductively constructed  group $B_n=B_{n-1}*_C A$ by forming
a free amalgamated product of a group $B_{n-1}$ and a free finite
rank abelian group $A$ with a cyclic amalgamation $C$ that is
self-centralized in $B_{n-1}$ and the class $B_0$ contains
precisely the finitely generated free groups.

The objective of this paper is to begin the study of a pro-$p$
analogue of limit groups. We define a class of pro-$p$ groups
$\mathcal{L}$ as the class that contains the finitely generated
pro-$p$ subgroups of a sequence of  extensions of centralizers
performed in the category of pro-$p$ groups starting with a
finitely generated free pro-$p$ group, see Section \ref{defin}.
When the term finitely generated is used for a pro-$p$ group we
mean of course topologically finitely generated. Note that the
geometric approach of Z. Sela \cite{S1}  is not available in the
pro-$p$ case and we do not know whether the class of pro-$p$
groups we define  contains only fully residually free pro-$p$
groups or contains all finitely generated fully residually free
pro-$p$ groups.

We study the group theoretic structure and the homological
properties of a pro-$p$ group in $\mathcal{L}$ and show that
various results that are known for abstract limit groups hold in
the pro-$p$ case. For example as shown in Section
\ref{sectioncentral} the pro-$p$ groups from the class
$\mathcal{L}$  are transitive commutative and the centralizer and
the normalizer of a closed procyclic subgroup always coincide and
are abelian. Furthermore every virtually soluble pro-$p$ group
from the class $\mathcal{L}$ is abelian. We list our main results
in the following

\medskip
{\bf Theorem.} {\it Let $G$ be a pro-$p$ group from the class
$\mathcal{L}$. Then
\begin{enumerate}
\item $G$ is free-by-(torsion free poly-procyclic); \item $G$ is of
finite cohomological dimension; \item $G$ is of type $FP_{\infty}$
and has non-positive Euler characteristic; \item If $G$ is
non-abelian then $G$   does not have a finitely generated
non-trivial normal subgroup of infinite index; \item If $G$ is
$2$-generated then   $G$ is either free pro-$p$ or free pro-$p$
abelian.\end{enumerate}}

\medskip
In the abstract case the same results hold. The fact that an
abstract limit group is of type $FP_{\infty}$ follows directly
from Bass-Serre theory and the abstract case of (4) is proved  in
\cite{BH}. The properties that abstract limit groups are
free-by-(torsion-free nilpotent) and of non-positive Euler
characteristic were proved in \cite{desi}.

The properties (1)-(3) stated above  are proved in different
sections of the paper: Lemma \ref{torsion}, Proposition
\ref{freenilpotent}, Corollary \ref{infinity} and Theorem
\ref{Eulerchar}. The proof of (4)  is included in Section
\ref{sectionnormal}, see Theorem \ref{normalabelian} and the proof
of (5) is the subject of Section \ref{two generated}.

We note that the methods traditionally used to prove statements
about abstract limit groups can not be used in the pro-$p$ case.
First observe that an element  of a pro-$p$ group can not be
expressed as a finite word of generators; this eliminates the
possibility to use combinatorial methods in their original sense.
The geometric version of combinatorial group theory, namely the
Bass-Serre theory of groups acting on trees, is also heavily used
in the theory of abstract limit groups. The profinite, in
particular pro-$p$ version of Bass-Serre theory, exists (see for
example \cite{horizons}) but not in full strength. This theory
developed by O. Melnikov, L. Ribes and P. Zalesskii
\cite{melnikov}, \cite{ZM}, \cite{proper}, \cite{RZ1},
\cite{Pavel1}  is one of the main tools of  our paper.

We finish the paper with a section of open questions, where we
list further possible properties of the class $\mathcal{L}$.

\section{Preliminaries on pro-$p$ groups acting on trees}
\label{preliminaries} In this section we collect properties of
pro-$p$  groups acting on pro-$p$ trees and free amalgamated
pro-$p$ products that will be used in the proofs later on. Further
information on this subject can be found in \cite{horizons}.

In accordance with \cite{horizons} the triple $(\Gamma,d_0,d_1)$
is a profinite graph, provided, $\Gamma$ is a boolean space and
$d_0,d_1:\Gamma \rightarrow \Gamma$  are continuous with
$d_i\empty d_j=d_j$ ($i,j\in\{0,1\}$). The elements in $V(\Gamma)
:=d_0 (\Gamma) \cup d_1 (\Gamma)$ and $E( \Gamma):=\Gamma
\setminus V( \Gamma) $ are called the vertices and  edges of
$\Gamma$ respectively, and, for $e\in E( \Gamma)$, $d_0(e)$ and
$d_1 (e)$ are called the initial and terminal vertices of $e$.
Note that $d_0, d_1$ are the identity when restricted to
$V(\Gamma)$. When there is no danger of confusion, we shall simply
write $\Gamma$ instead of $(\Gamma,d_0,d_1)$.

Let $(E^*,*)= (\Gamma/V(\Gamma), *)$ be a pointed profinite
quotient space with $V(\Gamma)$ as a distinguished point. Let
$\BF_p[[E^*(\Gamma),*]]$ and  $\BF_p[[V(\Gamma)]]$ be free
profinite  $\BF_p$-modules over the pointed profinite space
$(E^*(\Gamma),*)$ and over the profinite space $V(\Gamma)$ (cf.
\cite{PavelRibesbook}). Then we have the following complex of free
profinite $\BF_p$-modules
\begin{equation} \label{singularcomplex}
 0 \longrightarrow \BF_p[[E^*(\Gamma),*]] \Vightarrow{\delta}
\BF_p[[V(\Gamma)]] \Vightarrow{\epsilon} \BF_p
 \longrightarrow 0,
\end{equation} where $\delta(e)=d_1(e)-d_0(e)$ for all
$e\in E^*(\Gamma)$ and $\epsilon(v)=1$ for all $v\in V(\Gamma)$.

The profinite graph $\Gamma$ is {\it connected} if
(\ref{singularcomplex}) is exact in the middle. A {\it connected
component} of $\Gamma$ is a maximal profinite subgraph that is
connected. The graph $\Gamma$ is called a {\it pro-$p$ tree} if
(\ref{singularcomplex}) is exact. We say that  a pro-$p$ group $G$
acts on the pro-$p$ tree $\Gamma$ if it acts continuously on
$\Gamma$ and the action commutes with $d_0$ and $d_1$. We
 denote by $G_t$ the stabilizer of $t \in V(T) \cup E(T)$ in $G$.

Let $H, M$ and $S$ be pro-$p$ groups such that $S$ embeds as a
pro-$p$ subgroup of both $H$ and $M$,  and let $G = H \amalg_S M$
be the amalgamated free pro-$p$ product. We say that this
amalgamated free pro-$p$ product is proper if $H$ and $M$ embed in
$G$. If the above decomposition of $G$ as an amalgamated free
pro-$p$ product is proper there is a pro-$p$ tree $T$ on which $G$
acts. By definition $V(T) = \{ gH \}_{g \in G} \cup \{ gM \}_{g
\in G}$ and $E(G) = \{ gS \}_{g \in G}$. For $e = gS \in E(T)$ we
have $d_0(e) = gH$ and $d_1(e) = gM$. Note that $T/G$ is just an
edge with two vertices. Furthermore if the pro-$p$ group $G$ is
finitely generated then $T$ is second countable.
\begin{theorem} \label{Ribesproper}  \cite[Theorem 3.2]{proper} A
free pro-$p$ product with procyclic amalgamation is proper.
\end{theorem}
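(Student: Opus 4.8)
The plan is to establish properness directly from its definition in the excerpt, namely that the canonical maps $H \to G$ and $M \to G$ are injective, and to do so by reducing to the case in which the factors are finite $p$-groups. First I would set up the reduction. Consider the amalgamation-compatible pairs $(U,V)$ of open normal subgroups $U \trianglelefteq H$, $V \trianglelefteq M$, meaning that $U \cap S = V \cap S =: W$ with $W$ open normal in $S$; such pairs are cofinal because $S$ is closed in both $H$ and $M$, so $U \cap S$ and $V \cap S$ can be forced below any prescribed open subgroup of $S$. For such a pair the image of $S$ in $H/U$ and in $M/V$ is the finite procyclic, hence cyclic, group $S/W$, so one may form the finite-factor amalgam $G_{U,V} := (H/U) \amalg_{S/W} (M/V)$, and the canonical maps assemble into $G \to G_{U,V}$ whose restriction to $H$ is $H \to H/U \to G_{U,V}$. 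If each $G_{U,V}$ is proper, then given $1 \ne h \in H$ I choose $U$ with $h \notin U$ and a compatible $V$; the image of $h$ in $G_{U,V}$ is $hU \ne 1$ (since $H/U \hookrightarrow G_{U,V}$), so $h$ has nontrivial image in $G$. As $h$ was arbitrary this shows $H \to G$ is injective, and likewise for $M$. Thus properness of $G$ reduces to properness of the finite-factor amalgams.

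This leaves the core statement: if $A$ and $B$ are finite $p$-groups sharing a common cyclic subgroup $C$, then $A \amalg_C B$ is proper. Here I would use the following convenient criterion, which avoids analysing the infinite amalgam as a whole: it suffices to exhibit a single pro-$p$ (indeed finite $p$-) group $L$ together with injective homomorphisms $\alpha : A \to L$ and $\beta : B \to L$ satisfying $\alpha|_C = \beta|_C$. For then the universal property of the pro-$p$ amalgam yields $\phi : A \amalg_C B \to L$ with $\phi|_A = \alpha$ and $\phi|_B = \beta$, and since $\alpha$ and $\beta$ are injective so are the structure maps $A \to A \amalg_C B$ and $B \to A \amalg_C B$. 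The construction of such an $L$ is exactly where the cyclicity of $C$ is exploited: one must simultaneously realise the two given inclusions $C \hookrightarrow A$ and $C \hookrightarrow B$ as the restriction of one map into $L$, and for a cyclic amalgamating subgroup this can be arranged by spreading the amalgamation across an auxiliary cyclic factor. For instance, in the model case $A = B = \BZ/p^2$ amalgamated over $C = \BZ/p$ one may take $L = (\BZ/p^2) \times (\BZ/p)$ and embed the two copies via the generators $(1,0)$ and $(1,1)$, whose $p$-th powers both equal the generator $(p,0)$ of $C$; the general construction follows this pattern, using the unique subgroup chain of the cyclic group $C$.

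The step I expect to be hardest is precisely this finite-factor case, and within it the explicit production of the target $L$ respecting the amalgamation; the obstruction is genuine, since for non-cyclic amalgamating subgroups the required compatible embeddings need not exist, and it is here that the hypothesis on $S$ is used in an essential way. I would carry this out either by the direct construction sketched above or, if a uniform argument is preferable, by induction reducing $C$ to its subgroup of index $p$ until one reaches $C = 1$, where the amalgam is the free pro-$p$ product $A \amalg B$ and properness is automatic. By comparison the inverse-limit reduction of the first paragraph is essentially formal, requiring only that one check cofinality of the amalgamation-compatible pairs and compatibility of the factor embeddings with the transition maps $G_{U,V} \to G_{U',V'}$, so that the injections survive passage to the limit.
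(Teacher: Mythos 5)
The theorem you are proving is not proved in the paper at all: it is imported from Ribes's 1971 paper (reference \cite{proper}, Theorem 3.2), so the comparison must be with that proof, whose two-step skeleton --- inverse-limit reduction to finite factors, followed by compatible embeddings of the two finite factors into a single finite $p$-group --- you have correctly reconstructed. The fatal gap is in the second step, which is where the entire content of the theorem lives. The claim that two finite $p$-groups $A$, $B$ sharing a cyclic subgroup $C$ admit injective homomorphisms into one finite $p$-group $L$ agreeing on $C$ is precisely Higman's theorem on amalgams of $p$-groups (G.~Higman, \emph{Amalgams of $p$-groups}, J.\ Algebra 1 (1964)), a genuinely nontrivial result which, as you yourself observe, fails for non-cyclic $C$. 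Your worked example only treats the case where $A$ and $B$ are themselves cyclic, where everything is abelian and a direct-product target with ``twisted diagonal'' generators is easy to exhibit; the sentence ``the general construction follows this pattern'' conceals all of the difficulty, and no such recipe is available for arbitrary $A$, $B$. Your fallback --- induct by replacing $C$ with its index-$p$ subgroup $C'$ until $C=1$ --- is not a valid reduction either: $A \amalg_C B$ is a \emph{quotient} of $A \amalg_{C'} B$ (one divides by the closed normal subgroup generated by the elements $\iota_A(c)\iota_B(c)^{-1}$, $c \in C$), and injectivity of the factor maps does not pass to quotients, so properness over $C'$ gives nothing over $C$.

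There is a second, more repairable, gap in the reduction itself: you do not establish that your amalgamation-compatible pairs are cofinal. Forcing $U \cap S$ and $V \cap S$ below a prescribed open subgroup of $S$ does not make them \emph{equal}, and equality is needed even to form $G_{U,V}$. The claim is true here, but any proof must use procyclicity, which your argument for this step never invokes: since the open subgroups of $S$ form the chain $\{S^{p^n}\}$, an elementary induction in finite $p$-groups (pass to a central subgroup of order $p$ and induct) shows that inside any prescribed open normal subgroup $U_0 \trianglelefteq H$ one can realize $U \cap S = S^{p^n}$ exactly, for every sufficiently large $n$, by some open normal $U \subseteq U_0$; doing the same in $M$ and matching the exponents gives cofinal compatible pairs. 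The softness of your stated justification is itself a warning sign, since for a non-procyclic amalgamated subgroup the theorem is false already for finite factors, so no argument this formal can suffice. In summary: the architecture matches Ribes's actual proof, but as written the proposal assumes, rather than proves, the two points (Higman's theorem above all) that constitute the theorem.
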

\begin{theorem} \label{freeaction} \cite[Prop.~3.5,~Cor.~3.6]{horizons} Let
$G$ be a pro-$p$ group acting on a pro-$p$ tree $T$ and $N =
\overline{\langle \{ G_v \}_{v \in V(T)} \rangle}$. Then $G/N$
acts on the pro-$p$ tree $T / N$ and $G/N$ is a free pro-$p$
group.
\end{theorem}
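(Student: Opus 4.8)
The plan is to exhibit $G/N$ as a group acting \emph{freely} on the quotient graph $T/N$, to verify that this quotient is again a pro-$p$ tree, and then to invoke the pro-$p$ analogue of the Nielsen--Schreier theorem. First I would check that $N$ is a closed normal subgroup of $G$. Since the action commutes with $d_0,d_1$, for $g\in G$ and $v\in V(T)$ one has $gG_vg^{-1}=G_{gv}$, so conjugation by $g$ permutes the family $\{G_v\}_{v\in V(T)}$ and hence preserves the closed subgroup they generate; thus $N\trianglelefteq G$. Consequently $N$ acts on $T$, the orbit space $T/N$ is a profinite graph, and $G/N$ acts continuously on it.

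The heart of the argument is to prove that $T/N$ is a pro-$p$ tree. I would regard the complex (\ref{singularcomplex}) for $T$,
\[
0 \longrightarrow \BF_p[[E^*(T),*]] \Vightarrow{\delta} \BF_p[[V(T)]] \Vightarrow{\epsilon} \BF_p \longrightarrow 0,
\]
as an exact sequence of $\BF_p[[N]]$-modules and apply the homology functors $H_i(N,-)=\mathrm{Tor}_i^{\BF_p[[N]]}(\BF_p,-)$. Because $\BF_p[[V(T)]]$ and $\BF_p[[E^*(T),*]]$ are free profinite modules over the $N$-spaces $V(T)$ and $(E^*(T),*)$, their $N$-coinvariants $H_0(N,-)$ are the free modules $\BF_p[[V(T/N)]]$ and $\BF_p[[E^*(T/N),*]]$; hence $H_0$ of the sequence is precisely the complex (\ref{singularcomplex}) for $T/N$, whose exactness is what must be established. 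The long exact homology sequence delivers middle exactness (connectedness of $T/N$) automatically, since it is already exact at the $\BF_p[[V(T/N)]]$ term. Injectivity of the induced boundary map $\bar\delta$, which is the condition that $T/N$ be a tree rather than merely connected, is equivalent via the same long exact sequence to surjectivity of the map $H_1(N,\BF_p[[V(T)]])\to H_1(N,\BF_p)$ induced by $\epsilon$.

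The crucial point, and the step I expect to be the main obstacle, is to identify this surjectivity with the defining hypothesis on $N$. By the profinite version of Shapiro's lemma applied to the permutation module $\BF_p[[V(T)]]$, the group $H_1(N,\BF_p[[V(T)]])$ decomposes over the vertex orbits into the contributions $H_1(G_v,\BF_p)$ (using that the $N$-stabilizer of $v$ equals $G_v$, as $G_v\subseteq N$), and the map above becomes the assembly of the inclusion-induced maps $H_1(G_v,\BF_p)\to H_1(N,\BF_p)$. Now $H_1(N,\BF_p)$ is the Frattini quotient $N/\overline{N^p[N,N]}$, and its being generated by the images of the $H_1(G_v,\BF_p)$ is exactly the assertion that the subgroups $G_v$ topologically generate $N$, which holds by the very definition $N=\overline{\langle\{G_v\}_{v\in V(T)}\rangle}$. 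This yields the surjectivity, hence the injectivity of $\bar\delta$, so $T/N$ is a pro-$p$ tree.

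It remains to verify that $G/N$ acts freely on $T/N$ and to conclude. The stabilizer in $G/N$ of a vertex orbit $Nv$ is $NG_v/N$, which is trivial since $G_v\subseteq N$; for an edge $e$ the stabilizer is $NG_e/N$, and here $G_e\subseteq G_{d_0(e)}\subseteq N$ because an edge stabilizer is contained in the stabilizers of its endpoints, so this too is trivial. Thus no nontrivial element of $G/N$ fixes a vertex or an edge of $T/N$, i.e.\ the action is free. Finally I would apply the pro-$p$ analogue of the Nielsen--Schreier theorem, that a pro-$p$ group acting freely on a pro-$p$ tree is free pro-$p$, to conclude that $G/N$ is a free pro-$p$ group.
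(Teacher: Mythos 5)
The paper itself contains no proof of this statement: Theorem \ref{freeaction} is imported verbatim from \cite[Prop.~3.5,~Cor.~3.6]{horizons}, so the only meaningful comparison is with the argument in that reference, and your proposal is essentially a correct reconstruction of it. Your strategy --- pass to $N$-coinvariants of the exact complex (\ref{singularcomplex}) for $T$, identify the coinvariants of $\BF_p[[V(T)]]$ and $\BF_p[[E^*(T),*]]$ with the corresponding modules for $T/N$, get connectedness of $T/N$ for free from right-exactness, reduce injectivity of $\bar\delta$ to surjectivity of $H_1(N,\BF_p[[V(T)]])\to H_1(N,\BF_p)$, check that the stabilizers $NG_v/N$ and $NG_e/N$ in $G/N$ are trivial, and invoke the theorem of \cite{horizons} that a pro-$p$ group acting freely on a pro-$p$ tree is free pro-$p$ --- is exactly the homological mechanism of the cited source; note that the authors use the same coinvariants trick, in the easier free-action case, to prove their Lemma \ref{converse}. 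The one step you should phrase more carefully is the Shapiro step: when $V(T)/N$ is infinite, saying that $H_1(N,\BF_p[[V(T)]])$ ``decomposes over the vertex orbits'' into the groups $H_1(G_v,\BF_p)$ is a claim that homology commutes with a continuously indexed (completed) direct sum over the profinite space $V(T)/N$, which needs the permutation-module machinery of \cite{horizons}. However, your conclusion requires only a weaker and easily justified fact: each orbit $Nv$ is closed and $N$-isomorphic to $N/G_v$ (since $N_v=G_v$), the inclusion $Nv\hookrightarrow V(T)$ induces $\BF_p[[N/G_v]]\to\BF_p[[V(T)]]$, and by Shapiro's lemma the composite $H_1(G_v,\BF_p)\cong H_1(N,\BF_p[[N/G_v]])\to H_1(N,\BF_p[[V(T)]])\to H_1(N,\BF_p)$ is the map induced by the inclusion $G_v\subseteq N$. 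The image of $H_1(N,\BF_p[[V(T)]])$ in $H_1(N,\BF_p)=N/\overline{N^p[N,N]}$ is closed by compactness and contains the image of every $G_v$; since the $G_v$ topologically generate $N$, that image is everything, giving the surjectivity you need. With that rephrasing the proof is complete and faithful to the cited one.
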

\begin{lemma} \label{converse} Let $G$ be a pro-$p$ group acting on a
pro-$p$ tree $T$
 such that $T /G$ is a pro-$p$ tree. Then $G = \overline{\langle \{ G_v
\}_{v \in V(T)} \rangle}$.
\end{lemma}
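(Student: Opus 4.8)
The plan is to factor out the closed subgroup generated by the vertex stabilizers and to show, by a homological computation, that the resulting quotient is trivial. Set $N = \overline{\langle \{ G_v \}_{v \in V(T)} \rangle}$. Since $g G_v g^{-1} = G_{gv}$ with $gv \in V(T)$ for all $g \in G$, the subgroup $N$ is normal in $G$. By Theorem \ref{freeaction} the quotient $F := G/N$ is a free pro-$p$ group acting on the pro-$p$ tree $S := T/N$, and clearly $S/F = (T/N)/(G/N) = T/G$, which is a pro-$p$ tree by hypothesis. I would first verify that $F$ acts \emph{freely} on $S$: the stabilizer in $F$ of a vertex $Nv$ consists of the cosets $gN$ with $gv \in Nv$, that is with $g \in N G_v = N$ (as $G_v \subseteq N$), hence is trivial; and since the action commutes with $d_0$, an edge stabilizer satisfies $G_e \subseteq G_{d_0(e)} \subseteq N$, so the same computation shows the edge stabilizers in $F$ are trivial as well.

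Because $F$ acts freely on the profinite spaces $V(S)$ and $E(S)$, the modules $\BF_p[[V(S)]]$ and $\BF_p[[E^*(S),*]]$ are free $\BF_p[[F]]$-modules. The exactness of the complex (\ref{singularcomplex}) for the pro-$p$ tree $S$ then exhibits
$$0 \longrightarrow \BF_p[[E^*(S),*]] \Vightarrow{\delta} \BF_p[[V(S)]] \Vightarrow{\epsilon} \BF_p \longrightarrow 0$$
as a free resolution of the trivial module $\BF_p$ over $\BF_p[[F]]$ of length one. I would then apply the coinvariants functor $\BF_p \widehat{\otimes}_{\BF_p[[F]]} -$, which computes $H_*(F,\BF_p)$ from this resolution. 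Since the coinvariants of a free module on a profinite $F$-space return the free module on the orbit space, the resulting complex is exactly the chain complex of the quotient graph $S/F = T/G$, with boundary map the induced map $\bar\delta$.

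Now the hypothesis enters decisively: as $T/G$ is a pro-$p$ tree, the complex (\ref{singularcomplex}) for $T/G$ is exact, so $\bar\delta$ is injective. Reading off $H_1$ from the length-one resolution then gives $H_1(F,\BF_p) = \ker \bar\delta = 0$. For a pro-$p$ group this forces the Frattini quotient to be trivial, whence $F = 1$; that is, $G = N = \overline{\langle \{ G_v \}_{v \in V(T)} \rangle}$, as required.

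The step I expect to be the main obstacle is the passage from a free action to free modules, together with the identification of the coinvariants complex with the chain complex of $S/F$. This rests on the standard but not entirely formal facts that a free action of a pro-$p$ group on a profinite space yields a free profinite $\BF_p[[F]]$-module and that coinvariants turn such a module into the free module on the orbit space; I would isolate these as auxiliary statements drawn from the theory of profinite modules. Everything else is bookkeeping with the short length-one resolution.
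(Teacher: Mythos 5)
Your proposal is correct and is essentially identical to the paper's own proof: set $N=\overline{\langle \{G_v\}_{v\in V(T)}\rangle}$, invoke Theorem \ref{freeaction} to get the free pro-$p$ group $G/N$ acting freely on $T/N$, read the complex (\ref{singularcomplex}) for $T/N$ as a length-one free resolution of $\BF_p$ over $\BF_p[[G/N]]$, apply $\widehat{\otimes}_{\BF_p[[G/N]]}\BF_p$ to obtain the complex of $T/G$, and use its exactness (the hypothesis that $T/G$ is a pro-$p$ tree) to get $H_1(G/N,\BF_p)=0$ and hence $G/N=1$. The only difference is that you verify by hand the freeness of the induced action and the identification of the coinvariants complex with the chain complex of $T/G$, details the paper leaves implicit by citing the theory.
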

\begin{proof} Let $\widetilde G=\overline{\langle \{ G_v \}_{v \in V(T)}
\rangle}$. By  Theorem \ref{freeaction}
 $G/\widetilde G$ acts freely on $T/\widetilde G$. This means that we have a
free resolution of $\BF_p$ over $\BF_p[[G / \widetilde{G}]]$
$$0 \longrightarrow \BF_p[[E^*(T/\widetilde G),*]] \Vightarrow{\delta}
\BF_p[[V(T/\widetilde G)]] \Vightarrow{\epsilon} \BF_p
 \longrightarrow 0.$$
 Tensoring it with $ \widehat{\otimes}_{\BF_p[[G / \widetilde{G}]]} \BF_p$
we get the exact sequence (\ref{singularcomplex}) for $T/G$
implying that
 $H_1(G/ \widetilde G,\BF_p)=(G/\widetilde G)/[G/ \widetilde G, G/
\widetilde G] (G/\widetilde G)^p =0$. It
 follows that $G/ \widetilde G$ is trivial, as needed.\end{proof}
\begin{theorem} \label{actions} \cite[Thm.~3.18]{horizons} Let $G$ be a
pro-$p$ group acting on a pro-$p$ tree $T$. Then one of the
following holds :

a) $G$ is the stabilizer of a vertex of $T$;

b)  $G$ has a free non-abelian pro-$p$ subgroup $P$ such that $P
\cap G_v = 1$ for every vertex $v$ of $T$;

c)  There exists an edge $e$  of $T$ whose stabilizer $G_e$ is
normal in $G$ and the quotient group $G / G_e$ is solvable and
isomorphic to the one of the following groups : $\BZ_p$ or an
infinite dihedral pro-$2$ group $C_2 \amalg C_2$.
\end{theorem}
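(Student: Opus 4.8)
The plan is to run the whole argument through the closed normal subgroup $N=\overline{\langle \{G_v\}_{v\in V(T)}\rangle}$ generated by all vertex stabilizers. By Theorem \ref{freeaction} the quotient $G/N$ is a free pro-$p$ group acting freely on the pro-$p$ tree $T/N$. The three alternatives of the theorem will correspond to the three possibilities for the rank of $G/N$: rank at least $2$, rank $1$, and rank $0$.

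Suppose first that $G/N$ is free non-abelian, i.e. of rank at least $2$. Since free pro-$p$ groups are projective, the surjection $G\to G/N$ splits; a splitting $s$ produces $P=s(G/N)$, a free non-abelian pro-$p$ subgroup isomorphic to $G/N$ with $P\cap N=1$. If $x\in P\cap G_v$ for some vertex $v$, then the image of $x$ in $G/N$ fixes the image of $v$ in $T/N$, so by freeness of the action it is trivial, whence $x\in P\cap N=1$. Thus $P\cap G_v=1$ for every $v$, which is precisely case (b). (Here $G$ cannot fix a vertex, since that would force $G=G_v\subseteq N$ and $G/N=1$.)

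Next suppose $G/N\cong\BZ_p$. Taking a splitting as above gives a procyclic $P=\overline{\langle t\rangle}$ acting on $T$ without fixed vertex, so $G=N\rtimes P$ acts ``along a line''. Here the aim is to exhibit an edge $e$ with $G_e=N$: since $N$ is normal and $G/N\cong\BZ_p$ is Hopfian, any such $e$ yields $G_e$ normal in $G$ with $G/G_e\cong\BZ_p$, i.e. case (c). The content of the step is to show that the free $\BZ_p$-action of $G/N$ on $T/N$ is the expected pro-$p$ line, along which the vertex and edge stabilizers all coincide and equal $N$.

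The remaining case, $G/N=1$, is where the real difficulty lies: $G=\overline{\langle G_v\rangle}$ is generated by its vertex stabilizers but need not fix a vertex, and in the pro-$p$ category this does not reduce to a finite computation. The strategy is to pass to a minimal $G$-invariant pro-$p$ subtree; if it is a point we are in case (a), and otherwise $G$ acquires a non-trivial amalgamated free pro-$p$ decomposition $G=A\amalg_C B$ (proper by Theorem \ref{Ribesproper} when $C$ is procyclic). A pro-$p$ ping-pong argument on $T$ should then produce a free non-abelian subgroup meeting every vertex stabilizer trivially, giving case (b), unless the decomposition is the degenerate $C_2\amalg C_2$, which is exactly the infinite dihedral pro-$2$ instance of case (c). I expect this last case to be the main obstacle: both the extraction of the amalgamated decomposition from the action and the ping-pong step must be carried out with the pro-$p$ tree machinery of \cite{horizons}, since elements of $G$ are not words in the generators and the classical combinatorial and Bass--Serre arguments are unavailable.
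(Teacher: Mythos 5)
First, a point of reference: the paper does not prove this statement at all --- it is quoted verbatim as background from Ribes and Zalesskii \cite[Thm.~3.18]{horizons} --- so what you have written is an attempt to reprove from scratch one of the deep structural results of pro-$p$ tree theory, and it must be judged against the genuine proof in \cite{horizons}. The attempt has a structural flaw: the asserted correspondence between the alternatives (a), (b), (c) and the rank of $G/N$, where $N=\overline{\langle G_v \rangle}_{v\in V(T)}$, is false, and this correspondence is the skeleton of your whole argument. Your rank $\geq 2$ case is correct (and can be shortened: $G_v\subseteq N$ for every $v$, so $P\cap G_v\subseteq P\cap N=1$ immediately), but it is the only case you prove. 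In the rank-one case your stated aim --- to exhibit an edge $e$ with $G_e=N$ --- is unachievable in general. Take $G$ to be the free pro-$p$ group on $\{x,t\}$ and $H=\overline{\langle x, x^t\rangle}$; since $[x,x^t]\neq 1$, the subgroup $H$ is free pro-$p$ of rank $2$ on $\{x,x^t\}$, so $G$ is a proper pro-$p$ HNN extension of $H$ with procyclic associated subgroups $\overline{\langle x\rangle}$ and $\overline{\langle x^t\rangle}$, acting on the associated standard pro-$p$ tree (cf. \cite{horizons}, \cite{ZM}) with vertex stabilizers the conjugates of $H$ and edge stabilizers the conjugates of $\overline{\langle x\rangle}$. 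Here $N$ is the normal closure of $H$, i.e. of $x$, so $G/N\cong\BZ_p$; but every edge stabilizer is procyclic while $N$ is non-abelian, so $G_e=N$ never holds, and worse, no edge stabilizer is normal in $G$ at all, since a non-abelian free pro-$p$ group has no non-trivial procyclic normal subgroup (a finitely generated normal subgroup is trivial or open, \cite[Prop.~8.6.13]{PavelRibesbook}). Thus this action satisfies alternative (b), which your rank-one analysis is structurally incapable of producing: the splitting of $G\to G/N\cong\BZ_p$ only yields a procyclic subgroup.

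Second, the case $G/N=1$, which you yourself identify as ``where the real difficulty lies,'' is essentially the entire content of the theorem, and your sketch for it leans on tools that do not exist in the pro-$p$ category. Passing to a minimal $G$-invariant subtree is fine (\cite[Lemma~3.11]{horizons}), but from an irreducible action on a pro-$p$ tree one cannot in general extract a decomposition $G=A\amalg_C B$: the quotient $T/G$ is in general a large profinite graph, not an edge, and the absence of such a structure theorem is exactly the ``not in full strength'' caveat about pro-$p$ Bass--Serre theory that the paper stresses in its introduction. Nothing forces edge stabilizers to be procyclic either, so Theorem \ref{Ribesproper} need not apply to make a putative decomposition proper. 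Finally, the ``pro-$p$ ping-pong'' that is supposed to produce a free non-abelian subgroup meeting every vertex stabilizer trivially is not an available black box: it is precisely the content of \cite[Lemma~3.16 and Thm.~3.18]{horizons}, i.e. of the statement being proved, so at this point the argument is circular. As it stands, the proposal settles the easy third of the theorem, mislabels the second third (where its strategy provably cannot succeed), and defers the last third to machinery that presupposes the result. The honest options are to cite the theorem, as the paper does, or to reproduce the actual argument of \cite{horizons}, which analyzes faithful irreducible actions directly rather than filtering through $G/N$.
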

The above theorem implies immediately the following result.
\begin{cor} \label{torsionaction} Let $G$ be a torsion pro-$p$ group acting
on a pro-$p$ tree. Then $G$ fixes a vertex.
\end{cor}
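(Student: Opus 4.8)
The plan is to read off the conclusion directly from the trichotomy of Theorem \ref{actions}. Since $G$ acts on a pro-$p$ tree, exactly one of the alternatives (a), (b), (c) of that theorem applies, and alternative (a) is precisely the assertion that $G$ is the stabilizer of a vertex, i.e. that $G$ fixes a vertex. So the whole task reduces to using the torsion hypothesis on $G$ to eliminate (b) and (c).

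First I would dispose of alternative (b). There $G$ would contain a free non-abelian pro-$p$ subgroup $P$. Such a $P$ is non-trivial and, being free pro-$p$, is torsion-free; in particular it contains elements of infinite order. But every subgroup of a torsion group is again torsion, so $G$ cannot contain $P$, and (b) is impossible. (The extra property $P \cap G_v = 1$ plays no role here; torsion-freeness of free pro-$p$ groups is all that is needed.)

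Next I would rule out alternative (c). Here $G_e$ is normal in $G$ and the quotient $G/G_e$ is isomorphic to $\BZ_p$ or to the infinite dihedral pro-$2$ group $C_2 \amalg C_2$. A quotient of a torsion group is again torsion, since $g^n = 1$ forces $(gG_e)^n = G_e$; hence $G/G_e$ must be torsion. This contradicts the isomorphism type: $\BZ_p$ is torsion-free and infinite, while $C_2 \amalg C_2$ contains a copy of $\BZ_2$ of index $2$, so each of the two listed groups has elements of infinite order and fails to be torsion. Thus (c) cannot occur either.

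With (b) and (c) excluded, Theorem \ref{actions} leaves only (a), which is exactly the desired conclusion that $G$ fixes a vertex. I do not expect any serious obstacle: given the trichotomy, the argument is essentially immediate, and the only point requiring (minor) care is the elementary observation that the infinite groups appearing in (b) and (c) contain elements of infinite order and therefore cannot arise inside, or as a quotient of, a torsion pro-$p$ group.
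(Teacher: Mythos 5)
Your proof is correct and follows exactly the route the paper intends: the paper states this corollary as an immediate consequence of Theorem \ref{actions}, and your elimination of alternatives (b) and (c) — a torsion group can contain no free non-abelian pro-$p$ subgroup, and its quotients cannot be $\BZ_p$ or $C_2 \amalg C_2$, both of which contain elements of infinite order — is precisely the implicit argument. Nothing is missing; the write-up just makes explicit what the authors left to the reader.
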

\begin{theorem}  \label{Kurosh1} \cite[Thm.~5.6]{melnikov} Let $G$ be a
pro-$p$ group acting on a second countable (as topological space)
pro-$p$ tree $T$ with trivial edge stabilizers. Then $G$ is a free
pro-$p$ product of some vertex stabilizers $G_v$ and a free
pro-$p$ group $F$.
\end{theorem}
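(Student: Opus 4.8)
The plan is to isolate a free pro-$p$ quotient, recognize the complementary normal part as a free pro-$p$ product of the vertex stabilizers, and then reassemble the two pieces into the asserted free pro-$p$ product. First I would set $N = \overline{\langle \{G_v\}_{v \in V(T)}\rangle}$. By Theorem \ref{freeaction} the quotient $F := G/N$ acts freely on the pro-$p$ tree $T/N$ and is a free pro-$p$ group. Because a free pro-$p$ group is projective, the short exact sequence $1 \to N \to G \to F \to 1$ splits, and I would fix a continuous section $\sigma\colon F \to G$, so that $G = N \rtimes \sigma(F)$ with $\sigma(F)$ free pro-$p$.

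Next I would analyse $N$ on its own. Since every $G_v \subseteq N$, restricting the action of $G$ to $N$ gives an action on the same pro-$p$ tree $T$ with vertex stabilizers $N_v = G_v$ and still trivial edge stabilizers, and by construction $N = \overline{\langle \{N_v\}_{v}\rangle}$. The goal here is to promote this generation statement to a genuine free pro-$p$ product decomposition $N = \coprod_{v \in R} G_v$ over a set $R$ of representatives of the $N$-orbits of vertices (a continuous, sheaf-indexed free product if there are infinitely many orbits). This is where I would use the exact sequence (\ref{singularcomplex}) of the pro-$p$ tree $T$, read as a sequence of $\BF_p[[N]]$-modules: triviality of edge stabilizers makes $\BF_p[[E^*(T),*]]$ a free $\BF_p[[N]]$-module, while $\BF_p[[V(T)]]$ is the corresponding permutation module assembled from the induced modules $\BF_p[[N]]\widehat{\otimes}_{\BF_p[[G_v]]}\BF_p$. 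The resulting resolution exhibits the augmentation module of $N$ as the direct sum of the augmentation modules of the factors $G_v$, which is precisely the homological signature of a free pro-$p$ product; together with the properness of trivial-amalgamation free products (a limiting case of Theorem \ref{Ribesproper}) this should yield the decomposition of $N$.

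Finally I would recombine the pieces. For a vertex $v$, an element $g$ fixes the $N$-orbit $Nv$ exactly when $g \in N$, so $F = G/N$ permutes the $N$-orbits lying over a single $G$-orbit freely and transitively. Hence within each $G$-orbit the factors of $N$ form an $F$-torsor, and the section $\sigma$ realizes $F$ as a free pro-$p$ complement that collapses these $F$-translates to a single free factor per $G$-orbit. Since $F$ is free and contributes no relations, one representative stabilizer per $G$-orbit of vertices together with a free basis of $F$ should freely generate $G$ as the free pro-$p$ product $\bigl(\coprod_{v} G_v\bigr) \amalg F$, using the standard identity that the normal closure of a factor $H$ in $H \amalg F$ is the free product of its $F$-conjugates.

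I expect the main obstacle to be the middle step: turning ``topologically generated by the vertex stabilizers'' into a free pro-$p$ \emph{product} with no hidden relations. In the abstract setting this is immediate from Bass--Serre theory via a spanning tree of $T/G$, but in the pro-$p$ world the quotient graph $T/G$ may be an infinite profinite graph, so one must produce a continuous transversal (a spanning pro-$p$ subtree) and control a free product indexed by a profinite space of orbits, verifying that it is proper. Second countability of $T$ is exactly what makes this tractable, allowing $T$ and $G$ to be approximated by inverse limits of finite trees and finite $p$-groups and the homological bookkeeping in (\ref{singularcomplex}) to be carried out over a countable system.
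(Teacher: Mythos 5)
The paper does not actually prove this statement: it is quoted from Melnikov (\cite[Thm.~5.6]{melnikov}), so what you are attempting is a reproof of that external theorem. Your outer scaffolding is sound. Setting $N = \overline{\langle \{G_v\}_{v \in V(T)}\rangle}$, invoking Theorem \ref{freeaction} to get that $F = G/N$ is free pro-$p$, splitting $1 \to N \to G \to F \to 1$ by projectivity of free pro-$p$ groups, and the observation that $G/N$ acts freely and transitively on the $N$-orbits inside a single $G$-orbit are all correct; and the final reassembly of $N \rtimes \sigma(F)$ into $\bigl(\coprod_{v} G_v\bigr) \amalg F$ can indeed be pushed through (the trivial-amalgamation case of Theorem \ref{strongKurosh} identifies the normal closure of a factor with the free product of its conjugates, and a five-lemma argument on the extension finishes it) --- \emph{granted} the middle step.

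But the middle step is a genuine gap, and not a technical one: it is the entire content of the theorem. What the sequence (\ref{singularcomplex}), read over $\BF_p[[N]]$, gives you is the easy direction: trivial edge stabilizers and second countability (which supplies continuous sections of $E(T) \to E(T)/N$, etc.) show that the augmentation ideal of $N$ decomposes as a direct sum of a free module and modules induced from the $G_v$. You then assert that this ``homological signature of a free pro-$p$ product \ldots should yield the decomposition of $N$.'' That converse --- from a decomposition of $I(N)$ into induced pieces to a group-theoretic free pro-$p$ product decomposition of $N$ --- is precisely Melnikov's homological criterion for free pro-$p$ products, the hard theorem at the core of the paper being cited. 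It relies on specifically pro-$p$ facts ($\BF_p[[N]]$ is local, so projective modules are free; a relative version of Serre's theorem that $cd \leq 1$ implies free), and it fails for general profinite groups, which is exactly why the statement is a pro-$p$ theorem. Asserting it as a known ``signature'' makes your argument circular: you have reduced the theorem to itself. A secondary misattribution: properness of sheaf-indexed free pro-$p$ products with trivial amalgamation is not a ``limiting case'' of Theorem \ref{Ribesproper}, which concerns procyclic amalgamations; it is a separate ingredient of the sheaf-product theory. So the proposal correctly locates the difficulty but does not overcome it.
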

\begin{theorem} \label{strongKurosh} \cite[Thm.~B]{Pavel1} Let $H = Q
\amalg_C A$ be a proper amalgamated free pro-$p$ product and $B$
be a normal subgroup of $A$ such that $B \cap C = 1$. Let $G =
\overline{\langle B^g \rangle}_{g \in H}$. Then $G$ is the free
pro-$p$ product  of groups $B^q$
where $q$ runs over a closed set of coset representatives for $C$ in $Q$.
\end{theorem}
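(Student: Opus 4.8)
The plan is to let $G$ act on the standard pro-$p$ tree $T$ attached to the proper decomposition $H = Q\amalg_C A$, where $V(T) = \{gQ\}_{g\in H}\cup\{gA\}_{g\in H}$ and $E(T) = \{gC\}_{g\in H}$, and to recover the free product decomposition of $G$ from the stabilizers of this restricted action by means of the pro-$p$ Kurosh theorem (Theorem \ref{Kurosh1}). Since $H$ is finitely generated in the situations of interest, $T$ is second countable, so Theorem \ref{Kurosh1} will apply as soon as the edge stabilizers are shown to be trivial.

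The first step is to pin down $G\cap A$ and $G\cap Q$. Passing to the normal closure $G = \overline{\langle B^g\rangle}_{g\in H}$ of $B$ induces a surjection $H \to Q\amalg_{\bar C}(A/B)$, where $\bar C$ is the image of $C$ in $A/B$; since $B\cap C = 1$ the subgroup $C$ still embeds, and by Theorem \ref{Ribesproper} the quotient amalgam is again proper. Hence its kernel is exactly $G$, and comparing with the factors gives $G\cap A = B$ (because $A\hookrightarrow H/G$ has kernel $B$) and $G\cap Q = 1$. I would then compute the $G$-stabilizers on $T$. For an edge $gC$ the stabilizer is $G\cap gCg^{-1} = g(G\cap C)g^{-1}$ by normality of $G$, and $G\cap C = (G\cap A)\cap C = B\cap C = 1$ as $C\subseteq A$; so all edge stabilizers are trivial. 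For vertices, $G\cap gQg^{-1} = g(G\cap Q)g^{-1} = 1$, while $G\cap gAg^{-1} = gBg^{-1}$ is a conjugate of $B$.

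With trivial edge stabilizers Theorem \ref{Kurosh1} presents $G$ as a free pro-$p$ product of a free pro-$p$ group $F$ together with one conjugate of $B$ for each $G$-orbit of $A$-vertices. The free factor $F$ must vanish: by construction $G = \overline{\langle B^g\rangle}_{g\in H}$ is topologically generated by the vertex stabilizers, so the retraction of the free product onto $F$ annihilates every $B^g$ and hence all of $G$, forcing $F = 1$. It then remains to parametrise the orbits. Using normality of $G$ one identifies the $G$-orbits of $A$-vertices with the double cosets $G\backslash H/A$, and through the retraction $H \to H/G$ these correspond to the cosets of the image of $A$ in $H/G$; choosing coset representatives $q$ of $C$ in $Q$ singles out one vertex $qA$ per orbit, with stabilizer $B^q$. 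Taking a closed transversal of $C$ in $Q$ (which exists because $C$ is closed in the pro-$p$ group $Q$) yields $G = \coprod_q B^q$.

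The main obstacle I anticipate is this last step: matching the abstract index set $G\backslash H/A$ of the Kurosh decomposition with a \emph{closed} set of coset representatives of $C$ in $Q$, and doing so continuously, so that the family $\{B^q\}$ is indexed by a profinite space and assembles into a genuine pro-$p$ free product. This is precisely where the structure of the amalgam must be exploited: one needs that modulo $B$ the vertex group $A$ collapses onto the image of $C$, so that the $A$-vertices of $T/G$ are controlled by $Q/C$ rather than by the larger coset space of $A/B$ in $H/G$. Checking second countability of $T$ and closedness of the transversal, though routine, is likewise required in order to invoke Theorem \ref{Kurosh1} legitimately and to keep every construction inside the pro-$p$ category.
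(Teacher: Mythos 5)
There is no internal proof to compare against here: the paper takes this statement verbatim from \cite[Thm.~B]{Pavel1} and only remarks on its provenance, so your tree-theoretic argument is the natural route to attempt, and its skeleton is the right one: restrict the $H$-action on the standard tree $T$ to $G$, identify $H/G$ with $Q\amalg_C(A/B)$, deduce that edge stabilizers are trivial and that $A$-vertices carry conjugates of $B$, apply Theorem \ref{Kurosh1}, and kill the free factor by the retraction argument (that step is correct). However, you import two hypotheses that the statement does not grant. Properness of the quotient amalgam $Q\amalg_C(A/B)$ is invoked via Theorem \ref{Ribesproper}, which requires a \emph{procyclic} amalgamated subgroup, whereas $C$ here is an arbitrary closed subgroup; since $G\cap A=B$, $G\cap Q=1$, and hence the triviality of all edge stabilizers are read off from the embeddings of $Q$ and $A/B$ into $H/G$, this properness is load-bearing, not cosmetic. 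Likewise Theorem \ref{Kurosh1} needs $T$ second countable, i.e.\ essentially $H$ finitely generated, which you flag but which is not assumed. Both restrictions happen to be harmless for the way the theorem is used in this paper (there $C\cong\BZ_p$ and all groups are finitely generated), but they mean the stated theorem is not what you proved.

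The decisive gap is the one you yourself call the main obstacle, and it cannot be closed from the given hypotheses. By normality of $G$, the $G$-orbits of $A$-vertices correspond to the cosets of the image of $A$ in $H/G\cong Q\amalg_C(A/B)$, that is, to $(Q\amalg_C(A/B))/(A/B)$; this coincides with $Q/C$ precisely when the image of $A$ equals the image of $C$, i.e.\ when $A=BC$. That equality is an extra hypothesis, not a consequence of $B\trianglelefteq A$ and $B\cap C=1$, and without it the conclusion as stated is actually false. Take $C=1$, $Q\cong\BZ_p$, and $A=B\times\overline{\langle c\rangle}$ with $B\cong\BZ_p$: then $\coprod_{q\in Q}B^q$ is contained in $K=\overline{\langle Q,B\rangle}\cong Q\amalg B$, and it is not normal in $H$, because for $q\neq 1$ the conjugate $c^{-1}B^{q}c$ does not lie in $K$ (if it did, the retraction of $H$ onto $K$ killing $c$ would fix it, forcing $c$ to centralize $B^{q}$, contradicting that free factors and their conjugates are self-centralized, \cite[Cor.~4.4]{horizons}, together with $A\cap A^{q}=1$). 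Hence $\coprod_{q\in Q}B^q$ is strictly smaller than the normal closure $G$. What your method honestly yields (for procyclic $C$ and finitely generated $H$) is the corrected general statement: $G=\coprod_h B^h$ with $h$ running over a closed set of representatives of the double cosets $G\backslash H/A$, which collapses to the form with index set $Q/C$ exactly when $A=BC$. This is consistent with how the theorem is actually used in the paper: in the application where $A=C\times B$ one does have $A=BC$, while the other applications only need the weaker fact that $G$ is a free pro-$p$ product of conjugates of $B$ (hence free pro-$p$ when $B$ is).
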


The above theorem was stated in \cite[Thm.~B]{Pavel1} using the
language of fundamental groups of  profinite graphs of profinite
groups, and it was observed that the statement  holds for
pro-${\mathcal C}$ groups, where ${\mathcal C}$ is a class of
finite groups  closed under subgroups, quotients and extensions.
Note that $H$ acts on the standard pro-$p$ tree $T$ associated
with the free amalgamated pro-$p$ product.
\begin{lemma}\label{component}
  Let $G$ be a profinite group acting on a profinite graph $S$ and let
$m_1, m_2$ be elements of a connected component
 $C$ of $S$. If $g\in G$ with $gm_1 =m_2$, then $g$ leaves $C$ invariant,
i.e. $g\in
 Stab_{G}(C).$ In other words the restriction of the
 factorization $S\longrightarrow S/G$ to $C$ coincides with the
 factorization $C\longrightarrow C/Stab_G(C)$.\end{lemma}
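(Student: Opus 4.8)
The plan is to use that every $g \in G$ acts as an \emph{automorphism} of the profinite graph $S$, and that such automorphisms permute the connected components. First I would observe that, since the action of $G$ on $S$ is continuous and commutes with $d_0$ and $d_1$, each $g \in G$ (together with its inverse $g^{-1}$) gives a continuous bijection of $S$ commuting with $d_0,d_1$; hence $g\colon S \to S$ is an isomorphism of profinite graphs, in particular preserving the partition of $S$ into vertices and edges. Consequently, for any profinite subgraph $D$, the element $g$ induces an isomorphism between the complex (\ref{singularcomplex}) formed for $D$ and the one formed for $gD$, compatibly with $\delta$ and $\epsilon$. Therefore $D$ is connected if and only if $gD$ is, and $g$ carries maximal connected subgraphs to maximal connected subgraphs.

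Applying this to the connected component $C$, the image $gC$ is again a connected component of $S$. By hypothesis $m_2 = g m_1 \in gC$, while also $m_2 \in C$; thus $gC$ and $C$ share the element $m_2$. Since distinct connected components of a profinite graph are pairwise disjoint, I conclude $gC = C$, that is $g \in \mathrm{Stab}_G(C)$. This establishes the first assertion.

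For the final sentence, the factorization $C \to C/\mathrm{Stab}_G(C)$ and the restriction to $C$ of $C \hookrightarrow S \to S/G$ have the same fibers precisely when two elements of $C$ lie in the same $G$-orbit if and only if they lie in the same $\mathrm{Stab}_G(C)$-orbit. The implication $\Leftarrow$ is clear since $\mathrm{Stab}_G(C) \le G$, and $\Rightarrow$ is exactly what was just proved: if $g m = m'$ with $m,m' \in C$ and $g \in G$, then $g \in \mathrm{Stab}_G(C)$. Hence the two equivalence relations on $C$ coincide, and the resulting continuous bijection between the two compact Hausdorff quotients is a homeomorphism, which is the claimed coincidence of factorizations.

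The only genuinely nontrivial input is that distinct connected components of a profinite graph are disjoint (equivalently, that the union of two connected subgraphs meeting in a common element is connected, so that by maximality two components sharing a point coincide). This is where the profinite theory replaces the elementary topological fact, and I would cite it from \cite{horizons} or \cite{PavelRibesbook} rather than reprove the underlying Mayer--Vietoris type argument. Everything else is formal once one records that the $G$-action is by isomorphisms of profinite graphs.
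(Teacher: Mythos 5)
Your proposal is correct and follows essentially the same route as the paper: the paper's entire proof is that $m_2\in C\cap gC$ forces $C\cup gC$ to be connected (citing \cite[Exercise~1.9~(i)]{horizons}), whence $C=gC$ by maximality, which is exactly the key fact you isolate (in the equivalent form that components sharing a point coincide) and likewise propose to cite rather than reprove. Your additional remarks, that $g$ acts as an isomorphism of profinite graphs and that the two factorizations on $C$ therefore have the same fibers, are just explicit elaborations of what the paper leaves implicit.
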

 \begin{proof} Since $m_2\in C\cap gC$, $C\cup gC$ is connected (cf.
\cite[Exercise~1.9~(i)]{horizons}),
  so $C=gC$.\end{proof}

\section{Definition of pro-$p$ analogues of limit groups \\ via extensions
of centralizers}

\label{defin} First we define recursively a class of pro-$p$
groups $\mathcal{G}_n$.  Denote by $\mathcal{G}_0$ the class of
all free pro-$p$ groups of finite rank. Define inductively the
class $\mathcal{G}_n$ of pro-$p$ groups $G_n$, where $G_n$ is a
free pro-$p$ amalgamated product $G_{n-1} \amalg_{C} A$, where
$G_{n-1}$ is any group from  the class $\mathcal{G}_{n-1}$, $C$ is
any self-centralized procyclic pro-$p$ subgroup of $G_{n-1}$, $A$
is any finite rank free abelian pro-$p$ group such that $C$ is a
direct summand of $A$.  In Lemma \ref{torsion} we will show that
every $G_n$ is torsion-free.

\medskip
{\bf Definition.} {\it The class of pro-$p$ groups $\mathcal{L}$
consists of all
 finitely generated pro-$p$ subgroups $H$ of some $G_n \in \mathcal{G}_n$
where $n \geq 0$. If $n$ is minimal one with the above properties
we say that $H$ has weight $n$.}

\medskip
{\bf Examples.} Note that all Demushkin pro-$p$ groups $H$ with
the invariant $q = \infty$ have pro-$p$ presentation $\langle x_1,
\ldots , x_d \mid [x_1,x_2] \ldots [x_{d-1}, x_d] \rangle$ where
$d$ is even. Write $H_d$ for the above group. If $d = 2$ then
$H_2$ is a free abelian pro-$p$ group of finite rank, so a pro-$p$
group of the class $\mathcal{L}$.

Suppose that $d$ is divisable by 4 and define $F_1$ as the free
pro-$p$ group with basis $x_1, \ldots , x_{d/2}$ and $F_2$ the
free pro-$p$ group with basis $x_{d/2 +1}, \ldots , x_d$. Let
$C_1$ be the procyclic subgroup of $F_1$ generated by $z_1 = [x_1,
x_2] \ldots [x_{d/2 -1}, x_{d/2}]$ and $C_2$ be the procyclic
subgroup of $F_2$ generated by $z_2 = [x_{d/2 +1}, x_{d/2 + 2}]
\ldots [x_{d -1}, x_{d}]$. Then $H \simeq F_1 \amalg_{C_1 \simeq
C_2} F_2$ where the isomorphism between $C_1$ and $C_2$ sends
$z_1$ to $z_2^{-1}$. Note that there is an isomorphism between
$F_1$ and $F_2$ that identifies $C_1$ and $C_2$ i.e. $x_1, x_2,
\ldots , x_{d/2}$ go to $x_d, x_{d-1}, \ldots , x_{d/2 + 1}$. Thus
$H \simeq F \amalg_C F$ where $F$ is a free pro-$p$ group of rank
$d/2$ and $C$ is a selfcentralized procyclic subgroup of $F$. Note
that $H \simeq F \amalg_C F$ embeds in $T = F \amalg_C A$, where
$A \simeq \BZ_p^2$ with $A / C \simeq \langle a \rangle$, since $F
\amalg_C F \simeq F \amalg_C F^a$ is the pro-$p$ subgroup of $T$
generated by $F$ and $F^a$. Thus $H$ is a pro-$p$ group of the
class $\mathcal{L}$.
\begin{lemma} \label{torsion} The groups $G_n \in \mathcal{G}_n$ are always
of finite cohomological dimension. In particular every pro-$p$
group from the class $\mathcal{L}$  has finite cohomological
dimension and so is torsion free.
\end{lemma}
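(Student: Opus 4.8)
The plan is to prove the statement about the $G_n$ by induction on $n$, establishing finite cohomological dimension; torsion-freeness and the claim about the class $\mathcal{L}$ will then follow formally. For the base case $n=0$, a free pro-$p$ group of finite rank has cohomological dimension $1$ (it is well known that $\mathrm{cd}_p F = 1$ for $F$ free pro-$p$), so we are done. For the inductive step, suppose $G_{n-1}$ has finite cohomological dimension, say $\mathrm{cd}_p(G_{n-1}) = c$, and consider $G_n = G_{n-1} \amalg_C A$. Here $A \simeq \BZ_p^m$ is free abelian of finite rank $m$, so $\mathrm{cd}_p(A) = m < \infty$, and the amalgamated subgroup $C \simeq \BZ_p$ has $\mathrm{cd}_p(C) = 1$.

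The key tool is the Mayer--Vietoris sequence in continuous cohomology associated to a proper amalgamated free pro-$p$ product. First I would invoke Theorem \ref{Ribesproper}: since the amalgamation $C$ is procyclic, the decomposition $G_n = G_{n-1}\amalg_C A$ is proper, so $G_{n-1}$, $A$ and $C$ all embed in $G_n$ and $G_n$ acts on the associated standard pro-$p$ tree $T$. Properness is exactly what guarantees that the short exact sequence of $\BF_p[[G_n]]$-modules coming from the tree $T$ (the analogue of (\ref{singularcomplex}) induced up to $G_n$) is exact, yielding a long exact Mayer--Vietoris sequence
\begin{equation*}
\cdots \longrightarrow H^{k}(G_n, M) \longrightarrow H^{k}(G_{n-1}, M) \oplus H^{k}(A, M) \longrightarrow H^{k}(C, M) \longrightarrow H^{k+1}(G_n, M) \longrightarrow \cdots
\end{equation*}
for every finite (discrete) $\BF_p[[G_n]]$-module $M$. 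Since the three groups $G_{n-1}$, $A$ and $C$ all have cohomological dimension bounded by $N := \max\{c, m\}$, the terms $H^{k}(G_{n-1}, M)$, $H^{k}(A, M)$ and $H^{k}(C, M)$ all vanish for $k > N$. Reading off the Mayer--Vietoris sequence then forces $H^{k}(G_n, M) = 0$ for $k > N+1$, so $\mathrm{cd}_p(G_n) \le N+1 < \infty$. This completes the induction.

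Finally I would deduce the two remaining assertions. If $H$ is any pro-$p$ group in $\mathcal{L}$, then by definition $H$ is a closed subgroup of some $G_n$, and cohomological dimension does not increase on passing to closed subgroups, so $\mathrm{cd}_p(H) \le \mathrm{cd}_p(G_n) < \infty$. Torsion-freeness is then immediate: a pro-$p$ group with a nontrivial element of finite order contains a copy of $\BZ/p$, whose cohomological dimension over $\BF_p$ is infinite, contradicting finiteness of $\mathrm{cd}_p(H)$; hence every pro-$p$ group in $\mathcal{L}$, and in particular each $G_n$, is torsion free.

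I expect the main obstacle to be the precise setup of the Mayer--Vietoris sequence in the pro-$p$ setting: one must be careful that properness of the amalgam (supplied by Theorem \ref{Ribesproper}) really does give the exactness of the chain complex of permutation modules associated to $T$, and that the connecting maps have the stated form. The purely homological bookkeeping after that is routine, but justifying the exact sequence itself is where the pro-$p$ theory of groups acting on trees must be applied with care.
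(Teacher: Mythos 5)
Your proposal is correct and follows essentially the same route as the paper: induction on $n$, properness of $G_{n-1}\amalg_C A$ supplied by Theorem \ref{Ribesproper}, and the resulting Mayer--Vietoris sequence to kill the cohomology of $G_n$ in high degrees. The only cosmetic difference is that the paper runs the sequence with $\BF_p$-coefficients alone and then cites \cite[Cor.~7.1.6]{PavelRibesbook} to conclude finite cohomological dimension, whereas you work with arbitrary finite coefficient modules and read off the explicit bound $\mathrm{cd}(G_n)\le\max\{\mathrm{cd}(G_{n-1}),\mathrm{cd}(A)\}+1$ directly, also spelling out the (standard) deductions for subgroups and torsion-freeness that the paper leaves implicit.
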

\begin{proof} We induct on $n$, the case $n = 0$ is obvious. Suppose $n \geq
1$. Since $G_{n-1}$, $A$ and $C$ all have finite cohomological
dimensions, we have that for sufficiently large $i$, $i \geq i_0$
say, all cohomology groups $H^i(G_{n-1}, \BF_p)$, $H^i(A, \BF_p)$
and $H^i(C, \BF_p)$ are zero. By Theorem \ref{Ribesproper} the
free pro-$p$ product $G_n = G_{n-1} \amalg_C A$ is proper and
hence there is corresponding long exact sequence in homology and
cohomology.  Then $H^{i+1}(G_n, \BF_p) = 0$ for $i \geq i_0$ and
so by \cite[Cor.~7.1.6]{PavelRibesbook} $G_n$ has finite
cohomological dimension.
\end{proof}
\section{Free-by-(torsion free poly-procyclic) pro-$p$ groups}

In this section we prove some properties of the pro-$p$ groups of
the class $\mathcal{L}$ that are known to hold for abstract limit
groups. The facts that abstract limit groups are
free-by-(torsion-free nilpotent) and the Euler characteristic is
non-positive were proved in \cite{desi}. The fact that abstract
limit groups are of type $FP_{\infty}$ follows directly from
Bass-Serre theory but this theory does not hold for pro-$p$ groups
in general. So we find an alternative way of proving that the
pro-$p$ groups of the class $\mathcal{L}$ are of type
$FP_{\infty}$.

The next lemma is valid for free pro-$p$ products of free abelian
pro-$p$ groups that can be expressed as an inverse limit of free
pro-$p$ products of finitely many free abelian pro-$p$ groups.
This is true for example for free pro-$p$ products $ M =
\coprod_{w \in W} M_w $ of pro-$p$ groups $M_w$ indexed locally
constantly by a profinite space $W$ introduced in \cite{GR} (see
Proposition 2.1 there). This means that $W$ is a finite union of
clopen subsets $W_\alpha$ such that $M_v=M_w$ for $v,w\in
W_\alpha$.

\begin{lemma} \label{centralseries} Let  $M$ be a locally constant free
pro-$p$ product
$$
M = \coprod_{w \in W} M_w
$$
where $W$ is a profinite space and  every $M_w$ is a finite rank
free abelian pro-$p$ group. Then the quotients of the lower
central series of $M$ are torsion-free.
\end{lemma}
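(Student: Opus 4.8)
The plan is to pass to the associated graded Lie algebra $\mathrm{gr}(M)=\bigoplus_{i\geq 1}\gamma_i(M)/\gamma_{i+1}(M)$ and prove it is torsion-free as a $\BZ_p$-module; since its degree-$i$ piece is exactly $\gamma_i(M)/\gamma_{i+1}(M)$, this is the lemma.

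First I would reduce to finitely many factors. As recalled just before the statement, a locally constant free product can be written as $M=\varprojlim_\lambda M^{(\lambda)}$ with each $M^{(\lambda)}$ a free pro-$p$ product of finitely many finite rank free abelian pro-$p$ groups. The canonical surjections $M\to M^{(\lambda)}$ carry $\gamma_i(M)$ onto $\gamma_i(M^{(\lambda)})$ and identify $\gamma_i(M)/\gamma_{i+1}(M)$ with $\varprojlim_\lambda \gamma_i(M^{(\lambda)})/\gamma_{i+1}(M^{(\lambda)})$. Because inverse limits are left exact, multiplication by $p$ stays injective on the limit, so an inverse limit of torsion-free $\BZ_p$-modules is torsion-free; it therefore suffices to treat a single finite free product $M=M_1\amalg\cdots\amalg M_k$.

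For the finite case I would work with the completed group algebra $\Lambda=\BZ_p[[M]]$ and its augmentation ideal $I$. The universal property of the free pro-$p$ product gives $\Lambda\cong\BZ_p[[M_1]]\amalg\cdots\amalg\BZ_p[[M_k]]$, the coproduct of augmented profinite $\BZ_p$-algebras. Each abelian factor satisfies $\BZ_p[[M_i]]\cong\BZ_p[[t_1,\ldots,t_{n_i}]]$, so its augmentation-ideal associated graded is the symmetric algebra $S(V_i)$ with $V_i=I_i/I_i^2\cong M_i$. Passage to the associated graded commutes with this coproduct, whence $\mathrm{gr}_I(\Lambda)\cong S(V_1)\amalg\cdots\amalg S(V_k)$. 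Expanding the coproduct as $\BZ_p\oplus\bigoplus \overline{S(V_{i_1})}\otimes\cdots\otimes\overline{S(V_{i_r})}$ over alternating words, where $\overline{S(V_i)}$ is the positive-degree part, exhibits $\mathrm{gr}_I(\Lambda)$ as a direct sum of tensor products of free $\BZ_p$-modules, hence as a free $\BZ_p$-module. The filtration map $g\mapsto g-1$ induces a graded Lie algebra homomorphism $\mathrm{gr}(M)\to\mathrm{gr}_I(\Lambda)$ whose image is the Lie $\BZ_p$-subalgebra generated by the degree-$1$ spaces $V_i$ (equivalently, the coproduct $V_1\amalg\cdots\amalg V_k$ of the abelian Lie algebras $V_i$). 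If this map is injective, then $\mathrm{gr}(M)$ is a $\BZ_p$-submodule of a free $\BZ_p$-module, hence torsion-free, and reading off degree $i$ finishes the proof.

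The main obstacle is precisely this injectivity, i.e. showing that the lower central series of $M$ coincides with the dimension-subgroup filtration $D_i=M\cap(1+I^i)$ and that no $p$-torsion is introduced in passing to $\mathrm{gr}_I(\Lambda)$. For a single free pro-$p$ factor this is the classical Magnus result and for a free abelian factor it is immediate, so the real work is to propagate the coincidence through the free product. I expect to do this by an integral Poincar\'e--Birkhoff--Witt comparison: one verifies that the coproduct Lie algebra $V_1\amalg\cdots\amalg V_k$ is $\BZ_p$-free with enveloping algebra $S(V_1)\amalg\cdots\amalg S(V_k)$, and then rules out torsion degree by degree by matching the $\BZ_p$-rank of $\gamma_i/\gamma_{i+1}$ against the $\BF_p$-dimension of its mod-$p$ reduction read off from the graded pieces of $\mathrm{gr}_I(\Lambda)$. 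Since integral dimension-subgroup phenomena are generally delicate, keeping the PBW argument valid over $\BZ_p$ (rather than only over $\mathbb{Q}_p$ or $\BF_p$) will be the careful point of the argument.
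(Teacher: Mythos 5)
Your framework coincides with the paper's: the reduction to finitely many factors by an inverse limit is exactly the paper's first step (and is sound --- a compactness argument shows $\gamma_{i+1}(M)=\bigcap_\lambda \gamma_{i+1}(M)\ker\pi_\lambda$, so torsion-freeness passes to the limit), and your coproduct $\Lambda=\BZ_p[[M_1]]\amalg\cdots\amalg\BZ_p[[M_k]]$ is literally the paper's ring $S=R/I$, namely the non-commutative power series ring $\BZ_p[[X]]$ modulo the relations making each factor's variables commute, with your graded pieces $I^i/I^{i+1}$ being the paper's free $\BZ_p$-modules $J^i/J^{i+1}$. But there is a genuine gap at exactly the point you yourself flag as ``the main obstacle'': the injectivity of $\mathrm{gr}(M)\to\mathrm{gr}_I(\Lambda)$, equivalently the comparison of the lower central series of $M$ with the dimension-subgroup filtration $D_i=M\cap(1+I^i)$. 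This comparison \emph{is} the content of the lemma --- once it holds, torsion-freeness is immediate from the freeness of $I^i/I^{i+1}$, as you observe --- and your proposal does not prove it; it only announces an ``integral PBW comparison'' to be carried out later. The paper closes precisely this step by quoting a theorem from the literature: the pro-$p$ Magnus embedding of Theorem 5.9a in \cite{RZ1} (in the spirit of Lazard \cite{lazard}), which identifies $M$ with the closed subgroup $T$ of $S$ generated by the elements $1+x$, $x\in X$, \emph{and} identifies $\gamma_i(M)/\gamma_{i+1}(M)$ with $\bigl((T-1)\cap J^{i}\bigr)/\bigl((T-1)\cap J^{i+1}\bigr)$. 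So what you have written is the paper's proof minus its key input.

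Moreover, the repair you sketch is doubtful as stated. Integrally, lower central series and dimension subgroups need not coincide (Rips's counterexample over $\BZ$), so some special feature of free products of free abelian pro-$p$ groups must enter; a generic PBW argument cannot suffice. Worse, the proposed degree-by-degree matching of the $\BZ_p$-rank of $\gamma_i(M)/\gamma_{i+1}(M)$ against the $\BF_p$-dimension of its mod-$p$ reduction ``read off from $\mathrm{gr}_I(\Lambda)$'' runs into the fact that the augmentation filtration of $\BF_p[[M]]$ induces the Jennings--Zassenhaus filtration on $M$, which involves $p$-th powers and is strictly finer than the lower central series; its graded pieces compute a restricted Lie algebra and do not directly give $\dim_{\BF_p}\bigl(\gamma_i/\gamma_{i+1}\otimes\BF_p\bigr)$. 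The honest fix is to invoke (or reprove) the Ribes--Zalesskii/Lazard embedding theorem that the paper uses; with that citation in place, your argument becomes essentially identical to the paper's.
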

\begin{proof}
It suffices to prove the lemma for $W$ finite and then take
inverse limit. For $W$ finite we use the pro-p version of Magnus
embedding proved by Lazard \cite{lazard}. Let $Y_w$ be a basis of
$M_w$ as a free abelian pro-$p$ group and $X_w$ be a set of the
same cardinality as $Y_w$. Set $X$ as the disjoint union $\cup_{w
\in W} X_w$ and consider the ring of non-commutative formal power
series $R =\BZ_p[[X]]$. The closed group generated by $Y = \{ 1 +
x \}_{x \in X}$ is a free pro-$p$ group with a basis $Y$.

Let $I$ be the closed two sided ideal of $R$  generated by $\{ x_i
x_j - x_j x_i \mid x_i, x_j \in X_w, w \in W \}$ and set $S = R /
I$. Then the closed group $T$ generated by the image of $Y$ in $S$
is isomorphic to $M$ (see Theorem 5.9a in \cite{RZ1}). Consider
the closed two-sided ideal $J$ of $S$ generated by the image of the
elements of $X$ in $S$. Then the $(i-1)$-th quotient  of the lower
central series of $M$
 is isomorphic to $((T-1) \cap J^{i-1}) / ((T-1) \cap J^i)$. Note that  $((T-1) \cap
J^{i-1}) / ((T-1) \cap J^i)$ embeds in the free $\BZ_p$-module
$J^{i-1} / J^{i}$, so $((T-1) \cap J^{i-1}) / ((T-1) \cap J^i)$ is
torsion-free.
\end{proof}
We do not know  whether every $G_n \in \mathcal{G}_n$ is
residually torsion-free nilpotent pro-$p$ though abstract limit
group are residually torsion-free nilpotent \cite{desi}. Still we
can prove that $G_n$ is residually torsion-free poly-procyclic.
\begin{theorem} \label{residually poly} Every  $G_n \in \mathcal{G}_n$ is an inverse limit of torsion-free poly-procyclic groups and so is residually torsion-free
poly-procyclic.
\end{theorem}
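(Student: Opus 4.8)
My plan is to induct on $n$. For $n=0$ the group $G_0$ is free pro-$p$ of finite rank, hence the free pro-$p$ product of finitely many copies of $\BZ_p$, i.e. a locally constant free pro-$p$ product of finite rank free abelian groups. By Lemma \ref{centralseries} the factors of its lower central series are torsion-free; since a nilpotent pro-$p$ group all of whose lower central factors are torsion-free is itself torsion-free (lift a would-be torsion element to the lowest term of the series containing it and project), each $G_0/\gamma_i(G_0)$ is a finitely generated torsion-free nilpotent, hence poly-procyclic with $\BZ_p$-factors. As $\bigcap_i\gamma_i(G_0)=1$, this exhibits $G_0$ as an inverse limit of torsion-free poly-procyclic groups.

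For the inductive step write $G_n=G_{n-1}\amalg_C A$ with $A=C\times B$. First I would split off the abelian direction: the identity on $G_{n-1}$ together with the projection $A\to C$ defines a retraction $\rho\colon G_n\to G_{n-1}$ whose kernel is the normal closure $R=\overline{\langle B^{G_n}\rangle}$, so that $G_n=R\rtimes G_{n-1}$. The amalgam is proper by Theorem \ref{Ribesproper}, so Theorem \ref{strongKurosh} applies and identifies $R$ with the free pro-$p$ product of the conjugates $B^q$, where $q$ runs over the profinite coset space $C\backslash G_{n-1}$, the action of $G_{n-1}$ being by right multiplication on this index set (and the identity on the constant stalk $B$). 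Thus $R$ is a locally constant free pro-$p$ product of copies of the finite rank free abelian group $B$, and Lemma \ref{centralseries}, together with the Magnus embedding used in its proof, shows that the lower central factors of $R$ are torsion-free and that $\bigcap_i\gamma_i(R)=1$.

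It then remains to separate an arbitrary $1\neq g\in G_n$ by a torsion-free poly-procyclic quotient. If $\rho(g)\neq 1$, I pull back through $\rho$ a torsion-free poly-procyclic quotient of $G_{n-1}$ supplied by the inductive hypothesis. The substantial case is $1\neq g\in R$, where I pick $i$ with $g\notin\gamma_i(R)$ and a normal $K\trianglelefteq G_{n-1}$ with $\bar Q:=G_{n-1}/K$ torsion-free poly-procyclic, and then choose an open subgroup $D$ of $G_{n-1}$ containing $\overline{\langle C,K\rangle}$. Because $K$ is normal and contained in $D$, it lies in the core of $D$, so $K$ acts trivially on the finite $G_{n-1}$-set $D\backslash G_{n-1}$; functoriality of free pro-$p$ products in the index space then yields a $G_n$-equivariant surjection of $R$ onto the finitely generated free pro-$p$ product $R'=\coprod_{D\backslash G_{n-1}}B$, on which $K$ acts trivially. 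Applying Lemma \ref{centralseries} to $R'$ gives a finitely generated torsion-free poly-procyclic quotient $R/U:=R'/\gamma_i(R')$ with $U$ invariant under $G_n$ and $[R,K]\subseteq U$. Setting $N=\overline{\langle U,K\rangle}$, the inclusion $[R,K]\subseteq U$ makes $N$ normal in $G_n$ and gives $G_n/N\cong (R/U)\rtimes \bar Q$; being an extension of the poly-procyclic $\bar Q$ by the poly-procyclic $R/U$ it is poly-procyclic, and being an extension of a torsion-free group by a torsion-free group it is torsion-free.

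The main obstacle is exactly this construction, forced by the fact that $R$ is an \emph{infinitely generated} free pro-$p$ product over the profinite space $C\backslash G_{n-1}$, so that its lower central quotients are infinitely generated and cannot be used directly. Two points make it work: one does not need the conjugation action to factor through a poly-procyclic group, since after collapsing the index set the action already takes place over the poly-procyclic $\bar Q$ and extensions of poly-procyclic by poly-procyclic are poly-procyclic; and finiteness of the index set is achieved by the $G_{n-1}$-equivariant collapse $C\backslash G_{n-1}\to D\backslash G_{n-1}$, where normality of $K\subseteq D$ is precisely what forces $K$ to act trivially. Finally, letting $i\to\infty$, $K\to 1$ and $D\downarrow \overline{\langle C,K\rangle}$ I would check that the resulting subgroups $N$ intersect trivially, so that $G_n$ is the inverse limit of the torsion-free poly-procyclic groups $G_n/N$; residual torsion-free poly-procyclicity is then immediate.
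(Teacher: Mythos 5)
Your proposal is correct, and although it runs on the same two engines as the paper's own proof --- the splitting $G_n=R\rtimes G_{n-1}$ with $R=\coprod_{C\backslash G_{n-1}}B$ supplied by Theorems \ref{Ribesproper} and \ref{strongKurosh}, and Lemma \ref{centralseries} applied to a finitely generated collapse of $R$ --- it is organized along a genuinely different route. The paper argues in two stages: it first shows that $G_n$ is an inverse limit of amalgams $Q_k\amalg_C A$, where the $Q_k$ are torsion-free poly-procyclic quotients of $G_{n-1}$ (this costs a second application of Theorem \ref{strongKurosh}, to the normal closures of the kernels $B_k\trianglelefteq G_{n-1}$, and tacitly requires that $B_k$ meet $C$ trivially so that $C$ embeds in $Q_k$); only then does it prove that $Q\amalg_C A$ with $Q$ torsion-free poly-procyclic is residually torsion-free poly-procyclic, by collapsing the index space of $\ker(Q\amalg_C A\to Q)$ to the finite coset spaces $Q/UC$ and invoking Lemma \ref{centralseries}. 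You merge the two stages: you collapse over open subgroups $D\le G_{n-1}$ and factor out $K\trianglelefteq G_{n-1}$ simultaneously, the glue being your observation that $K$, being normal and contained in $D$, lies in the core of $D$ and hence acts trivially on the collapsed product $R'$; this is exactly what makes $N=\overline{\langle U,K\rangle}$ normal with $G_n/N\cong(R'/\gamma_i(R'))\rtimes(G_{n-1}/K)$, an extension of torsion-free poly-procyclic by torsion-free poly-procyclic. Your route buys a single application of Theorem \ref{strongKurosh} and sidesteps the embedding issue for $C$ in the approximating quotients; its price is that the triviality of $\bigcap_D\ker(\pi_D)$, for the collapse maps $\pi_D:R\to R'_D$, must be verified directly. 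That verification needs the description of the locally constant product $R$ as the inverse limit of the $R'_D$ (Proposition 2.1 of \cite{GR}) together with the fact that the partitions of $C\backslash G_{n-1}$ into $D$-cosets, $D$ open containing $C$, are cofinal among all clopen partitions of $C\backslash G_{n-1}$ (given a clopen partition, take $V$ open normal in $G_{n-1}$ refining its preimage and set $D=CV$). This is exactly parallel to the paper's unproved assertion that $\bigcap_U\ker(\mu_U)=1$, so deferring it is fair, but it is the one check you must actually write out.

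Two minor repairs. First, separating $g\in R$ by fixing $i$ with $g\notin\gamma_i(R)$ and then choosing $D$ only works once you know the compactness fact $\bigcap_D\gamma_i(R)\ker(\pi_D)=\gamma_i(R)$; alternatively, choose $D$ and $i$ together using $\bigcap_{D,i}\pi_D^{-1}\bigl(\gamma_i(R'_D)\bigr)=\bigcap_D\ker(\pi_D)=1$, which follows from the check above plus $\bigcap_i\gamma_i(R'_D)=1$ for each finite collapse. Second, for the inverse-limit (rather than merely residual) conclusion you should remark that the kernels you produce may be assumed to form a directed family: if $G_n/N_1$ and $G_n/N_2$ are torsion-free poly-procyclic then so is $G_n/(N_1\cap N_2)$, being a closed subgroup of $G_n/N_1\times G_n/N_2$.
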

\begin{proof} We use induction on $n$. Note that $G_0$ is a free pro-$p$
group of finite rank, hence residually torsion-free nilpotent
pro-$p$.

Suppose now that $G_n = G_{n-1} \amalg_C A$, $C \simeq \BZ_p, A =
C \times B \simeq \BZ_p^m$ and $G_{n-1}$ is residually
torsion-free poly-procyclic.  Then the intersection of the kernels
of all maps $G_{n-1} \amalg_C A \to Q \amalg_C A$,whose
restriction to $G_{n-1}$ is a projection to a poly-procyclic
torsion-free group $Q$ and whose restriction to $B$ is the
identity map, is trivial. To see this let
$\psi:G_{n-1}\longrightarrow \prod_{i=1}^{\infty}P_i$ be an
embedding of $G_{n-1}$ in a direct product of poly-procyclic pro-$p$
groups. Denote by $\psi_k:G_{n-1}\longrightarrow
\prod_{i=1}^{k}P_i$ the composite of $\psi$ with the projection
$\prod_{i=1}^{\infty}P_i\longrightarrow \prod_{i=1}^{k}P_i$ and
put $\psi_k(G_{n-1})=Q_k$. Then $G_{n-1}$ is the inverse limit of
the groups $Q_k$ and  hence the  (descending) intersection
$\bigcap B_k$ (that can be viewed as the inverse limit) of kernels
$B_k$ of $\psi_k$ is trivial. The kernel $N_k$ of the map $G_{n-1}
\amalg_C A \to Q_k \amalg_C A$ is exactly $\overline{\langle B_k^g
  \rangle}_{g \in G_n}$ and so by Theorem
\ref{strongKurosh} it is the free pro-$p$ product of groups
$B_k^q$
 where $q$ runs over a closed set of coset representatives for $
 C$ in $A$.  But the
 inverse limit of the groups $B_k$ is trivial, hence so is the inverse limit
 of the groups $N_k$.

Thus it remains to show that a proper amalgamated product $H = Q
\amalg_{{C}} A$, where $Q$ is torsion-free poly-procyclic, the
image of $C$ in $Q$ is infinite (and so identified with $C \simeq
\BZ_p$) and $C$ is a  direct summand of $A$, is residually
torsion-free poly-procyclic.

Let $\theta : H = Q \amalg_C A \to Q$ be the homomorphism that is
the identity map on $Q$ and the projection of $A$ to $C$ with
kernel $B$. Then the kernel of $\theta$ is the normal closure of
$B = ker(\theta) \cap A$ in $H$. Since $ker(\theta) \cap Q^g =
(ker(\theta) \cap Q)^g = 1$ by Theorem \ref{strongKurosh} $ ker
(\theta)$ is a free pro-$p$ product  of $B^g$ over some closed
subset $S$ of representatives of the coset classes of $Q / C$. Let
$U$ be a normal open subgroup of $Q$ and $\mu_U$ be the
homomorphism  from $ker(\theta)$ to the free pro-$p$ product
$V_U=\coprod_{g \in Q / UC} B^g$  that identifies the copies of
$B^g $ when $g$ represent the same element in the image of $S$ in
$Q / U$. Then $\cap ker(\mu_U) = 1$ and it suffices to produce a
filtration of characteristic subgroups of $V_U$ with torsion-free
finitely generated abelian quotients. By Lemma \ref{centralseries}
the lower central series of $V_U$ has this property.
\end{proof}

\begin{prop}  \label{freenilpotent} Every pro-$p$ group from the class
$\mathcal{L}$ is free-by-(torsion free poly-procyclic).

\end{prop}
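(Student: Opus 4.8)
The plan is to reduce the statement to a claim about the groups $G_n$ themselves, and then settle that claim by induction on $n$, using the action of the amalgam on its standard pro-$p$ tree together with the pro-$p$ Kurosh theorem (Theorem \ref{Kurosh1}). The reduction is the following remark: if some $G_n\in\mathcal{G}_n$ admits a \emph{normal} free pro-$p$ subgroup $N$ with $G_n/N$ torsion-free poly-procyclic, then every closed subgroup $H\le G_n$ is free-by-(torsion-free poly-procyclic). Indeed $H\cap N$ is a closed subgroup of the free pro-$p$ group $N$, hence free pro-$p$, it is normal in $H$, and $H/(H\cap N)\cong HN/N$ embeds in $G_n/N$, so it is again torsion-free poly-procyclic. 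As every member of $\mathcal{L}$ is by definition a subgroup of some $G_n$, it therefore suffices to produce such an $N$ for each $G_n$; I would prove this by induction on $n$, the case $n=0$ being trivial since $G_0$ is free (take $N=G_0$, with trivial quotient).

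For the inductive step write $G_n=G_{n-1}\amalg_C A$ with $A=C\times B$, and assume $G_{n-1}$ has a free normal subgroup $N'$ with $G_{n-1}/N'$ torsion-free poly-procyclic. First I would manufacture a quotient $\psi\colon G_{n-1}\to Q$ onto a torsion-free poly-procyclic group with $\ker\psi$ free \emph{and} $\psi|_C$ injective: taking $\psi$ to be the product of $G_{n-1}\to G_{n-1}/N'$ with a torsion-free poly-procyclic quotient separating $C\cong\BZ_p$ (available from Theorem \ref{residually poly}), the kernel $\ker\psi$ is a closed subgroup of the free group $N'$, hence free, while $\psi|_C$ is injective because a surjection $\BZ_p\to\BZ_p$ is an isomorphism. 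Then define $\phi_n\colon G_n\to Q\times B$ by letting it agree with $\psi$ on $G_{n-1}$ and sending $A=C\times B$ into $\psi(C)\times B$ via $(c,b)\mapsto(\psi(c),b)$; these agree on $C$, so by the universal property of the amalgamated free pro-$p$ product $\phi_n$ is a well-defined homomorphism onto a torsion-free poly-procyclic group, and crucially $\phi_n|_A$ is injective. Now set $N:=\ker\phi_n$ and consider its action on the standard pro-$p$ tree $T$ of the (proper, by Theorem \ref{Ribesproper}) amalgam $G_n$, whose vertex stabilizers are the conjugates of $G_{n-1}$ and of $A$ and whose edge stabilizers are the conjugates of $C$. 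Since $\phi_n$ is injective on $A$ and $N$ is normal, $N\cap A^g=(N\cap A)^g=1$ and hence $N\cap C^g=1$ for every $g$, so \emph{all edge stabilizers of the $N$-action are trivial}; the $A$-vertices also have trivial $N$-stabilizer, while the $G_{n-1}$-vertices have stabilizers $(N\cap G_{n-1})^g=(\ker\psi)^g$, which are free. As $G_n$ is finitely generated, $T$ is second countable, so Theorem \ref{Kurosh1} exhibits $N$ as a free pro-$p$ product of conjugates of $\ker\psi$ and a free pro-$p$ group; a free pro-$p$ product of free pro-$p$ groups is free pro-$p$, so $N$ is free. Finally $G_n/N=\phi_n(G_n)\le Q\times B$ is torsion-free poly-procyclic, which closes the induction.

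The main difficulty, and where the argument really turns, is the freeness of $N$: it rests entirely on arranging \emph{trivial edge stabilizers} for the $N$-action, which is exactly what injectivity of $\phi_n$ on $A$ buys, so that Theorem \ref{Kurosh1} is applicable and the only surviving vertex groups are the free ones coming from $\ker\psi$. The bookkeeping obstacle is to guarantee that the amalgamating procyclic $C$ injects into the poly-procyclic quotient while keeping the kernel free; this is precisely the point handled above by multiplying the inductive quotient of $G_{n-1}$ by a separating quotient from Theorem \ref{residually poly} and observing that intersecting with an already free kernel preserves freeness. Throughout one must also verify that the quotients produced are genuinely torsion-free poly-procyclic, which I would ground on Lemma \ref{torsion} and on the torsion-freeness of the relevant central quotients as in Lemma \ref{centralseries}.
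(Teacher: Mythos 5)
Your proof is correct, and while it shares the paper's overall strategy --- reduce to $G_n$ itself, construct an epimorphism onto a torsion-free poly-procyclic group whose kernel meets every edge stabilizer of the standard pro-$p$ tree trivially, and then apply Theorem \ref{Kurosh1} --- the key construction is genuinely different, and tighter at exactly the delicate point. The paper never extends a quotient of $G_{n-1}$: it collapses a procyclic summand $D$ of the complement of $C$ in $A$ to get $\bar G_n=G_{n-1}\amalg_C\BZ_p^{m-1}$, runs a minimal-counterexample induction on the pair $(n,d(G_n))$ (the inductive hypothesis is applied to $\bar G_n$, which has fewer generators, not to $G_{n-1}$), and combines the resulting free-kernel quotient $\mu$ of $\bar G_n$ with a $C$-separating quotient $\varphi$ from Theorem \ref{residually poly} into $\eta:\bar G_n\to Q$; the final map is $\theta=\eta\circ\nu:G_n\to Q$, where $\nu$ collapses $D$. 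The cost of that route is that $\theta$ is not injective on $A$: writing $A=C\times B\times D$, its kernel $K$ satisfies $K\cap A=(\ker\eta\cap(C\times B))\times D$, so the Kurosh decomposition of $K$ contains the factors $K\cap A^g$, and the paper must assert these are procyclic --- which really requires $\ker\eta$ to meet $C\times B$ trivially, i.e.\ one should arrange $\varphi$ to be injective on all of $C\times B$, not merely on $C$ as stated there. Your construction sidesteps this entirely: extending $\psi$ over the amalgam to $\phi_n:G_n\to Q\times B$ via $(c,b)\mapsto(\psi(c),b)$ makes the quotient injective on all of $A$, so $N\cap A^g=1$, every edge stabilizer is trivial, and the only surviving Kurosh factors are conjugates of the free group $\ker\psi$ together with a free factor; in addition, your induction is a plain induction on $n$ with the hypothesis applied to $G_{n-1}$, so no minimality bookkeeping in $d(G_n)$ is needed. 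Two minor remarks: both arguments ultimately rest on the standard fact that a free pro-$p$ product of free pro-$p$ groups (over a profinite index space, as Theorem \ref{Kurosh1} produces) is free pro-$p$, e.g.\ via $cd\le 1$, so you are assuming nothing the paper does not; and your closing appeal to Lemma \ref{torsion} and Lemma \ref{centralseries} is misplaced --- what you actually need is only that the class of torsion-free poly-procyclic pro-$p$ groups is closed under closed subgroups and finite direct products, which is elementary and is exactly what makes $Q=\psi(G_{n-1})\le (G_{n-1}/N')\times P_1$ and $Q\times B$ legitimate quotients.
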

\begin{proof}
It suffices to prove the proposition for $G_n \in \mathcal{G}_n$,
 by induction on $n$. The case when $n = 0$ is obvious, so
we assume that $ n\neq 0$. Suppose the statement is true for any group $\in \mathcal{G}_{n-1}$. Suppose there is a counterexample $\in \mathcal{G}_n$  and let
$G_n= G_{n-1} \amalg _C A$ be one with the minimal number of generators $d(G_n)$. Let $T$ be a complement of $C \simeq \BZ_p$ in $A \simeq \BZ_p^m$, so
 $T \simeq \BZ_p^{m-1}$. Let $D \simeq \BZ_p$ be a direct summand of $T$, i.e. $T=B \times D$ (with $B$ possibly the trivial group) and $\bar{G}_n$ be the quotient of $G_n$ by the normal closure of
 $D$. Then $\bar{G}_n \simeq G_{n-1} \amalg_C \BZ_p^{m-1}$. By
Theorem \ref{residually poly}
$\bar{G}_n=\lim\limits_{\displaystyle\longleftarrow} Q_i$ is an
inverse limit of torsion-free poly-procyclic pro-$p$ groups. Thus
for certain $i$ the restriction to $C$ of the projection
$G_n\longrightarrow Q_i$ is injective. Denote by $\varphi $ the
canonical map $\bar{G}_n \to Q_i$. By the induction hypothesis and
the minimality assumption on $d(G_n)$ there is an epimorphism $\mu
: \bar{G}_n\longrightarrow P$ to a poly-procyclic group $P$ with
the free kernel. Thus putting $Q$ to be the quotient of
$\bar{G}_{n}$ modulo $Ker(\mu) \cap Ker (\varphi)$ we have an
epimorphism $\eta:\bar{G}_n \to Q$ with free kernel whose
restriction to $C$ is injective.

 Let $\theta : G_n \to \bar G_{n}\to Q$ be the composition  of $\eta$ and the natural epimorphism $\nu:G_n \to \bar G_n$ and let $K$ be its kernel. Since $K \cap C = 1$, by
Theorem \ref{Kurosh1}  $K$ is the free pro-$p$
product of some conjugates of $K \cap G_{n-1}$, $K\cap A$ and a free pro-$p$ group $F$. But the restriction of $\nu$ to $G_{n-1}\amalg_C B$ is injective and so  $K\cap (G_{n-1}\amalg_C B)$ is naturally isomorphic to the kernel of $\eta$, which is free pro-$p$. So the kernel of $\theta_{|G_{n-1}}$ is free since it is a subgroup of $K\cap (G_{n-1}\amalg_C B)$. Furthermore, the intersection $K\cap A$ is at most procyclic. Thus $K$ is free pro-$p$.
\end{proof}
\begin{cor} \label{infinity} Every pro-$p$ group $G$ from the class
$\mathcal{L}$ is of type $FP_{\infty}$. In particular, $G$ is finitely presented.
\end{cor}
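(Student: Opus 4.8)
The plan is to reduce the statement, via the structure theorem just proved, to a finiteness assertion about homology with $\BF_p$-coefficients. By Proposition \ref{freenilpotent} there is a short exact sequence of pro-$p$ groups
$$1 \longrightarrow F \longrightarrow G \longrightarrow P \longrightarrow 1,$$
with $F$ free pro-$p$ and $P$ torsion-free poly-procyclic. I would use the standard fact that a pro-$p$ group is of type $FP_{\infty}$ if and only if $H_n(-,\BF_p)$ is finite-dimensional for every $n$ (immediate from the minimal free resolution of $\BF_p$ over the local ring $\BF_p[[-]]$, cf. \cite{PavelRibesbook}). So it suffices to prove that $\dim_{\BF_p} H_n(G,\BF_p) < \infty$ for all $n$. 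The ``in particular'' clause then drops out, since a finitely generated pro-$p$ group with $\dim_{\BF_p} H_2(G,\BF_p)<\infty$ is finitely presented.

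First I would feed the extension into the Lyndon--Hochschild--Serre spectral sequence $E^2_{s,t} = H_s(P, H_t(F,\BF_p)) \Rightarrow H_{s+t}(G,\BF_p)$. Because $F$ is free pro-$p$ it has cohomological dimension one, so $H_t(F,\BF_p)=0$ for $t\ge 2$ and the spectral sequence has only the two rows $t=0,1$. Writing $M = H_1(F,\BF_p)$, viewed as a profinite $\BF_p[[P]]$-module, the sequence degenerates into a long exact sequence linking $H_n(G,\BF_p)$ with $H_n(P,\BF_p)$ and $H_{n-1}(P,M)$; more precisely $H_n(G,\BF_p)$ is an extension of a subspace of $H_n(P,\BF_p)$ by a quotient of $H_{n-1}(P,M)$. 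Thus it is enough to show that $H_s(P,\BF_p)$ and $H_s(P,M)$ are finite-dimensional for all $s$.

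The hard part is controlling $M$, which may have infinite rank since the free kernel $F$ need not be finitely generated; I therefore cannot simply invoke that $F$ is of type $FP_\infty$. Instead I would show $M$ is finitely generated over $\Lambda := \BF_p[[P]]$. The ring $\Lambda$ is local with maximal ideal the augmentation ideal $\omega$, and $M/\omega M = H_0(P,M)$. The five-term exact sequence
$$H_2(P,\BF_p) \longrightarrow H_0(P,M) \longrightarrow H_1(G,\BF_p) \longrightarrow H_1(P,\BF_p)\longrightarrow 0$$
exhibits $H_0(P,M)$ as an extension of a subspace of $H_1(G,\BF_p)$ by a quotient of $H_2(P,\BF_p)$. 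Since $G$ is finitely generated $H_1(G,\BF_p)$ is finite-dimensional, and since $P$ is poly-procyclic it is of type $FP_\infty$, so $H_2(P,\BF_p)$ is finite-dimensional; hence $M/\omega M$ is finite-dimensional. By the topological Nakayama lemma $M$ is a finitely generated $\Lambda$-module.

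To finish I would use that $P$ is $p$-adic analytic (being torsion-free poly-procyclic), so $\Lambda = \BF_p[[P]]$ is Noetherian. Both $\BF_p$ and $M$ are then finitely generated $\Lambda$-modules and admit resolutions by finitely generated free $\Lambda$-modules; applying $\BF_p \,\widehat{\otimes}_\Lambda -$ to a finite-type free resolution of $M$ (resp. of $\BF_p$) yields a complex of finite-dimensional $\BF_p$-spaces, whose homology $H_s(P,M)=\operatorname{Tor}^\Lambda_s(\BF_p,M)$ (resp. $H_s(P,\BF_p)$) is therefore finite-dimensional for every $s$. Feeding this back into the two-row spectral sequence shows each $H_n(G,\BF_p)$ is finite-dimensional, so $G$ is of type $FP_{\infty}$ and, being finitely generated, finitely presented.
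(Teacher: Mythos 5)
Your proof is correct, and it starts exactly where the paper does: with the short exact sequence $1 \to F \to G \to P \to 1$ furnished by Proposition \ref{freenilpotent}. The difference lies in how the finiteness is then extracted. The paper simply invokes Theorem A of \cite{King}, which for $P$ of finite rank says that $G$ is of type $FP_m$ if and only if $H_i(F,\BF_p)$ is finitely generated over $\BF_p[[P]]$ for all $i \le m$; since $F$ is free pro-$p$ only $i=0,1$ occur, and $i=1$ is handled by $G$ being of type $FP_1$. You instead reconstruct this machinery by hand: the five-term exact sequence plus topological Nakayama recovers finite generation of $M = H_1(F,\BF_p)$ over $\Lambda = \BF_p[[P]]$ from finite generation of $G$ (this is, in essence, the ``only if'' half of King's criterion in the case you need), and then Noetherianity of $\Lambda$ --- valid because a torsion-free poly-procyclic pro-$p$ group is of finite rank, hence $p$-adic analytic --- together with the two-row Lyndon--Hochschild--Serre spectral sequence yields finiteness of every $H_n(G,\BF_p)$, which for a pro-$p$ group is equivalent to type $FP_\infty$ via minimal resolutions over the local ring $\BZ_p[[G]]$. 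In effect you have reproved the special case of King's theorem that the paper uses as a black box. What your route buys is self-containedness, using only standard inputs (minimal resolutions, the homological LHS spectral sequence, Nakayama, and the Lazard/Dixon--du Sautoy--Mann--Segal Noetherianity theorem \cite{Marcusbook}), at the cost of length; the paper's citation compresses all of this into a few lines. You also make explicit the deduction of finite presentability from finiteness of $H_1$ and $H_2$, which the paper leaves implicit.
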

\begin{proof} Let $N$ be a closed normal subgroup of $G$ such that $Q = G /
N$ is a torsion-free poly-procyclic pro-$p$ group, in  particular is of
finite rank, and $N$ is a free pro-$p$ group. Then by
\cite[Thm. A]{King} $G$ is of type $FP_m$ if and only if
$H_i(N, \BF_p)$ is finitely generated as a $\BF_p[[Q]]$-module for
every $i \leq m$. Since $G$ is of type $FP_1$ (i.e. finitely
generated as a pro-$p$ group) $H_1(N, \BF_p)$ is finitely
generated as a $\BF_p[[Q]]$-module. Since $N$ is free pro-$p$ we
have that $H_i(N, \BF_p) = 0$ for $i \geq 2$. Thus $G$ is of type
$FP_m$ for every $m$.
\end{proof}
\begin{cor} Every non-trivial pro-$p$ group $G$ from the class $\mathcal{L}$
has infinite abelianization.
\end{cor}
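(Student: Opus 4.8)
The plan is to deduce the statement directly from the structural results already established, reducing to a computation of the first homology with $\BF_p$-coefficients. By Proposition \ref{freenilpotent}, a non-trivial pro-$p$ group $G \in \mathcal{L}$ fits into a short exact sequence $1 \to N \to G \to Q \to 1$ where $N$ is free pro-$p$ and $Q$ is torsion-free poly-procyclic. Since $G$ is a non-trivial pro-$p$ group, its Frattini quotient $G/\overline{[G,G]G^p} = H_1(G,\BF_p)$ is a non-zero $\BF_p$-vector space; I want to promote this to the assertion that the abelianization $G^{ab} = G/\overline{[G,G]}$ is infinite, i.e. that it is not a finite $p$-group.

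First I would observe that it suffices to show $\dim_{\BF_p} H_1(G, \BF_p) \geq 1$ together with the absence of a torsion-only obstruction; more precisely, since $G$ is finitely generated (by membership in $\mathcal{L}$, a finitely generated pro-$p$ group), $G^{ab}$ is a finitely generated abelian pro-$p$ group, hence isomorphic to $\BZ_p^r \times (\text{finite})$, and it is infinite precisely when $r \geq 1$. So the goal becomes: $G^{ab}$ has positive torsion-free rank. Here I would exploit that $Q$ is torsion-free poly-procyclic and non-trivial whenever $G$ is non-abelian-or-free in the appropriate sense; but the cleaner route is to use the five-term exact sequence in homology associated to $1 \to N \to G \to Q \to 1$, namely
\begin{equation*}
H_2(Q,\BF_p) \to (N^{ab})_{Q} \otimes \BF_p \to H_1(G,\BF_p) \to H_1(Q,\BF_p) \to 0,
\end{equation*}
and argue case by case on whether $Q$ is trivial or not.

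If $Q$ is non-trivial, then being torsion-free poly-procyclic it surjects onto $\BZ_p$, so $H_1(Q,\BF_p) \neq 0$ and in fact $Q^{ab}$ already has positive $\BZ_p$-rank; since $Q^{ab}$ is a quotient of $G^{ab}$, the latter is infinite. If instead $Q$ is trivial, then $G = N$ is itself free pro-$p$, and being non-trivial and finitely generated it has $G^{ab} \cong \BZ_p^{d}$ with $d = d(G) \geq 1$, again infinite. The main obstacle I anticipate is purely bookkeeping: ensuring the five-term sequence is available in the pro-$p$ (continuous) cohomology setting with the correct finiteness—this is standard once $G$ is of type $FP_\infty$ (Corollary \ref{infinity}), guaranteeing all the relevant homology modules are finitely generated—and ruling out the degenerate possibility that $G^{ab}$ is a non-trivial \emph{finite} $p$-group, which the case analysis above precisely excludes since in both cases a $\BZ_p$-quotient is produced. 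Thus I would present the argument as: reduce to positive $\BZ_p$-rank of $G^{ab}$, split on triviality of $Q$, and in each branch exhibit an explicit $\BZ_p$-quotient of $G^{ab}$.
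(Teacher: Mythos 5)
Your proposal is correct and takes essentially the same approach as the paper: both arguments reduce the statement to the fact that a non-trivial torsion-free poly-procyclic pro-$p$ group has infinite abelianization (your ``surjects onto $\BZ_p$'' claim, which the paper likewise asserts without proof), applied to a torsion-free poly-procyclic image of $G$ coming from Proposition \ref{freenilpotent}. The only difference is cosmetic: the paper invokes residual torsion-free poly-procyclicity (really Theorem \ref{residually poly}) to produce a non-trivial such quotient uniformly, whereas you work with the fixed quotient $Q=G/N$ and therefore need the extra, trivial case $Q=1$, where $G$ itself is free pro-$p$.
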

\begin{proof} By Proposition \ref{freenilpotent} $G$ is residually torsion-free poly-procyclic. Any torsion-free poly-procyclic group has infinite abelianization. The result follows.
\end{proof}

\section{Normalizers and centralizers} \label{sectioncentral}
\begin{theorem} \label{centralizers}  Let  $G$ be a pro-$p$ group from the
class $\mathcal{L}$. Then for any $g \in G \setminus \{ 1 \}$
$$N_{G}(\overline{ \langle g \rangle} ) = C_{G}(\overline{\langle g
\rangle}) \hbox{ is a free abelian pro-p group of
 finite rank.}$$
\end{theorem}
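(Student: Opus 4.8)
The plan is first to reduce to the case $G=G_n\in\mathcal G_n$ and then to induct on $n$. For any closed subgroup $H\le G_n$ one has $C_H(g)=C_{G_n}(g)\cap H$ and $N_H(\overline{\langle g\rangle})=N_{G_n}(\overline{\langle g\rangle})\cap H$, and a closed subgroup of a finite rank free abelian pro-$p$ group is again free abelian of finite rank; hence both conclusions (the equality $N=C$ and the structure of $C$) descend from $G_n$ to every $H\in\mathcal L$, so it suffices to treat $G_n$. The base case $n=0$ is the classical fact about free pro-$p$ groups: the centralizer of $g\ne1$ is procyclic, and a relation $hgh^{-1}=g^\lambda$ with $\lambda\ne1$ is incompatible with the freeness of $\overline{\langle g,h\rangle}$ (a subgroup of a free pro-$p$ group, hence free and of rank at most $2$, so procyclic here), whence $N=C\cong\BZ_p$.

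For the inductive step I write $G_n=G_{n-1}\amalg_C A$ and let it act on the standard pro-$p$ tree $T$, whose vertex stabilizers are the conjugates of $G_{n-1}$ and of $A$ and whose edge stabilizers are the conjugates of $C$. Fix $1\ne g\in G_n$ and put $M=C_{G_n}(g)$; since $g$ is central in $M$, the subgroup $M$ acts on $T$, and I apply the trichotomy of Theorem \ref{actions} to this action. If $M$ fixes a vertex then $M$ lies in a conjugate of $A$ (so $M$ is abelian) or in a conjugate $xG_{n-1}x^{-1}$, in which case $M=C_{xG_{n-1}x^{-1}}(g)$ and the inductive hypothesis finishes the job. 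Thus the whole difficulty is to exclude the two non-elliptic alternatives of Theorem \ref{actions}: that $M$ contains a free non-abelian pro-$p$ subgroup, and that $M$ has a normal edge stabilizer with solvable (virtually $\BZ_p$) quotient.

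A concrete and instructive sub-case is when $g$ is conjugate into $C$, which is where the centralizer genuinely enlarges. Here I intend to use the retraction $\theta\colon G_n\to G_{n-1}$ that is the identity on $G_{n-1}$ and the projection $A\to C$ with kernel $B$. Since $\theta(h)$ commutes with $\theta(g)=g$, one gets $\theta(M)\le C_{G_{n-1}}(g)=C$, so $M\le\theta^{-1}(C)=\ker\theta\rtimes C$; by Theorem \ref{strongKurosh} the group $\ker\theta$ is a free pro-$p$ product of the conjugates $B^q$, and the inductive hypothesis for $G_{n-1}$ shows $C$ is malnormal there, so conjugation by $g$ permutes these free factors freely apart from $B$ itself. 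A fixed-point computation on this free product then yields $C_{\ker\theta}(g)=B$ and hence $M=B\times C=A$, free abelian of finite rank. The genuinely hard part is the hyperbolic case, where $g$ fixes no vertex: here $M$ preserves the (unique minimal) invariant subtree, or axis, of $g$, and I must rule out free non-abelian subgroups and nontrivial conjugation on edge stabilizers by analysing the induced $M$-action on this pro-$p$ line together with Theorem \ref{actions}, Theorem \ref{Kurosh1}, and the torsion-freeness and finite cohomological dimension from Lemma \ref{torsion}. This is essentially the assertion that $\mathcal L$ is commutatively transitive, and I expect it to be the main obstacle, since the free-by-(poly-procyclic) structure of Proposition \ref{freenilpotent} alone does not forbid subgroups such as $F\times\BZ_p$.

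Once $M=C_{G_n}(g)$ is shown to be abelian it is torsion-free of finite cohomological dimension by Lemma \ref{torsion}, hence isomorphic to some $\BZ_p^k$, i.e. free abelian of finite rank. Finally, for $N_{G_n}(\overline{\langle g\rangle})=C_{G_n}(g)$ I note that any $h$ normalising $\overline{\langle g\rangle}$ also normalises $C_{G_n}(g)$, because $C_{G_n}(g^\lambda)=C_{G_n}(g)$ for a unit $\lambda$; therefore $E=\overline{\langle C_{G_n}(g),h\rangle}$ is a finitely generated metabelian, hence solvable, pro-$p$ group in $\mathcal L$. Appealing to the solvable-implies-abelian property of the class $\mathcal L$ established in this section, $E$ is abelian, which forces $hgh^{-1}=g$ and therefore $N=C$.
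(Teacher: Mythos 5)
Your reduction to $G_n$ and your base case are fine, and your framework for the inductive step (the standard pro-$p$ tree for $G_n=G_{n-1}\amalg_C A$ plus the trichotomy of Theorem \ref{actions}) is the same one the paper uses. But the proposal stops exactly where the proof has to happen: you apply Theorem \ref{actions} to the full centralizer $M=C_{G_n}(g)$ and then concede that the two non-elliptic alternatives --- $M$ containing a non-abelian free pro-$p$ subgroup, and $M$ having a normal edge stabilizer with $\BZ_p$ or infinite dihedral quotient --- remain to be excluded, calling this ``the main obstacle'' that you ``expect'' to overcome with the listed tools. That exclusion \emph{is} the theorem; no argument is given for it. The paper's key idea is precisely not to apply the trichotomy to the whole centralizer, but only ever to solvable subgroups, for which alternative (b) is impossible for free. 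First it proves $N=C$: taking $t\in N_{G_n}(\overline{\langle g\rangle})\setminus C_{G_n}(\overline{\langle g\rangle})$, the group $M=\langle t,g\rangle$ is solvable and non-abelian (since $g^t=g^\lambda$), alternative (a) contradicts the inductive hypothesis, and alternative (c) is killed by the normalizer formula $N_{G_n}(M_e)=N_{G_{n-1}}(M_e)\amalg_C N_A(M_e)=C\amalg_C A=A$ from \cite[Cor.~2.7]{RZ1} together with self-centralization of $C$. Only then does it treat $C_{G_n}(g)$ itself, via the minimal invariant subtree $T_1$ and the kernel $K$ of the action on it: either $g\in K$, an edge-stabilizer case resolved by the same normalizer formula, or $C_{G_n}(g)/K$ is free pro-$p$ with a non-trivial procyclic normal subgroup, hence procyclic, so $C_{G_n}(g)$ is procyclic-by-procyclic, i.e.\ again solvable, and the previous argument applies. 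Your proposal has no substitute for either of these reductions-to-solvable.

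There is also a circularity in your closing paragraph: to deduce $N=C$ you form $E=\overline{\langle C_{G_n}(g),h\rangle}$ and invoke ``the solvable-implies-abelian property of the class $\mathcal{L}$ established in this section.'' In the paper that property is Corollary \ref{soluble}, whose proof invokes Theorem \ref{centralizers}; as written, this step assumes what is being proved. It can be repaired only by running the tree argument directly on the solvable group $E$ --- which is exactly the paper's part II that your proposal lacks. Finally, the one sub-case you do work out ($g$ conjugate into $C$, via the retraction $\theta$ and Theorem \ref{strongKurosh}) is not the general case, and even there the ``fixed-point computation'' identifying $C_{\ker\theta}(g)=B$ inside the free pro-$p$ product $\coprod_q B^q$ is asserted rather than proved; in the pro-$p$ category such fixed-point statements need an actual tree or inverse-limit argument.
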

\begin{proof} Note that it is sufficient to consider the case
$G = G_n \in \mathcal{G}$.
 We  argue by
induction on $n$.

\medskip
I. Suppose first that $n = 0$. Note that every closed subgroup of
a free pro-$p$ group is free pro-$p$
\cite[Cor.~7.7.5]{PavelRibesbook}. Therefore $N =
N_{G_0}(\overline{\langle g \rangle})$ is free pro-$p$ and  by
\cite[Proposition~8.6.3]{PavelRibesbook} the rank of $N$ is not
bigger than the rank of $\overline{\langle g \rangle}$ so that $N
\simeq \BZ_p$, in particular $N$ is abelian.
 Thus $N \subseteq C_{G_0} (\overline{\langle g \rangle})$, hence $C_{G_0}
(\overline{\langle g \rangle}) = N_{G_0} (\overline{\langle g
\rangle}) \simeq \BZ_p$ as required.

\medskip
II. Suppose now that $n > 0$ and that the theorem holds for
$G_{n-1}$. Thus $G = G_{n-1} \amalg_{C} A$, where $C\cong \BZ_p$,
$A\cong \BZ_p^m$ and $C$ is a direct factor of $A$. Suppose first
that $N_{G_n}(\overline{\langle g \rangle}) \not=
C_{G_n}(\overline{\langle g \rangle}) $ and choose $t \in
N_{G_n}(\overline{\langle g \rangle})  \setminus
C_{G_n}(\overline{\langle g \rangle})$. Thus $g^t = g^{\lambda}$
for some $\lambda \in \BZ_p \setminus (1 \cup p \BZ_p)$. Put
$M=\langle t,g\rangle$ and observe that it is solvable but not
abelian.

Since $G_n$ splits as a proper free pro-$p$  product with
amalgamation, $G_n$ acts on the canonical pro-$p$ tree from
Section \ref{preliminaries} and $M$ acts on the same pro-$p$ tree
by restriction. By Theorem \ref{actions} either $M = M_v$ i.e. $M$
is contained in the stabilizer $G_v$ of a vertex or there is a
stabilizer of an edge $M_e$ such that $M_e$ is a normal subgroup
of $M$ and either $M / M_e \simeq \BZ_p$ or $M / M_e \simeq C_2
\amalg C_2$ (in this case $p = 2$). If $M=M_v$ we may assume that
$M$ is a subgroup of $G_{n-1}$ or $\BZ_p^m$ and so by induction
hypothesis $M$  is abelian, a contradiction.

Let us consider the latter case. By conjugating $M_e$ if necessary
we may assume that $M_e$ is in $C$. Since $M_e$ is cyclic, by
\cite[Corollary~2.7]{RZ1}
\begin{equation} \label{edge1}
N_G(M_e)=N_{G_{n-1}}(M_e)\amalg_{C} N_A(M_e)=C\amalg_{C} A=A
\end{equation}
 and so  $M\leq N_G(M_e)=A$ is abelian, a contradiction with $M$ solvable
but not abelian. Thus we have proved that
$$N_{G_n}(\overline{\langle g \rangle}) =
C_{G_n}(\overline{\langle g \rangle}). $$

\medskip
 III. Finally it remains to show that $C_{G_n}(g)$
is a finitely generated abelian pro-$p$ group. By
\cite[Lemma~3.11]{horizons} there is a minimal pro-$p$ subtree
$T_1$ on which $C_{G_n}(g)$ acts. If $T_1$ is just one vertex,
$C_{G_n}(g)$ is conjugate to a subgroup of either $G_{n-1}$ or $A$
and so we deduce the result from the induction hypothesis.

Suppose $|T_1|>1$. Denote by $K$ the kernel of the action of
$C_{G_n}(g)$ on $T_1$. Then $K$ is a subgroup of every edge
stabilizer of the action of $C_{G_n}(g)$ on $T_1$, but any edge
stabilizer is procyclic. In particular $K$ is procyclic.

 There are two cases. First if $g \in K$ then
$g$ is in the stabilizer  $G_e$ of an edge and this case can  be
resolved  by applying (\ref{edge1}). If $g \not\in K$ note that
$C_{G_n}(g) /K$ acts on $T_1/K$ faithfully irreducibly and so by
\cite[Lemma~3.16~(b)]{horizons} $C_{G_n}(g) /K$ is free pro-$p$
containing a procyclic non-trivial  normal subgroup $\langle
g\rangle K/K$, hence procyclic (cf.
\cite[Proposition~8.6.3]{PavelRibesbook}). Then $C_{G_n}(g)$ is
procyclic-by-procyclic, which corresponds exactly to the case
treated in II; indeed, in II it was shown that the
procyclic-by-procyclic group $M$ is always abelian.
\end{proof}
\begin{cor} \label{transitivecom} The group $G_n$ is transitive commutative
i.e. if $[g,h] = 1 = [h,t]$ for some non-trivial elements $g,h,t$
of $G_n$ then $[g,t]=1$.
\end{cor}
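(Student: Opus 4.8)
The plan is to reduce the statement to the abelianness of a single centralizer, which is precisely what Theorem \ref{centralizers} delivers. The key preliminary observation is that for any non-trivial $h \in G_n$ the centralizer of the element $h$ coincides with the centralizer of the closed procyclic subgroup it generates. Indeed, an element of $G_n$ commutes with $h$ if and only if it commutes with every element of the abstract subgroup $\langle h \rangle$, and since the commutator map is continuous, the set of elements commuting with all of $\langle h \rangle$ is closed and hence equals the set of elements commuting with the closure $\overline{\langle h \rangle}$. Thus $C_{G_n}(h) = C_{G_n}(\overline{\langle h \rangle})$.

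With this identification in hand, the argument is immediate. Given the hypotheses $[g,h] = 1$ and $[h,t] = 1$ with $h \neq 1$, both $g$ and $t$ lie in $C_{G_n}(h) = C_{G_n}(\overline{\langle h \rangle})$. By Theorem \ref{centralizers} this centralizer is a free abelian pro-$p$ group of finite rank, in particular abelian. Since $g$ and $t$ are two elements of an abelian group, they commute, i.e. $[g,t] = 1$, which is exactly what is required.

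The only place calling for a moment's care is the equality $C_{G_n}(h) = C_{G_n}(\overline{\langle h \rangle})$, and even there the verification is routine once one invokes the continuity of the group operations; everything else is a direct application of the theorem just proved. I therefore expect no genuine obstacle: the corollary is essentially a reformulation of the fact, established in Theorem \ref{centralizers}, that the centralizer of any non-trivial element of $G_n$ is abelian. The non-triviality of $h$ is the hypothesis that lets us apply the theorem, while the non-triviality of $g$ and $t$ is not even needed for the conclusion.
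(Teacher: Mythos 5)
Your proposal is correct and follows exactly the paper's argument: the paper's proof is the one-line observation that $g,t \in C_{G_n}(h)$, which is abelian by Theorem \ref{centralizers}. The only difference is that you spell out the routine identification $C_{G_n}(h) = C_{G_n}(\overline{\langle h \rangle})$ via continuity of commutation, which the paper leaves implicit.
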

\begin{proof} Note that $g, t \in C_{G_n}(h)$ and $C_{G_n}(h)$ is abelian by
Theorem \ref{centralizers}.
\end{proof}
\begin{cor} \label{centerless} If $G$ is a non-abelian pro-$p$ group from
the class $\mathcal{L}$ then its center is trivial.
\end{cor}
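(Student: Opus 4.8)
The plan is to argue by contradiction, and the key point is that Theorem \ref{centralizers} already supplies almost everything: it asserts that the centralizer of \emph{any} nontrivial closed procyclic subgroup of $G$ is abelian (indeed free abelian pro-$p$ of finite rank). So the only work is to recognize that a nontrivial central element produces a closed procyclic subgroup whose centralizer is the whole group.

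Concretely, I would suppose that the center $Z(G)$ is nontrivial and pick some $z \in Z(G)\setminus\{1\}$. Consider the closed procyclic subgroup $\overline{\langle z\rangle}$ generated by $z$. Since $z$ is central, every element of $G$ commutes with $z$; because the group operations are continuous and $\overline{\langle z\rangle}$ is the closure of the abstract subgroup generated by $z$, it follows that every element of $G$ in fact commutes with all of $\overline{\langle z\rangle}$. Hence $C_{G}(\overline{\langle z\rangle}) = G$.

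Now I would apply Theorem \ref{centralizers} with $g = z$, which gives that $C_{G}(\overline{\langle z\rangle})$ is abelian. Combined with the previous paragraph this yields that $G = C_{G}(\overline{\langle z\rangle})$ is abelian, contradicting the standing hypothesis that $G$ is non-abelian. Therefore $Z(G) = \{1\}$, as claimed. I do not expect any genuine obstacle here; the single point requiring a moment's care is the passage from ``$z$ is central'' to $C_{G}(\overline{\langle z\rangle}) = G$, i.e. checking that centralizing the single element $z$ is the same as centralizing the closed subgroup $\overline{\langle z\rangle}$ it generates, which is immediate from the continuity of multiplication. Everything else is a direct invocation of Theorem \ref{centralizers}.
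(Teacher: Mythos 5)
Your proof is correct and essentially the same as the paper's: the paper routes through Corollary \ref{transitivecom} (transitive commutativity), but that corollary is itself just Theorem \ref{centralizers} applied to $C_G(h)$, so both arguments amount to observing that a nontrivial central element $z$ forces $G = C_G(\overline{\langle z\rangle})$, which is abelian by Theorem \ref{centralizers}.
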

\begin{proof} If $g_1, g_2 \in G$ and $h \in Z(G) \setminus \{ 1 \}$ then
$[h, g_1] = [h, g_2] = 1$ and by transitive commutativity $[g_1,
g_2] = 1$ i.e. $G$ is abelian, a contradiction.
\end{proof}
 \begin{cor} \label{virtualabelian} Every virtually abelian pro-$p$ group
$G$ from the class $\mathcal{L}$ is abelian.
 \end{cor}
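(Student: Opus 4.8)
The plan is to exploit transitive commutativity together with torsion freeness (Lemma \ref{torsion}). Note first that $G$, being a finitely generated pro-$p$ subgroup of some $G_n$, inherits transitive commutativity from Corollary \ref{transitivecom}, since the relation $[\,\cdot\,,\cdot\,]=1$ restricts to subgroups. We may assume $G$ is nontrivial. Being virtually abelian, $G$ contains an open, hence finite index, abelian subgroup $A$; if $A=1$ then $G$ is finite, and a finite torsion free pro-$p$ group is trivial, so we may take $A\neq 1$ and fix some $1\neq g\in A$. Every element of $A$ commutes with $g$, so $A\subseteq C_G(g)$, and therefore $C:=C_G(g)=C_G(\overline{\langle g\rangle})$ has finite index in $G$. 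By Theorem \ref{centralizers} this $C$ is free abelian of finite rank, in particular abelian.

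The heart of the argument is to show $G=C$. I would take an arbitrary $x\in G$ with $x\neq 1$ and suppose, for contradiction, that $[x,g]\neq 1$. First I claim $C_G(x)\cap C=1$: if some $1\neq z$ lay in this intersection, then $[z,x]=1$ and $[z,g]=1$ with $x,g,z$ all nontrivial, so transitive commutativity would force $[x,g]=1$, contrary to assumption. On the other hand $C_G(x)$ contains $\overline{\langle x\rangle}$, which is a nontrivial procyclic subgroup of the torsion free pro-$p$ group $G$ and hence isomorphic to $\BZ_p$; in particular $C_G(x)$ is infinite. But the map $C_G(x)\to G/C$ sending $c\mapsto cC$ into the finite coset space $G/C$ is injective precisely because $C_G(x)\cap C=1$, which is absurd. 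Hence no such $x$ exists, i.e. $[x,g]=1$ for every $x\in G$, so $G=C_G(g)=C$ is abelian.

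I do not expect a serious obstacle here: the only points needing care are the reduction to a single element $g$ whose centralizer is open (handled by taking $g$ inside the open abelian subgroup and passing to $C_G(g)$) and the clean combination of transitive commutativity with torsion freeness. Once these are set up, the contrast between the finite index of $C$ and the infinitude of an arbitrary centralizer closes the argument immediately. It is worth remarking that this also recovers Corollary \ref{centerless} in spirit, and one could alternatively phrase the conclusion as saying that the distinct maximal abelian subgroups of $G$ meet trivially, so that an open one must be all of $G$.
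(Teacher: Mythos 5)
Your proof is correct, and it rests on the same two pillars as the paper's own argument---transitive commutativity (Corollary \ref{transitivecom}) and torsion-freeness (Lemma \ref{torsion})---but the scaffolding is genuinely different. The paper's proof is a three-line maximality argument: take a maximal abelian subgroup $H$ of $G$ (chosen to contain the finite-index abelian subgroup, hence itself of finite index), pick $g \in G \setminus H$, observe that some power $g^m$ lies in $H \setminus \{1\}$ by finite index plus torsion-freeness, and let $g^m$ serve as the bridge: $[g,g^m]=1=[g^m,H]$ forces $[g,H]=1$, contradicting maximality of $H$. You instead fix $1 \neq g$ inside the finite-index abelian subgroup, pass to $C=C_G(g)$, and derive a contradiction from the injection of the infinite group $C_G(x)$ into the finite coset space $G/C$ whenever $[x,g]\neq 1$. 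Your counting step is in effect the contrapositive of the paper's power trick: $\overline{\langle x\rangle}\cong\BZ_p$ being infinite while $[G:C]<\infty$ forces a nontrivial power $x^m\in C$, which is precisely the bridge element that transitive commutativity needs. What your version buys: it avoids selecting a maximal abelian subgroup (no Zorn-type argument among closed abelian subgroups is needed), and it makes the finite-index pigeonhole explicit. What it costs: you invoke Theorem \ref{centralizers} to conclude that $C$ is abelian, which is heavier machinery than required---abelianness of $C_G(g)$ already follows from transitive commutativity alone, exactly as in Corollary \ref{transitivecom}---so your argument can be streamlined to match the paper's economy while keeping your centralizer-based organization.
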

 \begin{proof} Let $H$ be a maximal abelian subgroup of $G$. If $G \not= H$
then take $g \in G \setminus H$ and note that for some $m > 1$ we
have $g^m \in H \setminus \{ 1 \}$. Then $[g, g^m] = 1 = [g^m ,
H]$ and by transitive commutativity $[g, H] = 1$ i.e. $\langle H,
g \rangle$ is an abelian subgroup of $G$, a contradiction with the
maximality of $H$.
\end{proof}
\begin{cor} \label{soluble} Every soluble pro-$p$ group $H$ from the class
$\mathcal{L}$  is abelian. If $H$  is an abelian non-procyclic
subgroup of $G_n = G_{n-1} \amalg_C A$ then $H$ is conjugate in
$G_n$ to a subgroup of $G_{n-1}$ or to a subgroup of  $A$.
\end{cor}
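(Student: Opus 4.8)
Corollary 5.8 has two assertions. The plan is to prove them in sequence, using the centralizer machinery already established together with the tree-theoretic results from Section \ref{preliminaries}.

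For the first assertion, that every soluble $H \in \mathcal{L}$ is abelian, I would argue by induction on the derived length. The point is that a soluble group has a non-trivial abelian normal subgroup, and in fact its center is non-trivial provided we can force solubility down to the situation handled by transitive commutativity. More directly, suppose $H$ is soluble and non-abelian; take the last non-trivial term $H^{(k)}$ of the derived series, which is abelian and normal in $H$. Pick $1 \neq h \in H^{(k)}$. For any $g \in H$ the conjugate $h^g$ lies in $H^{(k)}$, so $h$ and $h^g$ commute, hence $g$ normalizes $\overline{\langle h \rangle}$ up to the abelian group $H^{(k)}$; more cleanly, every element of $H$ lies in $N_H(\overline{\langle h\rangle})$ once one checks that conjugation by $g$ sends $\overline{\langle h\rangle}$ into the abelian group containing it. Since by Theorem \ref{centralizers} we have $N_{G_n}(\overline{\langle h\rangle}) = C_{G_n}(\overline{\langle h\rangle})$ and this is abelian, it follows that $H \subseteq C_{G_n}(\overline{\langle h\rangle})$ is abelian, a contradiction. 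I expect the only subtlety here is verifying that normalizing the procyclic group (rather than merely centralizing a single element) is what the hypotheses deliver; the equality $N = C$ from Theorem \ref{centralizers} is exactly the tool that collapses this.

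For the second assertion I would reduce the abelian non-procyclic subgroup $H \leq G_n = G_{n-1} \amalg_C A$ to a vertex stabilizer via the action on the standard pro-$p$ tree $T$. Since $H$ is abelian it is in particular soluble and does not contain a free non-abelian pro-$p$ subgroup, so case (b) of Theorem \ref{actions} is excluded. If case (c) held, then some edge stabilizer $H_e$ would be normal in $H$ with $H/H_e$ isomorphic to $\BZ_p$ or $C_2 \amalg C_2$; the latter is non-abelian and hence impossible for abelian $H$, while in the former case I would use the argument of equation (\ref{edge1}): after conjugation $H_e \leq C$ is procyclic, so $N_{G_n}(H_e) = A$, forcing $H \leq A$ and contradicting that $H$ is genuinely using the tree non-trivially. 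Thus only case (a) survives, giving $H \leq G_v$, i.e. $H$ is conjugate into $G_{n-1}$ or into $A$, which is the claim.

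The main obstacle is handling case (c) of Theorem \ref{actions} cleanly in the second part, specifically ruling out that $H$ acts with a normal procyclic edge stabilizer and infinite cyclic quotient while genuinely moving the tree. The natural route is to observe that if $H/H_e \simeq \BZ_p$ then $H$ is procyclic-by-procyclic, hence (by the now-established fact that such groups are abelian, as in part II of the proof of Theorem \ref{centralizers}) abelian and two-generated-by-procyclic in a way that is incompatible with being non-procyclic unless it already sits inside a vertex group; invoking (\ref{edge1}) to identify $N_{G_n}(H_e)$ with $A$ is the decisive step. I would be careful to note that the minimal invariant subtree of \cite[Lemma~3.11]{horizons} may be needed to guarantee that the edge and vertex stabilizers being discussed are genuinely nontrivial, paralleling part III of the previous proof, but the solubility/abelian hypothesis should make the bookkeeping lighter than in Theorem \ref{centralizers} itself.
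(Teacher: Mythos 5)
Your proof of the second assertion is essentially the paper's argument (act on the standard pro-$p$ tree of $G_n$, exclude case (b) of Theorem \ref{actions}, handle case (c) via (\ref{edge1})), with one logical slip: when $H/H_e\simeq\BZ_p$ and $H_e\neq 1$, the conclusion $H\subseteq N_{G_n}(H_e)=A$ (after conjugating $H_e$ into $C$) is not a contradiction --- it is precisely one of the two allowed outcomes of the statement, so case (c) is not ``excluded'' but rather delivers the result directly; this is exactly how the paper uses it. The genuine gap is in your proof of the first assertion. From $h^g\in H^{(k)}$ and $[h,h^g]=1$ you cannot conclude $g\in N_H(\overline{\langle h\rangle})$: conjugation by $g$ carries $\overline{\langle h\rangle}$ to the possibly different procyclic subgroup $\overline{\langle h^g\rangle}$ of $H^{(k)}$, and nothing forces $\overline{\langle h\rangle}^g=\overline{\langle h\rangle}$ unless $H^{(k)}$ happens to be procyclic. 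Theorem \ref{centralizers} cannot repair this, because the equality $N=C$ only helps once membership in the normalizer is established, and that membership is exactly what is missing.

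Moreover, this is not a fixable technicality: no argument using only the conclusions of Theorem \ref{centralizers} (abelian centralizers, commutative transitivity, $N=C$ for procyclic subgroups) can prove that soluble groups are abelian. Take $M=\BZ_p[\sqrt d]$ with $d$ a non-square unit of $\BZ_p$ ($p$ odd) and let a generator of $Q\simeq\BZ_p$ act on $M$ by multiplication by the unit $u=1+p\sqrt d$. The pro-$p$ group $M\rtimes Q$ is torsion-free, metabelian and non-abelian, yet every non-trivial element of $Q$ acts without non-trivial fixed points on $M$ (since $\BZ_p[\sqrt d]$ is a domain and $u^\lambda\neq 1$ for $\lambda\neq 0$), whence $C(m)=M$ for $1\neq m\in M$, $C(g)$ embeds in $Q$ for $g\notin M$, and one checks $N(\overline{\langle m\rangle})=C(m)$ as well; so all the centralizer properties you invoke hold, and your argument, if valid, would prove this group abelian. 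The first assertion therefore genuinely needs the structure the paper extracts from the tree: induction on $n$, with solubility killing case (b) of Theorem \ref{actions}, case (a) handled by the inductive hypothesis, and case (c) handing you a \emph{normal} procyclic subgroup $H_e\trianglelefteq H$ --- the object your derived-series argument fails to produce --- so that $H\subseteq N_{G_n}(H_e)$ is abelian by Theorem \ref{centralizers}. Note finally that for the soluble assertion the subcase $H/H_e\simeq C_2\amalg C_2$ cannot be dismissed on solubility grounds (infinite dihedral pro-$2$ is soluble); the paper eliminates it using torsion-freeness (Lemma \ref{torsion}) when $H_e=1$ and the abelianness of $N_H(H_e)$ when $H_e\neq 1$, a case your proposal never addresses since your first-part argument avoids the tree entirely.
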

\begin{proof}  Let $T$ be the canonical pro-$p$ tree on which $G_n$ acts
(see Section \ref{preliminaries}). Then by Theorem \ref{actions}
from the preliminaries either $H$ stabilizes a vertex or there is
an edge $e$ such that $H / H_e$ is either $\BZ_p$ or $C_2 \amalg
C_2$.

In the first case $H$ is either conjugate to a subgroup of
$G_{n-1}$ or $H$ is a subgroup of an abelian vertex stabilizer.
Using induction on $n$ in both cases $H$ is abelian.

Suppose that $H / H_e \simeq \BZ_p$. If $H_e = 1$ there is nothing
to prove, so we can assume that $H_e \not= 1$. Since $H_e$ is
procyclic we conclude that $H_e \simeq \BZ_p$. By  Theorem
\ref{centralizers} $N_{G_n}(H_e)$ is  abelian. But  $H \subseteq
N_{G_n}(H_e)$ so $H$ is abelian. Moreover,  substituting in
(\ref{edge1}) $M$ with $H$ and conjugating $H_e$ if necessary to
have $H_e \subseteq C$ we get that  $H$ is  conjugate to a
subgroup of an abelian vertex group.

Suppose that $H / H_e \simeq C_2 \amalg C_2$. If $H_e = 1$ then
$C_2 \amalg C_2$ embeds in $H$ but by Lemma \ref{torsion} $H$ is
torsion free, a contradiction. If $H_e \not= 1$ then $H_e \simeq
\BZ_2$ and $H = N_H(H_e)$. By Theorem \ref{centralizers}
$N_H(H_e)$ is abelian and hence $H / H_e \simeq C_2 \amalg C_2$ is
abelian, a contradiction.
 \end{proof}

\section{Finitely generated  normal subgroups} \label{sectionnormal}

\begin{lemma} \label{finiteorder} Let $G$ be a pro-$p$ group with a lower
central series with torsion-free quotients and $\varphi \in
Aut(G)$ be an automorphism of finite order that acts trivially on
the  abelianization $G / [G,G]$. Then $\varphi$ is trivial.
\end{lemma}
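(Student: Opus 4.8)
The plan is to exploit the lower central series filtration and reduce the statement to a sequence of linear maps on the torsion-free abelian quotients, where the finite order of $\varphi$ together with triviality on the first quotient forces triviality everywhere. Write $\gamma_1 = G \supseteq \gamma_2 = [G,G] \supseteq \gamma_3 \supseteq \cdots$ for the lower central series, and set $L_i = \gamma_i / \gamma_{i+1}$, which by hypothesis is a torsion-free finitely generated abelian pro-$p$ group, i.e.\ $L_i \simeq \BZ_p^{k_i}$ for some $k_i$. Since the lower central series is characteristic, $\varphi$ induces an automorphism $\bar\varphi_i$ of each $L_i$, and by hypothesis $\bar\varphi_1$ is the identity on $L_1 = G/[G,G]$.

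\emph{First I would} prove by induction on $i$ that $\bar\varphi_i$ is the identity on every quotient $L_i$. The base case $i=1$ is the hypothesis. For the inductive step, the key observation is that the commutator map induces a surjection $L_1 \otimes L_{i-1} \twoheadrightarrow L_i$ (sending $\bar x \otimes \bar y$ to the class of $[x,y]$), which is compatible with the action of $\varphi$. If $\bar\varphi_1$ and $\bar\varphi_{i-1}$ are both trivial, then $\varphi$ acts trivially on the generators of $L_i$ coming from such commutators, and since these generate $L_i$ topologically, $\bar\varphi_i$ is trivial as well. Thus $\varphi$ acts trivially on all quotients $L_i = \gamma_i/\gamma_{i+1}$.

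\emph{Next I would} convert the triviality on all the graded pieces into triviality of $\varphi$ itself, and this is \textbf{the step I expect to be the main obstacle}. Acting trivially on every quotient $\gamma_i/\gamma_{i+1}$ means that for each $g \in G$ we have $\varphi(g) g^{-1} \in \gamma_2$, and more generally $\varphi$ induces the identity on $G/\gamma_n$ for every $n$ only if the central series is complete, i.e.\ $\bigcap_n \gamma_n = 1$. The naive argument shows $\varphi(g)g^{-1} \in \bigcap_n \gamma_n$, so one must either know the series is residually nilpotent or find another route. The finite order of $\varphi$ is precisely what rescues this: if $\varphi$ has order $p^s$ and acts as the identity modulo $\gamma_2$, a standard ``collection'' estimate on a $p$-group of automorphisms shows that $\varphi^{p^s}$ acts as the identity modulo $\gamma_{s+2}$ (each application of a $p$-th power pushes the displacement one step deeper once one works modulo the appropriate congruence), forcing the displacement of $\varphi$ itself into arbitrarily deep terms of the series. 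Combined with the triviality on each graded piece, this pins $\varphi(g)g^{-1}$ inside $\bigcap_n \gamma_n$, and torsion-freeness of the quotients ensures this intersection is trivial.

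\emph{Finally} I would assemble these pieces: for each $g$, the element $\varphi(g)g^{-1}$ lies in every $\gamma_n$, hence in $\bigcap_n \gamma_n = 1$, so $\varphi(g) = g$ and $\varphi$ is the identity automorphism. The delicate point throughout is the interplay between the finite ($p$-power) order of $\varphi$ and the depth of the central series, which is exactly why the hypothesis on the order cannot be dropped; without it an infinite-order automorphism trivial on the abelianization need not be trivial.
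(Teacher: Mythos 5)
Your first step (triviality of $\varphi$ on every graded quotient $\gamma_i/\gamma_{i+1}$, via the commutator surjection from the abelianization) is correct, and it is also the first observation in the paper's proof. The genuine gap is in your second step. The mechanism you invoke --- ``each application of a $p$-th power pushes the displacement one step deeper'' --- is the finite $p$-group phenomenon: it holds for series whose quotients are elementary abelian (the lower $p$-central series), precisely because there $m^p$ lies one level deeper by construction, i.e.\ because of torsion. In the present setting the quotients are torsion-free, and the mechanism fails: if $\varphi(g)=gm$ with $m\in\gamma_i(G)\setminus\gamma_{i+1}(G)$, then $\varphi^p(g)=g\,m\varphi(m)\cdots\varphi^{p-1}(m)\equiv g\,m^p \bmod \gamma_{i+1}(G)$, and $m^p\notin\gamma_{i+1}(G)$ exactly because $\gamma_i(G)/\gamma_{i+1}(G)$ is torsion-free --- powering does not push the displacement deeper at all. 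Moreover, the statement you extract, that $\varphi^{p^s}$ acts as the identity modulo $\gamma_{s+2}(G)$, is vacuous, since $\varphi^{p^s}$ \emph{is} the identity by hypothesis; no constraint on $\varphi$ itself can be ``forced'' from it, so that sentence is a non sequitur as written.

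What the finite order actually buys is the reverse implication, and this is the paper's inductive step, which your proposal never carries out: assume inductively that $\varphi$ is trivial on $G/\gamma_i(G)$ and write $\varphi(g)=gm$ with $m\in\gamma_i(G)$; since $\varphi$ is trivial on $\gamma_i(G)/\gamma_{i+1}(G)$, one computes $g=\varphi^k(g)=g\,m\varphi(m)\cdots\varphi^{k-1}(m)\equiv g\,m^k \bmod \gamma_{i+1}(G)$, where $k$ is the order of $\varphi$; hence $m^k\in\gamma_{i+1}(G)$, and \emph{now} torsion-freeness of $\gamma_i(G)/\gamma_{i+1}(G)$ yields $m\in\gamma_{i+1}(G)$. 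This is where torsion-freeness enters --- not, as in your last sentence, to guarantee $\bigcap_n\gamma_n(G)=1$, which torsion-freeness of the quotients does not imply. That final point needs no hypothesis at all: every pro-$p$ group is residually nilpotent, since each quotient $G/U$ by an open normal subgroup is a finite $p$-group, hence nilpotent, so $\gamma_c(G)\subseteq U$ for some $c$; therefore $\bigcap_n\gamma_n(G)\subseteq\bigcap_U U=1$ automatically, and with the corrected inductive step the proof closes.
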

\begin{proof} The proof is similar to the proofs of
\cite[Thm.~4.5.6]{PavelRibesbook}, \cite[Lemma,~p.323]{Lubotzky}
where the case of a finite rank free pro-$p$ group $G$ is
considered.

Let $\gamma_i(G)$ be the $i$th term of the lower central series of
$G$. We prove by induction on $i$ that $\varphi$ acts trivially on
$G / \gamma_i(G)$. Suppose we know that $\varphi(g) = g m$ for
some $g \in G, m \in \gamma_i(G)$. We aim to show that $m \in
\gamma_{i+1}(G)$. Note that since $\varphi$ acts trivially on $G /
\gamma_2(G)$ we have that $\varphi$ acts trivially on $\gamma_i(G)
/ \gamma_{i+1}(G)$. Let $k$ be the order of $\varphi$. Then $g =
\varphi^k(g) = g m \varphi(m) \varphi^2(m) \ldots \varphi^{k-1}(m)
\in g m^k \gamma_{i+1}(G)$. Then the image of $m$ in $\gamma_i(G)
/ \gamma_{i+1}(G)$ has finite order. Finally since $\gamma_i(G) /
\gamma_{i+1}(G)$ is torsion-free we get that $m \in
\gamma_{i+1}(G)$, as required. \end{proof}
\begin{theorem} \label{directproduct}  Let $H$ be a pro-$p$ group acting on
a pro-$p$ tree $T$ with a finitely generated normal pro-$p$
subgroup $L$ such that $H / L \simeq \BZ_p$. Then there is an open
subgroup $H_1$ of $H$ containing $L$ such that $H_1 /
\widetilde{L} = (L / \widetilde{L}) \times \BZ_p$, where
$\widetilde{L} = \overline{\langle L_v \rangle}_{v \in V}$ and $V$
is the vertex set of $T$.
\end{theorem}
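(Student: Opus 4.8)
The plan is first to quotient by $\widetilde L$ and reduce the whole question to a free action. Since $L\trianglelefteq H$, conjugation by any $h\in H$ carries the stabilizer $L_v$ onto $L_{hv}$, so $H$ permutes the generating family $\{L_v\}_{v\in V}$ and $\widetilde L$ is normal in $H$; I may therefore form $\bar H=H/\widetilde L$, $\bar L=L/\widetilde L$, $\bar T=T/\widetilde L$. By Theorem \ref{freeaction} (and the freeness recorded in the proof of Lemma \ref{converse}) the group $\bar L$ is free pro-$p$ and acts \emph{freely} on $\bar T$; it is finitely generated because $L$ is, and $\bar H/\bar L\cong H/L\cong\BZ_p$. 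The problem is now to find $t''\in\bar H$ whose image generates an open subgroup of $\bar H/\bar L$ and which centralizes $\bar L$: then $\bar H_1:=\bar L\langle t''\rangle=\bar L\times\langle t''\rangle\cong (L/\widetilde L)\times\BZ_p$ is open in $\bar H$ (its image in $\BZ_p$ is open, and $\langle t''\rangle\cap\bar L=1$ since that intersection maps to $0$), and its preimage $H_1$ in $H$ is the required open subgroup containing $L$.

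The engine of the argument is an observation about free actions that lets me avoid any control on the order of the automorphism that $\BZ_p$ induces on $\bar L$: if $t\in\bar H$ acts \emph{trivially} on $\bar T$, then $t$ centralizes $\bar L$. Indeed, for $\ell\in\bar L$ the element $(t\ell t^{-1})\ell^{-1}$ lies in $\bar L$ and acts trivially on $\bar T$; since $\bar L$ acts freely it must equal $1$, whence $t\ell t^{-1}=\ell$. So it suffices to produce $t''$ in a suitable coset that acts trivially on $\bar T$.

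To manufacture such a $t''$ I would pass to the minimal subtree. If $\bar L=1$ the statement is trivial, so assume $\bar L\neq 1$; then $\bar L$ fixes no vertex and there is a unique minimal $\bar L$-invariant subtree $D\subseteq\bar T$ (cf.\ \cite[Lemma~3.11]{horizons}). As $\bar L\trianglelefteq\bar H$, uniqueness forces $D$ to be $\bar H$-invariant, so I replace $\bar T$ by $D$. The quotient $\bar\Delta:=D/\bar L$ is then a \emph{finite} graph, and $D\to\bar\Delta$ is the pro-$p$ universal covering with deck group $\bar L$. Now $\bar H$ acts on $\bar\Delta$ through $\bar H/\bar L\cong\BZ_p$ (because $\bar L$ acts trivially on its own quotient), and since $\bar\Delta$ is finite this action has finite image in $\mathrm{Aut}(\bar\Delta)$. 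Hence an open subgroup $p^k\BZ_p$ acts trivially on $\bar\Delta$; choosing $\tilde t\in\bar H$ mapping to a generator of it, $\tilde t|_D$ covers the identity of $\bar\Delta$ and is therefore a deck transformation, so it agrees with the action of some $\ell\in\bar L$. Then $t'':=\ell^{-1}\tilde t$ acts trivially on $D$ and still maps to a generator of $p^k\BZ_p$, and by the observation above $t''$ centralizes $\bar L$, finishing the proof.

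The step I expect to be the main obstacle is precisely the finiteness of $\bar\Delta=D/\bar L$ together with the identification of $D\to\bar\Delta$ as a pro-$p$ Galois covering with group $\bar L$. This is where the finite generation of $L$ and the pro-$p$ analogue of ``the minimal tree of a finitely generated free group has finite quotient'' must be invoked, and it is exactly what makes the geometric deck-transformation argument succeed without appealing to the order of the induced automorphism (the route through Lemma \ref{finiteorder} founders here, since a $\BZ_p$-action on $\bar L$ acting trivially on $H_1(\bar L,\BF_p)$ need not have finite order in $\mathrm{Aut}(\bar L)$).
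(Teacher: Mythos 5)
Your reduction to $\bar H=H/\widetilde L$ acting on $\bar T=T/\widetilde L$, with $\bar L=L/\widetilde L$ free pro-$p$ and acting freely (this is how the paper itself reads Theorem \ref{freeaction}, cf.\ the proof of Lemma \ref{converse}), and your ``engine'' (an element acting trivially on $\bar T$ centralizes any subgroup acting freely on $\bar T$) are both correct, as is the passage to the minimal $\bar L$-subtree $D$ and its $\bar H$-invariance. The proof fails at exactly the step you flagged: the finiteness of $\bar\Delta=D/\bar L$. There is no pro-$p$ analogue of ``the minimal tree of a finitely generated free group has finite quotient''; the statement is \emph{false}, already for $\bar L\cong\BZ_p$. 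For example, set $c_0=1$, $c_{n+1}=c_n(1+p^n)$, let $\Gamma_n$ be the cycle of length $p^nc_n$ (vertex and edge sets $\BZ/p^nc_n$, with $d_0(e)=e$, $d_1(e)=e+1$), let $\BZ_p$ act on $\Gamma_n$ by rotation by $c_n$ (an action through $\BZ/p^n$, free on vertices and edges), and connect the levels by the wrapping maps $\Gamma_{n+1}\to\Gamma_n$ given by reduction modulo $p^nc_n$; these are equivariant because $c_{n+1}-c_n=p^nc_n$. The inverse limit $T=\varprojlim\Gamma_n$ is a pro-$p$ tree (each wrapping map has degree $p(1+p^n)$, hence induces the zero map on $H_1(-,\BF_p)$, so the reduced homology of the limit vanishes), the action of $\BZ_p$ on $T$ is free, and it is minimal: the image of any nonempty invariant subtree in $\Gamma_n$ is a connected, rotation-invariant subgraph of a cycle, hence all of $\Gamma_n$, and a closed subset of $T$ mapping onto every $\Gamma_n$ is all of $T$. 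Yet $T/\BZ_p\cong\varprojlim C_{c_n}$ is an \emph{infinite} profinite graph. So finite generation plus minimality does not yield a finite quotient graph; your $\bar\Delta$ may be infinite, the action of $\bar H/\bar L\cong\BZ_p$ on it need not factor through a finite group, and the deck-transformation argument never gets started. To exclude this you would have to use the ambient extension $\bar H$ in some essential way, which your argument does not do.

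This pathology --- quotients of pro-$p$ trees by finitely generated subgroups are profinite graphs that can be far from finite --- is precisely why the paper's proof has the shape it does. It never confronts the $\BZ_p$-action on $\bar L$ directly: it runs over open subgroups $U$ of $H$ containing $L$, forms $\widetilde U=\overline{\langle U_v\rangle}$ and the finite-rank free groups $V_U/\widetilde U$, so that the group acting by conjugation is the genuinely \emph{finite} cyclic group $H/V_U$; it bounds the order of these finite cyclic actions uniformly via the torsion-freeness of congruence subgroups of $GL_d(\BZ_p)$, passes to a finite-index subgroup $H_1$ killing all the induced actions on abelianizations, applies Lemma \ref{finiteorder} at each finite level $U$ (where the automorphism really does have finite order, so the lemma applies), and only then takes the inverse limit over $U$ to descend to $\widetilde L$. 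So your closing parenthetical has it backwards: the route through Lemma \ref{finiteorder} is the paper's route, and it does not founder, because the finite-order hypothesis is restored by working at the levels $\widetilde U$ for $U$ open and passing to the limit, rather than at $\widetilde L$ itself.
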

\begin{proof} Without loss of generality we can assume that $L \not=
\widetilde{L}$ otherwise there is nothing to prove. Let $U$ be an
open subgroup of $H$ such that $L \subseteq U$. Set $\widetilde{U}
= \overline{\langle U_v \rangle}_{v \in V}$.
 By Theorem \ref{freeaction}  the group $H / \widetilde{U}$ acts on the
pro-$p$ tree $T / \widetilde{U}$ and since $U / \widetilde{U}$
acts freely, $U / \widetilde{U}$ is a free pro-$p$ group of finite
rank. Let $V_U$ be the maximal subgroup of $H$ such that $U
\subseteq V_U$ and $V_U / \widetilde{U}$ is a free pro-$p$ group.
Then $H / \widetilde{U} = (V_U / \widetilde{U}) \rtimes C_d$,
where $[H : V_U ] = d$ and $C_d$ denotes the cyclic group of order
$d$. Define $V = \cap_U V_U$.

\bigskip
{\bf Claim.} $V = L.$

\medskip
{\it Proof of the Claim. } Assume that $V \not= L$.  Then $V$ is
an open subgroup of $H$ and $V_U = V$ for infinitely many $U$ such
that $\cap U = L$. Consider the normal subgroup $L \widetilde{U} /
\widetilde{U}$ of the free pro-$p$ group  $V / \widetilde{U}$,
where $V / \widetilde{U}$ has rank at most $d(L) + 1$. Note that
$L \widetilde{U} / \widetilde{U}$ is finitely generated  and
normal in a finitely generated free pro-$p$ group. By
\cite[Proposition 8.6.13]{PavelRibesbook} either $L \widetilde{U}
/ \widetilde{U}$ is trivial or of finite index in $V /
\widetilde{U}$. In the first case  $L \subseteq \widetilde{U}$, so
$L \subseteq \cap \widetilde{U} = \widetilde{L}$, a contradiction.
Then we can assume that $L \widetilde{U} / \widetilde{U}$ has
finite index in $V / \widetilde{U}$, say $m_U$ and by Schreier's
formula $d(L \widetilde{U} /\widetilde{U}) - 1 = m_U (d(V /
\widetilde{U}) - 1)$.

Suppose that $d(V / \widetilde{U}) \not= 1$. Then $d(L) - 1 \geq
d(L \widetilde{U} / \widetilde{U}) - 1 = m_U (d(V / \widetilde{U})
- 1) \geq m_U$. Thus $m_U$ is bounded from above and so $[H : L
\widetilde{U}] = [H : V ] [V : L \widetilde{U}] = [H : V] m_U \leq
[H : V] (d(L) - 1)  < \infty$. The intersection of $L
\widetilde{U}$ over $U$ is $L \widetilde{L} = L$ and on the other
hand $L \widetilde{U}$ has bounded index in $H$ and $H / L \simeq
\BZ_p$, a contradiction.

Finally it remains to consider the case when $d(V / \widetilde{U})
= 1$.  Consider a sequence of open subgroups $U_1 \supseteq U_2
\supseteq  \ldots \supseteq \cap_i U_i = L$ such that $V_{U_i} =
V, d(V / \widetilde{U}_i) = 1$. Since $V / \widetilde{U}_{i}
\simeq \BZ_p$ is a quotient of $V / \widetilde{U}_{i+1} \simeq
\BZ_p$ we deduce that $\widetilde{U}_{i+1} = \widetilde{U}_i$.
Note that $\widetilde{L}$ is the intersection of
$\widetilde{U}_i$'s i.e. $\widetilde{L} = \cap_i \widetilde{U}_i =
\widetilde{U}_i$ . Thus $\widetilde{L} = \widetilde{U}_1 \subseteq
L \subseteq U_1$. Note that $U_1 / \widetilde{U}_1$ is a subgroup
of $V / \widetilde{U}_1 \simeq \BZ_p$, so either $U_1 /
\widetilde{U}_1$ is trivial or is $\BZ_p$. In the first case $L =
\widetilde{L}$, a contradiction and in the second case  $U_1 / L$
is a proper quotient of $U_1 / \widetilde{U}_1 \simeq \BZ_p$
(remember $L \not= \widetilde{L}$), so $U_1 / L$ is finite and a
subgroup of $H / L \simeq \BZ_p$, so $U_1 = L$, a contradiction
with $U_1$ an open subgroup of $H$. This completes the proof of
the claim.

\medskip
Let us fix one $U$ as above i.e. $U$ is an open subgroup  of $H$
that contains $L$ and write $F_U = V_U / \widetilde{U}$.
Furthermore we can choose $U$ such that  $s = [H : V_U] > 1$ and
write  $M_U = H / \widetilde{U} = F_U \rtimes C_s$.  Note that $H$
acts by conjugation on the abelianization $F_U / [F_U,F_U] \simeq
\BZ_p^{d(F_U)}$, where $d(F_U) \leq d(V_U) \leq d(L) + 1$. This
gives a homomorphism
$$\varphi_U :H \to GL_{d(F_U)}(\BZ_p)$$
 such that $V_U \subseteq Ker (\varphi_U)$. Note that for a fixed $d$ there
is an upper bound to the order of finite cyclic subgroups  of
$GL_d(\BZ_p)$. Indeed
 by \cite[Thm~5.2]{Marcusbook}  the first  principal congruence subgroup for
$p$ odd and the second principal congruence subgroup for $p=2$ is
uniform and therefore is torsion-free. Thus $[H:Ker (\varphi_U)]$
is bounded by some number depending on $d(F_U)$.

Since $d(F_U)$ is bounded by $d(L) + 1$, the index $[H:Ker
(\varphi_U)]$ is bounded by a number depending on $d(L)$ as well,
so there is some subgroup of finite index $H_1$ in $H$ containing
$L$ and such that $H_1 \subseteq Ker(\varphi_U)$ for every $U$.

 By Lemma \ref{finiteorder}  every automorphism  of finite order of $F_U$
that induces the identity map on $F_U / [F_U,F_U]$ is identity.
Then
 for an open  subgroup $U$ such that $V_U \subseteq H_1$ we have $H_1 /
\widetilde{U} = (V_U / \widetilde{U}) \rtimes
 (H_1 / V_U) = (V_U / \widetilde{U}) \times
 (H_1 / V_U)$, so either $H_1/ \widetilde{U}$ is abelian, in this case $V_U
/ \widetilde{U}$ is procyclic, or $H_1/ \widetilde{U}$ is
non-abelian and the copy of $H_1 / V_U$ is the center of $H_1 /
\widetilde{U}$, in this case $V_U/ \widetilde{U}$ is a
non-procyclic free pro-$p$.  By taking the inverse limit over $U$
we get
 $$
 H_1 / \widetilde{L} = (L / \widetilde{L}) \rtimes (H_1 / L) \simeq (L /
\widetilde{L}) \times (H_1 / L),
 $$
 where $H_1 / L \simeq \BZ_p$. \end{proof}

\begin{lemma}\label{abelianstab} Let $H$ be a
pro-$p$ group acting on a second countable pro-$p$ tree $T$ with
procyclic edge stabilizers and finite rank free abelian pro-$p$
vertex groups. Let $L$ be a finitely generated normal closed
subgroup of $H$ such that $H/L\cong \BZ_p$ and $He=Le$ for every
edge $e\in E(T)$. Assume further that there is a decomposition $H
= L \rtimes \BZ_p$, where $\BZ_p$ has a generator $g$ that fixes
an edge of $T$ and such that  for every vertex $v$ of the tree $T$
we have $H_v = L_v \times \overline{\langle g l_v^{-1} \rangle}$
for some $l_v \in \widetilde{L}= \overline{\langle L_v \rangle}_{v
\in V(T)}$. Then $g$ commutes with $\widetilde{L}$.
\end{lemma}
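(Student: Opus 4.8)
The plan is to show that the conjugation map $\alpha\colon\widetilde L\to\widetilde L$, $\alpha(x)=g^{-1}xg$, is the identity; since conjugation by $g$ permutes the vertex stabilizers $L_v$, the subgroup $\widetilde L$ is normal in $H$ and $\alpha$ is well defined, and $\alpha=\mathrm{id}$ is exactly the assertion that $g$ commutes with $\widetilde L$. Two observations drive everything. First, each vertex group $H_v$ is abelian and $H_v=L_v\times\overline{\langle gl_v^{-1}\rangle}$, so $gl_v^{-1}$ centralizes $L_v$; hence for $x\in L_v$ one gets $g^{-1}xg=l_v^{-1}xl_v$, i.e. $\alpha$ restricts to the \emph{inner} automorphism $\mathrm{conj}_{l_v}$ on every free factor $L_v$. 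Second, as $g$ fixes an edge $e_0$ it fixes its endpoints $v_0,w_0$, so $g\in H_{v_0}$; since $H_{v_0}$ is abelian this already gives $\alpha|_{L_{v_0}}=\mathrm{id}$, and we may normalize $l_{v_0}=1$. Thus $\alpha$ is inner on each factor and trivial on at least one, and the whole task is to propagate triviality from $v_0$ across the tree.

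Next I would pin down the structure of $\widetilde L$. The hypothesis $He=Le$ says $H$- and $L$-orbits of every edge coincide, i.e. $H=LH_e$, so each procyclic stabilizer $H_e$ surjects onto $H/L\cong\BZ_p$; a procyclic group surjecting onto $\BZ_p$ is itself $\cong\BZ_p$ with the surjection an isomorphism, whence $L_e=H_e\cap L=1$. Therefore $L$ acts on the second countable pro-$p$ tree $T$ with \emph{trivial} edge stabilizers, and Theorems \ref{Kurosh1} and \ref{strongKurosh} present $\widetilde L=\overline{\langle L_v\rangle}$ as a free pro-$p$ product of the finite rank free abelian groups $L_v$. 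I record two consequences: by Lemma \ref{centralseries} the lower central quotients $\gamma_i(\widetilde L)/\gamma_{i+1}(\widetilde L)$ are torsion-free, and the centralizer of a nontrivial free factor $L_{v_0}$ inside such a product is $L_{v_0}$ itself.

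Now I would combine these. Because each $\alpha|_{L_v}$ is inner, conjugate factors have the same image in $\widetilde L/\overline{[\widetilde L,\widetilde L]}$, so $\alpha$ acts trivially on the abelianization. The remaining step is to upgrade this to $\alpha=\mathrm{id}$. Here I would read the family $(l_v)_v$ as a $1$-cochain on $T$: the identities $g^{-1}v=l_v^{-1}v$ (forced by $L_{g^{-1}v}=l_v^{-1}L_vl_v$ together with $L_e=1$) and the edge-compatibilities among the $l_v$ are governed by the boundary map $\delta$ of the complex (\ref{singularcomplex}). Since $T$ is a pro-$p$ tree this complex is exact, so the cocycle $(l_v)$ is a coboundary based at $v_0$; equivalently $\alpha=\mathrm{conj}_\lambda$ for a single $\lambda\in\widetilde L$. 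The normalization $\alpha|_{L_{v_0}}=\mathrm{id}$ forces $\lambda\in C_{\widetilde L}(L_{v_0})=L_{v_0}$. Finally $g\lambda^{-1}$ centralizes $\widetilde L$, hence fixes every vertex of $V^{*}=\{v:L_v\neq1\}$, in particular $w_0$; as $g$ also fixes $w_0$, the element $\lambda$ fixes $w_0$, so $\lambda\in L_{v_0}\cap L_{w_0}\subseteq L_{e_0}=1$. Thus $\lambda=1$ and $g=g\lambda^{-1}\in C_H(\widetilde L)$.

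The step I expect to be the main obstacle is the cocycle-to-coboundary passage: turning the group-theoretic data $(l_v)$ into an honest $1$-cochain, verifying the cocycle identity edge by edge, and extracting the global conjugator $\lambda$ as a continuous inverse limit over finite quotients. This is where the real difficulty lies, precisely because the infinite order of $g$ rules out the naive lower central series bootstrap of Lemma \ref{finiteorder}: although $\alpha$ is inner on each factor and trivial on the abelianization, triviality on every graded piece $\gamma_i/\gamma_{i+1}$ does not imply triviality of $\alpha$, and only the acyclicity of $T$ (anchored at the $g$-fixed edge $e_0$, where $L_{e_0}=1$) closes the gap. Making this argument rigorous in the pro-$p$ category, rather than for an abstract tree, is the delicate part, and one may, if convenient, apply the finite-order case of Lemma \ref{finiteorder} layer by layer to control the successive quotients.
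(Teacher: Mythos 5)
Your opening reductions are sound and match the paper's: from $He=Le$ and procyclic $H_e$ you correctly get $L_e=1$, hence via Theorem \ref{Kurosh1} a free pro-$p$ product decomposition of $\widetilde L$ into the vertex groups $L_v$, and your observation that conjugation by $g$ agrees with conjugation by $l_v$ on each $L_v$ (hence is trivial on the abelianization) is exactly the paper's Claim 1. But the heart of your argument --- the ``cocycle-to-coboundary'' passage producing a single $\lambda\in\widetilde L$ with $\alpha=\mathrm{conj}_\lambda$ --- is a genuine gap, and you essentially concede this by calling it the main obstacle without carrying it out. Two concrete problems: first, the complex (\ref{singularcomplex}) is a chain complex of $\BF_p$-modules, and its exactness (the definition of pro-$p$ tree) says nothing about ``cocycles'' valued in the non-abelian group $\widetilde L$; no non-abelian $H^1$-vanishing statement for profinite trees is available here, and the family $(l_v)$ is not even a well-defined continuous cochain (each $l_v$ is determined only modulo $L_v$, and $V(T)$ is a profinite space, so no edge-by-edge induction is possible). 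Second, the implication you need is false in general: an automorphism of a free (pro-$p$) product that is inner on every factor need not be globally inner --- basis-conjugating automorphisms of free products (McCool type) are exactly such examples --- so extracting $\lambda$ requires genuine input from the action of $g$ on $T$ that you never supply. (There are also smaller unhandled cases, e.g.\ your final step needs $L_{v_0}\neq 1$ and $L_{w_0}\neq 1$.)

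Moreover, you explicitly rule out the route that actually works: you write that the infinite order of $g$ makes Lemma \ref{finiteorder} unusable, whereas the paper's key idea is precisely to manufacture finite order. Setting $M=\widetilde L\rtimes\overline{\langle g\rangle}$ and $U_s=\overline{\langle (g^{p^s})^M\rangle}$, the image $\bar g$ in $M/U_s$ has order dividing $p^s$; since $g$ fixes an edge, $U_s$ is generated by vertex stabilizers, so $M/U_s$ still acts on the pro-$p$ tree $T/U_s$, and the triviality of the $g$-action on $\widetilde L^{\,ab}$ (your Claim 1) gives $U_s\cap \widetilde L\subseteq\overline{[\widetilde L,\widetilde L]}$, so each vertex group $L_v$ injects into $\widetilde L/(U_s\cap\widetilde L)$. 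One then checks this quotient is again a locally constant free pro-$p$ product of copies of the $L_v$, so Lemma \ref{centralseries} supplies torsion-free lower central quotients, Lemma \ref{finiteorder} applies to the finite-order automorphism $\bar g$, and letting $s\to\infty$ (the subgroups $U_s\cap\widetilde L$ intersect trivially) recovers $[g,\widetilde L]=1$. Your proposal would need either this finite-order reduction or a rigorous non-abelian integration argument over the profinite tree; as written it proves the preliminary claims but not the lemma.
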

\begin{proof}
Note that $Le = He$ and $H_e$ procyclic implies that $L_e = 1$ for
every edge $e$ of $T$. Then by Theorem \ref{Kurosh1}
$$
L \simeq  L_0 \amalg (L / \widetilde{L}) \hbox{ and } L_0 =
\coprod_{v \in V_0} {L_v},
$$
where $V_0$ is a subset of the vertex set $V$ and $\widetilde{L}$
is the normal closure of $L_0$ in $L$.

Note that $V_0$ is finite since $L$ is finitely generated. It
follows from the pro-$p$ version of the Kurosh Subgroup Theorem
\cite{melnikov} that
$$
\widetilde{L} = \coprod_{v \in V_0} \coprod_{w} {L_w} = \coprod_{v
\in V_1} L_v,
$$
where $w$ runs over closed set of representatives of the orbit
$Lv$ and $V_1$ is a closed subset of $V$. In particular, this is a
locally constant free pro-$p$ product.

 Observe that $He=Le$ for every edge $e$ implies $Hv=Lv$
for every vertex $v$. Recall that by assumption $H_v$ is abelian
and for every $v \in V$ and for $m \in L_v$ we have $m^{g} =
m^{l_v}$ for some $l_v \in \widetilde{L}$. This implies the
following

\bigskip
{\bf Claim 1.} The action of $g$ via conjugation  on
$\widetilde{L}^{ab} = \prod_{v \in V_1} {L_v}$   is trivial.

\bigskip
Let $M =\langle\widetilde L,g\rangle = \widetilde{L} \rtimes
\overline{\langle g \rangle}$ and $U$ be the closed subgroup of
$H$ generated by $(g^{p^s})^M$.

\bigskip
{\bf Claim 2.} $L_v^h \cap U = 1 \hbox{ for every } v \in V_1, h
\in M.$

\bigskip
Indeed $L_v \cap U \subset U \cap \widetilde{L} =
\overline{[g^{p^s}, \widetilde{L}]}$ and  $\widetilde{L} =
\coprod_{v \in V_1} {L_v}$. Since $L_v$ survives in the
abelianization of $\widetilde{L}$ and  by Claim 1 the group $U$
acts trivially on  the abelianization of $\widetilde{L}$ we get
that $\overline{[g^{p^s}, \widetilde{L}]} \cap L_v = 1$. This
completes the proof of the claim.

\bigskip
In the rest of the proof overlining means image in the quotient
group $M/U$ and should not be confused with the closure. Consider
the group $\overline{M } = M / U$ and the image
$\overline{\widetilde{L}} = \widetilde{L} / (U \cap
\widetilde{L})$ of $\widetilde{L}$ in $\overline{M}$. Since $U$ is
generated by stabilizers of edges (hence of vertices)
$\overline{M}$ acts on the pro-$p$ tree $T / U$ (cf. Theorem
\ref{freeaction}) with vertex stabilizers $L_v U / U \simeq L_v$.
As before $\overline{\widetilde{L}}$ intersects edge stabilizers
trivially and since $\widetilde{L}$ is generated by vertex
stabilizers the same holds for $\overline{\widetilde{L}}$. Hence
$$
\overline{\widetilde{L}} = \coprod_{w \in W} \overline{L}_{w}
$$
where
 $\overline{L}_{w}$ are
some pairwise not conjugated vertex groups  of
$\overline{\widetilde{L}}$. Then by Claim 2 $\overline{L}_w \simeq
L_v$ for some vertex group $L_v$ of $L$.

Write $\bar{g}$ for the image of $g$ in $\overline{M}$. Then by
Claim 1
\begin{equation} \label{abelaction2}
\overline{g} \hbox{ acts trivially (via conjugation) on the
abelianization of }\overline{\widetilde L} .
\end{equation}
 Recall that every $L_v$ is a finite rank free
abelian group. Note that $(U \cap \widetilde{L})$ is contained in
the kernel of the natural epimorphism $L\longrightarrow \prod_{v
\in V_0} {L_v}$ restricted to $\widetilde L$ and so we have a
natural epimorphism $\coprod_{w \in W}
\overline{L}_{w}\longrightarrow \prod_{v \in V_0} {L_v} $, so the
free pro-$p$ product
$$ \overline{\widetilde{L}} = \coprod_{w \in W} \overline{L}_{w}
$$ is also locally constant. Then $\overline{\widetilde{L}}$ has torsion-free
abelianization and by Lemma \ref{centralseries}
$\overline{\widetilde{L}}$ has a torsion-free lower central
series. Then since $\overline{g}$ has finite order we deduce from
Lemma \ref{finiteorder} that $\overline{g}$ acts trivially on
$\overline{\widetilde{L}}$. Taking an inverse limit over $U$ (i.e.
$s$ goes to infinity) we get that $g$ acts trivially on
$\widetilde{L}$ as required.
\end{proof}
\begin{lemma} \label{hyperbolic} Let $H$ be a pro-$p$ group from the class
$\mathcal{L}$ with a finitely generated normal pro-$p$ subgroup
$L$ such that $H / L \simeq \BZ_p$. Then $H$ is abelian.
\end{lemma}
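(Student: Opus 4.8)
The plan is to induct on the weight $n$ of $H$, i.e.\ on the least $n$ with $H\le G_n\in\mathcal G_n$. For the base case $n=0$ the group $H$ is a finitely generated free pro-$p$ group, and by \cite[Proposition~8.6.13]{PavelRibesbook} a finitely generated normal subgroup $L$ of such a group is either trivial or of finite index; since $H/L\simeq\BZ_p$ has infinite order, $L$ is trivial and $H\simeq\BZ_p$ is abelian. For $n\ge 1$ write $G_n=G_{n-1}\amalg_C A$ and let $H$ act on the standard pro-$p$ tree $T$ of this amalgam by restriction, and apply Theorem \ref{actions}. In the elliptic case (a) the group $H$ stabilizes a vertex, so up to conjugacy $H\le A$ (abelian) or $H\le G_{n-1}$, where $H$ has weight $\le n-1$ and the induction hypothesis applies, the data $L\triangleleft H$, $H/L\simeq\BZ_p$ being preserved. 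In case (c) there is an edge $e$ with $H_e\triangleleft H$ and $H/H_e$ either $\BZ_p$ or $C_2\amalg C_2$, with $H_e$ procyclic: if $H_e=1$ then $H\simeq\BZ_p$ (the dihedral possibility being excluded by torsion-freeness, Lemma \ref{torsion}), and if $H_e\ne 1$ then $H_e\simeq\BZ_p$ and $H\le N_{G_n}(H_e)=C_{G_n}(H_e)$, which is abelian by Theorem \ref{centralizers} (this also rules out $C_2\amalg C_2$). Thus in cases (a) and (c) the group $H$ is abelian.

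It remains to treat the irreducible (``hyperbolic'') case (b), in which $H$ contains a free non-abelian pro-$p$ subgroup $P$ with $P\cap G_v=1$ for every vertex $v$. I claim this case cannot occur under our hypotheses. Note first that any open subgroup of $H$ still meets $P$ in a free non-abelian pro-$p$ group, so it suffices to reach a contradiction after replacing $H$ by a convenient open subgroup. Applying Theorem \ref{directproduct} to the action of $H$ on $T$ produces an open subgroup $H_1\supseteq L$ with $H_1/\widetilde L=(L/\widetilde L)\times\BZ_p$, where $\widetilde L=\overline{\langle L_v\rangle}_{v\in V(T)}$; relabelling $H_1$ as $H$, I fix a splitting $H=L\rtimes\BZ_p$ (legitimate since $\BZ_p$ is free pro-$p$, hence projective) with distinguished generator $g$ of the $\BZ_p$-factor provided by that decomposition.

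The heart of the argument is to arrange the hypotheses of Lemma \ref{abelianstab} for the action of $H$ on (the minimal invariant subtree of) $T$, and then to feed its conclusion into transitive commutativity. The point is to reduce to \emph{trivial} edge stabilizers for $L$: since $G_n$ has procyclic edge stabilizers, the orbit identity $He=Le$ yields $L_e=1$ for every edge, whereupon the pro-$p$ Kurosh Subgroup Theorem (Theorem \ref{Kurosh1}) writes $\widetilde L$ as a free pro-$p$ product $\coprod_{v\in V_1}L_v$ of vertex groups, with only finitely many isomorphism types since $L$ is finitely generated. All vertex groups are then abelian: one lying in a conjugate of $A$ trivially, and one lying in a conjugate of $G_{n-1}$ because it is finitely generated with a $\BZ_p$-quotient by the finitely generated $L_v$, so the induction hypothesis forces abelianity. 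With abelian vertex groups, trivial edge stabilizers for $L$, and the vertex-wise product form $H_v=L_v\times\overline{\langle g\,l_v^{-1}\rangle}$ furnished by Theorem \ref{directproduct}, Lemma \ref{abelianstab} applies and gives that $g$ commutes with $\widetilde L$. Now if $\widetilde L=\coprod_{v\in V_1}L_v$ is a nontrivial free product of two or more nontrivial factors, choose nontrivial $a\in L_{v_1}$ and $b\in L_{v_2}$ in distinct factors: since $[a,g]=1=[g,b]$ and $g\ne 1$, transitive commutativity (Corollary \ref{transitivecom}) forces $[a,b]=1$, contradicting the free product structure. Hence $\widetilde L=L_v$ is a single abelian vertex group, which (again using $L_e=1$) forces $L/\widetilde L$ to be trivial, so $L=L_v$ fixes a unique vertex $v$; as $L\triangleleft H$ and $H/L=\langle g\rangle$, the generator $g$ fixes $v$ as well, so $H\le G_v$ is elliptic, contradicting case (b). This completes the contradiction, and therefore $H$ is abelian.

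The main obstacle is precisely the verification in case (b) of the full hypothesis list of Lemma \ref{abelianstab}. Two points demand real care. First, the reduction to \emph{trivial} edge stabilizers for $L$ (the identity $He=Le$ for all edges): only once $L_e=1$ does the pro-$p$ Kurosh theorem deliver finitely generated vertex groups $L_v$, which is what allows the induction hypothesis to be invoked at each vertex lying in a conjugate of $G_{n-1}$. This is essential rather than cosmetic, because closed subgroups of a finitely generated pro-$p$ group need not be finitely generated, so without trivial edge stabilizers one cannot even assert $L_v\in\mathcal L$. Second, one must confirm that the distinguished generator $g$ of the $\BZ_p$-complement can be taken to fix an edge and to split each vertex stabilizer as $H_v=L_v\times\overline{\langle g\,l_v^{-1}\rangle}$ with $l_v\in\widetilde L$ — exactly the inverse-limit data that Theorem \ref{directproduct} is designed to supply. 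Marshalling these reductions so that Lemma \ref{abelianstab} genuinely applies, rather than the final transitive-commutativity contradiction, is where the difficulty lies.
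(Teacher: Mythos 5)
Your base case, your handling of the elliptic cases (a) and (c) of Theorem \ref{actions}, and your endgame (feeding the conclusion of Lemma \ref{abelianstab} into transitive commutativity) are all reasonable, and the overall skeleton does match the paper's strategy. But there is a genuine gap exactly where you yourself locate ``the heart of the argument'': you never establish the orbit identity $He=Le$, you simply use it. This identity says that every edge stabilizer $H_e$ surjects onto $H/L\simeq\BZ_p$, and it is far from automatic (if even one edge has trivial $H$-stabilizer it fails). In the paper it is obtained only after a long reduction: pass to the minimal $H$-invariant subtree $T$, prove the action is faithful, let $H/\widetilde L$ act on the minimal subtree $T_1$ of $T/\widetilde L$, and prove the dichotomy that either the $\BZ_p$-factor $M$ of $H/\widetilde L$ acts trivially on $T_1$, or $L=\widetilde L$. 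Only in the first branch does one get $He=Le$, and even then only for edges of a connected component $C$ of the preimage of $T_1$ in $T$, which forces replacing $(H,L,T)$ by the component stabilizers $(H_2,L_2,C)$; the paper's Claim~3, that $H_2$ and $L_2$ are still finitely generated (without which neither Theorem \ref{directproduct} nor Lemma \ref{abelianstab} is applicable), is the hardest technical step of the entire proof, relying on collapsing of invariant subgraphs, \cite{open} and \cite{bulletin}. None of this appears in your proposal. Relatedly, your assertion that the edge-fixing generator $g$ and the vertex splittings $H_v=L_v\times\overline{\langle g\,l_v^{-1}\rangle}$ are ``exactly the data that Theorem \ref{directproduct} is designed to supply'' is incorrect: that theorem yields only the quotient decomposition $H_1/\widetilde L=(L/\widetilde L)\times\BZ_p$; the edge-fixing of $g$ and the vertex-wise splittings come from $He=Le$, from the trivial action of $M$ on $T_1$, and from the abelianity of the vertex groups $H_v$ obtained by minimality --- that is, from exactly the reduction you omit.

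Second, the other branch of the dichotomy, $L=\widetilde L$, is entirely absent from your argument, and it cannot be folded into your scheme: when $L=\widetilde L$ the identity $He=Le$ generally fails and Lemma \ref{abelianstab} is unavailable. The paper disposes of this case by a completely different argument (an inverse-limit argument placing $L$ in a conjugate of $G_{n-1}$ or $A$, followed by the epimorphism $G_n\to G_{n-1}\amalg_C\overline A$ with free kernel and $d(\overline A)=d(A)-1$), and that argument requires taking a minimal counterexample with respect to the lexicographic pair $(n,d(G_n))$, not merely induction on the weight $n$ as you set up. So while your architecture parallels the paper's, the two components that constitute the actual proof --- deriving $He=Le$ while preserving finite generation, and eliminating the case $L=\widetilde L$ --- are missing, and your induction scheme is too coarse to support the second one.
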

\begin{proof} Let $H = L \rtimes \BZ_p$ be a non-abelian  pro-$p$ group from
the class $\mathcal{L}$ such that $L$ is finitely generated as a
pro-$p$ group. Let  $n$ be the smallest non-negative integer such
that $H \subseteq G_n \in \mathcal{G}_n$ i.e. $n$ is the weight of
$H$. We can assume that our counterexample $H$ is minimal in the
sense that $(n, d(G_n))$ is smallest possible with respect to the
lexicographic order, where $d(G_n)$ is the minimal number of
generators of $G_n$.

The group $G_n$ acts on a second countable pro-$p$ tree $S$  with
procyclic edge stabilizers. By \cite[Lemma~3.11]{horizons} $H$
acts irreducibly on a pro-$p$ subtree $T$ of $S$ with procyclic
edge stabilizers. Let $\widetilde{L} = \overline{ \langle L_v
\rangle}_{v \in V(T)} $.

\medskip
{\bf Claim 1.} We claim that $H$ acts faithfully on $T$.

\medskip
Proof. Indeed if the action of $H$ on $T$ is not faithful since
the edge stabilizers are procyclic we get that the kernel of the
action is $K \simeq \BZ_p$. Then for any $h \in H \setminus K$ the
closed subgroup of $H$ generated by $K$ and $h$ is metabelian. But
any soluble pro-$p$  group from the class $\mathcal{L}$  is
abelian, see Corollary \ref{soluble}. Thus $K \subseteq Z(H)$, a
contradiction to Corollary \ref{centerless}.

\medskip
This way we can assume from now on that $H$ acts faithfully on
$T$. By Theorem \ref{directproduct} there is an open subgroup
$H_1$ of $H$ containing $L$ such that
$$H_1 /
\widetilde{L} \simeq (L / \widetilde{L}) \times M \hbox{ where }M
\simeq \BZ_p.$$

{\bf Claim 2.} The group $H_1$ acts irreducibly on $T$.

\medskip
Proof. By Claim 1 $H$ acts faithfully irreducibly on $T$. Since
$H_1$ is non-trivial, by \cite[Prop.~3.14]{horizons}  $H_1$ acts
irreducibly on $T$. The claim is proved.

\medskip
By Corollary \ref{virtualabelian} every virtually abelian pro-$p$
group from the class $\mathcal{L}$ is abelian, so $H_1$ is not
abelian. Then without loss of generality we can assume  that $H =
H_1$.

The pro-$p$ group $H/ \widetilde{L}$ acts on $T / \widetilde{L}$
and by  \cite[Lemma~3.11]{horizons} contains a minimal $H
/\widetilde{L}$ invariant subtree $T_1$ i.e. $H / \widetilde{L} $
acts irreducibly on the pro-$p$ tree $T_1$.

Let $N$ be the kernel of the action of $H / \widetilde{L}$ on
$T_1$ and so the quotient group $B = (H / \widetilde{L})/ N $ acts
irreducibly and faithfully on $T_1$. By
\cite[Lemma~3.16]{horizons} every non-trivial abelian normal
subgroup $A$ of $B$ is isomorphic to $\BZ_p$ and $C_B(A)$ is a
free pro-$p$ group, hence procyclic. Let $A$ be the image of $M$
in $B$. If $A$ is trivial we get that $M$ acts trivially on $T_1$.

Suppose that $A$ is non-trivial. Then
 $A \simeq \BZ_p \simeq B $.
On the other hand $L / \widetilde{L}$ acts freely on $T_1$, so $(L
/ \widetilde{L}) \cap N = 1$. Then the image of $L /
\widetilde{L}$ in $B  = \BZ_p$ is isomorphic to $L /
\widetilde{L}$, so $L / \widetilde{L}$ is either trivial or
$\BZ_p$. It follows that either $H / \widetilde{L} \simeq \BZ_p^2$
or $H / \widetilde{L} \simeq M \simeq \BZ_p$ and $L =
\widetilde{L}$. In the first case since $B \simeq \BZ_p$  by
changing $M$ with another copy of $\BZ_p$ inside $H
/\widetilde{L}$ we can  reduce to the case when $M \subseteq N$.
 Note that we have proved that
$$
\hbox{either }M \hbox{ acts trivially  on } T_1 \hbox{ or } L =
\widetilde{L}.
$$

\bigskip
{\bf Case 1.} Suppose that $L \not= \widetilde{L}$ and  $M$ acts
trivially on $T_1$. Let $C$ be a connected component of the full
preimage $S_1$ of $T_1$ in $T$.

\bigskip
{\bf Claim 3.} The (set-wise) stabilizers $H_2:=Stab_H(C)$ and
$L_2:=Stab_L(C)$   of $C$ are finitely generated.

\medskip
Proof. Note that $S_1$ is $H$-invariant and that $C$ coincides
with a connected component of the full preimage of
$T_1/(H/\widetilde L)\subseteq T/H$ in $T$. Indeed, if $C$ is
contained properly in a connected component $C_0$ of the full
preimage of $T_1/(H/\widetilde L)$ in $T$ then the image of $C_0$
in $T/\widetilde L$ contains $T_1$ properly contradicting that the
image of $C_0$ is $T_1/(H/\widetilde L)$. Similarly  $C$ coincides
with a connected component of the full preimage of
$T_1/(H/\widetilde L)=T_1/(L/\widetilde L)\subseteq T/L$ in $T$.

\medskip
Let $U$ be an open normal subgroup of $H$ and $V = U \cap L$. Then
$L/\tilde V$ has bounded edge stabilizers. Let $\Sigma(V)$ denote
the set of all finite subgroups $K\neq 1$ of $L/ \tilde V$. Since
$L/\tilde V$ is finitely generated having open free subgroup $V/
\tilde V$, by \cite[Lemma~8]{bulletin}  there is a finite subset
$S$ of $\Sigma(V)$ with $\Sigma(V) =\{K^g\mid K\in S, \ g\in
L/\tilde V\}$. Therefore the subset $T_{\Sigma(V)} :=\{m\in
T/\tilde V\mid \exists L\in\Sigma, m\in (T/\tilde V)^L\}$, which
is the union of all subtrees of fixed points $(T/\tilde V)^K$ for
subgroups $K\in\Sigma(V)$ can be represented in the form
$T_{\Sigma(V)}=\bigcup_{L\in S}(T/\tilde V)^K L/\tilde V$ and is
hence a closed $H$-invariant subgraph of $T/\tilde V$. Therefore
by \cite[Prop., p.486]{open}, the quotient graph $D(V)$ obtained
by collapsing each connected component of $T_{\Sigma(V)}$ to a
vertex is simply connected and hence is a pro-$p$ tree on which
$L/\tilde V$ acts with trivial edge stabilizers.

Since $M$ acts trivially on $T_1$ we have $L_e = 1$ for every $e
\in C$. Indeed
 for every $e \in E(C)$ we have
 $S_0 e \subseteq \widetilde{L} e \subseteq L e $ where $S_0$ is a procyclic
subgroup of $H$ that maps surjectively to $M$
 by
 the canonical map $H \to H / \widetilde{L}$.
 Then
 $H e = (L \rtimes S_0) e = Le$ and so $H_e / L_e \simeq \BZ_p$ for every $e
\in E(C)$.
 Since $H_e$ is procyclic $L_e = 1$ for all $e\in E(C)$.
But by Lemma \ref{component} $H_e=(H_{2})_{e}$ for every $e \in
E(C)$, so $L\cap H_2$ is of infinite index in $H_2$ i.e. $H_2 / (L
\cap H_2) \simeq \BZ_p$.

Put $L_{2V}=L_2/(L_2\cap \tilde V)$. Denote by $C_V$ the image of
$C$ in $T/\tilde V$. Since $L_e = 1$ for all $e \in E(C)$ we
obtain that $(L/\tilde V)_e=1$ for all $e\in E(C_V)$. Then by
Theorem \ref{Kurosh1} applied for $L_{2V}$ acting on the pro-$p$
tree $C_V$
$$
L_{2V}=  (\coprod_{v \in W} (L/\tilde V)_v) \coprod (L /
\widetilde{L})
$$ for some $W \subseteq V(C_V)$, where we used that
$$(L_{2V})_v=(L/\tilde V)_v \hbox{  and }  L /
\widetilde{L}=L_{2V} / \widetilde{L_{2V}}.$$  Indeed for every
$v\in C$ one has $L_v=(L_2)_v$ since $L_2$ is the stabilizer of
the connected component $C$, taking the images of these
stabilizers in $T/\tilde V$ one gets the first equality. For the
second equality note that the definition of $L_2$ implies that $L
= L_2 \tilde{L}$ and by the definition of $C$ the canonical map $T
\to T/ \widetilde{L}$ sends $C$ surjectively to $T_1$. By Lemma
\ref{component} $T_1 = C / (H_2 \cap \widetilde{L})$ and since
$T_1$ is a pro-$p$ tree, by Lemma \ref{converse} $\widetilde L
\cap L_2 =\widetilde L\cap H_2=\widetilde{L\cap H_2} =
\widetilde{L_2}$, where $\widetilde{L_2}$ is the closed subgroup
of $L_2 = L \cap H_2$ generated by stabilizers of vertices in $C$.
Then $$L / \tilde{L} = (L_2 \tilde{L}) / \tilde{L} = L_2 / (L_2
\cap \tilde{L}) = L_2 / \widetilde{L_2}.$$ Let $G = L_{2V} \cap
\tilde{L} / \tilde{V} = (L_2 \tilde{V} \cap \tilde{L}) / \tilde{V}
= (L_2 \cap \tilde{L}) \tilde{V} / \tilde{V} = \widetilde{L_2}
\tilde{V} / \tilde{V}$. Since $T_1 = C_V / (L_{2V} \cap \tilde{L}
/ \tilde{V})$ is a pro-$p$ tree we get that $G = \tilde{G}$, so
$\widetilde{L_{2V}} = \widetilde{L_2} \tilde{V} / \tilde{V}$. Then
$$L_{2V} / \widetilde{L_{2V}} = ({L_2} \tilde{V} / \tilde{V}) /
(\widetilde{L_2} \tilde{V} / \tilde{V}) = L_2 / \widetilde{L_2}.$$

Observe furthermore that $W$ can be chosen any $\delta(C_V /
L_{2V})$ where $\delta : C_V / L_{2V} \to C_V$ is a continious
section of the canonical projection $C_V \to C_V / L_{2V}$.

Note that the collapsing of the connected components of
$T_{\Sigma(V)}$ does not affect $C_V$, i.e. we can denote by the
same letter the isomorphic image of it in $D(V)$. Thus we have the
following commutative diagram

$$\xymatrix{C_V\ar[r]\ar[d]&T/\tilde V\ar[r]\ar[d]&D(V)\ar[d]\\
            T_1=C_V/( \widetilde{L_2} \tilde{V} /\tilde
V)\ar[r]\ar[d]&T/\tilde L\ar[r]\ar[d]&D(V)/(\tilde L/\tilde V)\ar[d]\\
            C_V/L_{2V}=C/L_2=T_1/(L/\tilde L)\ar[r]&T/L=(T/\tilde V)/L/\tilde
            V\ar[r]&D(V)/(L / \tilde{V})\cr}$$

\medskip

\noindent where all composite maps from left to right are
injections (the lower one follows from the middle one). Then by
Theorem \ref{Kurosh1} applied for the action of $ L / \tilde{V}$
on $D(V)$
$$
L/\tilde V \cong (\coprod_{v \in V_0} (L/\tilde V)_v)\coprod_{v\in
W} (L/\tilde V)_v \coprod (L / \widetilde{L})=(\coprod_{v \in V_0}
(L/\tilde V)_v)\amalg L_{2V},
$$
where $W \cup V_0=\mu(D(V)  / (L / \tilde{V}))$ with $\mu : D(V)
/ (L / \tilde{V}) \to D(V)$ to be  an extension of $\delta$ to a
continuous section of the projection $D(V) \to D(V)  / (L /
\tilde{V})$ (see \cite[Exer.~5.6.8]{PavelRibesbook}). It follows
that the number of generators of $L_{2V}$ does not exceed the
number of generators of $L/\tilde V$ for every $V$ and so the
number of generators of $L_{2}$ does not exceed the number of
generators of $L$.

Since $H_2/L_2$ is procyclic, $H_2$ is finitely generated as well.

Finally we observe that by going down to a subgroup of finite
index of $H$ if necessary we can assume  that $H_2$ is not
abelian. Indeed if $H_2$ is abelian since $H_2 \tilde{L} = H$ we
get that $H / \tilde{L} = (L / \tilde{L}) \times M$ is abelian, so
$L / \tilde{L} =\BZ_p$. Since $L$ is not soluble (see Corollary
\ref{soluble}) by Theorem \ref{actions} there is a  free
non-abelian pro-$p$ subgroup $F$ of $L$ acting freely on $T$.
Since $F = F / \tilde{F}$ is the inverse limit of $K / \tilde{K}$
where $K$ runs through the open subgroups of $L$ that contain $F$
we get that for some $K$ the free pro-$p$ group $K / \tilde{K}$ is
not procyclic. Then by substituting $L$ with $K$ if necessary we
get the desired property. This completes the proof of the claim.

\bigskip
Since $M$ fixes all edges of $T_1$
 for every $e \in E(C)$ we have
 $S_0 e \subseteq \widetilde{L} e \subseteq L e $ where $S_0$ is a procyclic
subgroup of $H$ that maps surjectively to $M$
 by
 the canonical map $H \to H / \widetilde{L}$.
 Then
 $H e = (L \rtimes S_0) e = Le$ and so $H_e / L_e \simeq \BZ_p$ for every $e
\in E(C)$.
 Since $H_e$ is procyclic $L_e = 1$ for all $e\in E(C)$.
But by Lemma \ref{component} $H_e=(H_{2})_{e}$ for every $e \in
E(C)$, so $L\cap H_2$ is of infinite index in $H_2$ i.e. $H_2 / (L
\cap H_2) \simeq \BZ_p$.

 By the definition of $C$ the canonical map $T \to T/ \widetilde{L}$ sends
$C$ surjectively to $T_1$. By Lemma \ref{component} $T_1 = C /
(H_2 \cap \widetilde{L})$ and since $T_1$ is a pro-$p$ tree, by
Theorem \ref{converse} $\widetilde L\cap H_2=\widetilde{L\cap
H_2}$, where $\widetilde{L\cap H_2}$ is the closed subgroup of $L
\cap H_2$ generated by stabilizers of vertices in $C$. Hence
replacing $H$ by $H_2$, $L$ by $L_2$ and $T$ by $C$ we may assume
that
\begin{equation} \label{important} T /
\widetilde L=T_1 \end{equation} except that  $H / \widetilde{L} =
(L / \widetilde{L}) \times \BZ_p$ might not hold. But by Theorem
\ref{directproduct} it is sufficient to change $H$ to a subgroup
of finite index that contains $L$ to repair this
 property and by Claim 2 this does not affect the fact that $H /
\widetilde{L}$ acts irreducibly on $T_1$.
 So from now on we can assume that \begin{equation} \label{novo99}  H / \widetilde{L} = (L / \widetilde{L})
\times \BZ_p. \end{equation}

Thus we have $L_e = 1$ for every $e \in E(T)$. Then by Theorem
\ref{Kurosh1}
$$
L \cong (\coprod_{v \in V_0} L_v) \coprod (L / \widetilde{L}).
$$
Since $L$ is finitely generated  all $L_v$ are finitely generated.

Fix $v \in V(T)$. Since $M$ fixes all vertices of $T_1$ we have
$Lv = Hv$. Since $H_v / L_v \simeq \BZ_p$, we have $H_v = L_v
\rtimes \BZ_p$. By the minimality of $n$ we deduce that $$H_v
\hbox{ is abelian for all } v \in V_0.$$ This together with the
fact that $M$ acts trivially on $T_1$ and (\ref{novo99}) implies
that $H$ and $L$ satisfy the assumptions of Lemma
\ref{abelianstab}  and by Lemma \ref{abelianstab} we deduce that
there is some $g \in H \setminus L$ that fixes an edge of $T$ and
$$g \hbox{ commutes with }\widetilde{L}.$$
Let $l$ be an element of $L$ such that  the image in $H /
\widetilde{L}$ of the closed subgroup of $H$ generated by $lg$  is
$M$.

Since $M$ fixes  $T_1$ we have for a fixed vertex $v$ of $T$ that
$$(lg) v = l_v v$$ for some $l_v \in \widetilde{L}$. Thus $H_v =
L_v \rtimes \overline{\langle l_v^{-1} l g \rangle}$ . Note that
by Lemma \ref{abelianstab} $g$ acts trivially (via conjugation) on
$\widetilde{L}$ , hence $g$ acts trivially on $L_v$. On the other
hand the fact that $H_v$ is abelian implies that $l_v^{-1} lg$
commutes with $L_v$. Then $l_v^{-1} l$ commutes with $L_v$. Since
$\widetilde{L}$ is a free pro-$p$ product of pro-$p$ groups (one
of which is $L_v$) we deduce that $l_v^{-1} l  \in L_v$ because
every free factor is self-centralized (see Corollary 4.4 (a) in
\cite{horizons}), so $l_v v = l v$. Then
$$g v = l^{-1} l_v v = v,$$ so $g$ fixes every vertex of $T$. But the action
of $H$ on $T$ is faithful, a contradiction.

\bigskip
{\bf Case 2.} Suppose that $L = \widetilde{L}$. Let $U$ be an open
normal subgroup of $H$ and $V = U \cap L$.
 Suppose that $V = \widetilde{V}$ for all $U$ (otherwise we can continue as
in Case 1 substituting $L$ with $V$ and $H$ with $U$). Then $L/ V
= L / \widetilde{V}$ is finite and so stabilizes some vertex of
the pro-$p$ tree $T / \widetilde{V}$.

 Since $\cap \widetilde{V} = \cap V = 1$ the inverse limit of $L /
\widetilde{V}$ over $\widetilde{V}$ is $L$. On the other hand this
inverse limit stabilizes a vertex of $T$ as every $L /
\widetilde{V}$ stabilizes some vertex of $T / \widetilde{V}$. Thus
$L$ is in a conjugate of $G_{n-1}$ or $A$. In the second case $L $
is abelian, hence $H$ is soluble and by Corollary \ref{soluble}
$H$ is  abelian, a contradiction. In the first case conjugating if
necessary we can suppose that $L \subseteq G_{n-1}$. Consider an
epimorphism $$\varphi : G_n = G_{n-1} \amalg_C A \to G_{n-1}
\amalg_C \overline{A}$$ with free kernel, $\varphi$ induces the
identity map on $G_{n-1}$ and an epimorphism $A \to \overline{A}$
where $\overline{A}$ is a free abelian pro-$p$ group of rank $d(A)
- 1$. Thus $ker (\varphi) \cap L = 1$. Then $\varphi(H) =
\varphi(L) \rtimes \BZ_p$ or $[\varphi(H) : \varphi(L)] < \infty$.
In the first case $H \simeq \varphi(H)$ is a pro-$p$ subgroup of
$G_{n-1} \amalg_C \overline{A}$. Note that the weight  of $G_{n-1}
\amalg_C \overline{A}$ is at most $n$ and that $d(G_{n-1} \amalg_C
\overline{A}) < d(G_{n-1} \amalg_C {A})$, in contradiction to the
minimality of $(n, d(G_n))$.
 In the second case $ker (\varphi) \cap H = H_0$ is infinite procyclic, and
$H \subseteq N_{G_n}(H_0)$.
 By Theorem \ref{centralizers} $N_{G_n}(H_0) = C_{G_n}(H_0)$ is abelian, so
$H$ is abelian, a contradiction.
\end{proof}
\begin{theorem} \label{normalabelian} Let $H$ be a pro-$p$ group from the
class $\mathcal{L}$ with a non-trivial  finitely generated normal
pro-$p$ subgroup $N$ of infinite index. Then $H$ is abelian.
\end{theorem}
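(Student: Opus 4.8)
The plan is to reduce the statement to Lemma~\ref{hyperbolic}, arguing by induction on the weight $n$ of $H$, and to assume throughout that $H$ is non-abelian, aiming for a contradiction. The central device I would isolate first is the following elementary consequence of Theorem~\ref{centralizers}: a non-abelian pro-$p$ group in $\mathcal{L}$ has no non-trivial procyclic normal subgroup. Indeed, if $\overline{\langle g\rangle}\neq 1$ were normal in $H$, then $H=N_H(\overline{\langle g\rangle})=C_H(\overline{\langle g\rangle})$ would be abelian by Theorem~\ref{centralizers}. I would use this repeatedly to kill degenerate cases. Writing $H\le G_n=G_{n-1}\amalg_C A$ with $n$ the weight, the base case $n=0$ is immediate: a finitely generated free pro-$p$ group with a non-trivial finitely generated normal subgroup of infinite index does not exist unless it is abelian, by \cite[Prop.~8.6.13]{PavelRibesbook}.

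For the inductive step I would let $H$ act on the standard pro-$p$ tree $T$ of $G_n$ and pass to a minimal invariant subtree via \cite[Lemma~3.11]{horizons}. If $H$ fixes a vertex, then $H$ is conjugate into $A$, hence abelian (excluded), or into $G_{n-1}$, hence of weight $<n$, so the induction hypothesis applies to $H$ with the same $N$. If $H$ acts minimally with $\abs{T}>1$, Theorem~\ref{actions} leaves two possibilities. In its case (c) the infinite dihedral alternative is ruled out by torsion-freeness (Lemma~\ref{torsion}), so there is an edge $e$ with $H_e\lhd H$ procyclic and $H/H_e\cong\BZ_p$; since $H_e$ is procyclic it is finitely generated, and Lemma~\ref{hyperbolic} applied with $L=H_e$ forces $H$ abelian, a contradiction. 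This reduces matters to case (b): $H$ contains a free non-abelian pro-$p$ subgroup $P$ meeting every vertex stabiliser trivially.

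In case (b) I would bring in the normality of $N$. Its minimal invariant subtree is $H$-invariant, so by minimality of $T$ either $N$ fixes a vertex or $N$ too acts minimally. If $N$ fixes a vertex it fixes the whole $H$-orbit, hence all of $T$, so $N$ lies in the kernel of the action, which is contained in a procyclic edge stabiliser; then $N$ is procyclic and normal, contradicting the device of the first paragraph. Hence $N$ acts minimally, and running Theorem~\ref{actions} for $N\in\mathcal{L}$: its own case (c) makes $N$ abelian by Lemma~\ref{hyperbolic}, whereupon the kernel of $N$ on $T$ is either a non-trivial procyclic normal subgroup of $H$ (device again) or trivial, in which case $N\cong\BZ_p$ acts freely and $T$ is a line, which cannot carry the free non-abelian action of $P$. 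In all these subcases we reach a contradiction, leaving the genuinely difficult configuration.

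The main obstacle is thus the fully irreducible case: $N\lhd H$ with both groups finitely generated and acting minimally on $T$, yet $[H:N]=\infty$. Here my plan is to study the induced action of $H/N$ on $T/\widetilde{N}$, where $\widetilde{N}=\overline{\langle N_v\rangle}_{v\in V(T)}\lhd H$ and $N/\widetilde{N}$ is free pro-$p$ acting freely (Theorem~\ref{freeaction}); finite generation of $N$ should force the quotient graph to be of finite type through the pro-$p$ Kurosh and structure theorems (Theorems~\ref{Kurosh1}, \ref{strongKurosh}), and this finiteness ought to be incompatible with $[H:N]=\infty$ once the procyclic edge stabilisers are controlled, or else to produce a finitely generated normal $L\supseteq N$ with $H/L\cong\BZ_p$ to feed into Lemma~\ref{hyperbolic}. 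The hard part, and the reason the argument must be carried out in $T/\widetilde{N}$ rather than in $T$, is precisely that a $\BZ_p$-edge stabiliser can cause a single $H$-orbit of edges to split into infinitely many $N$-orbits, so the naive index bookkeeping fails and one must first pass to the quotient where edge stabilisers become trivial.
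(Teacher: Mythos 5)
Your opening reductions (the procyclic-normal-subgroup device, the base case, the vertex-fixing case, and case (c) of Theorem \ref{actions} for $H$) are sound, but the proof has a genuine gap exactly where the real difficulty lies: in what you yourself call the ``genuinely difficult configuration,'' where both $H$ and $N$ act irreducibly on $T$ and $N$ falls under case (b). There you offer only intentions (``finite generation of $N$ \emph{should} force\dots'', ``this finiteness \emph{ought} to be incompatible\dots''), and no argument is given; in particular you have no mechanism for producing the one thing Lemma \ref{hyperbolic} needs, namely a finitely generated normal subgroup with quotient $\BZ_p$. Note that $[H:N]=\infty$ does not by itself hand you an element of infinite order modulo $N$: a priori the pro-$p$ group $H/N$ could be torsion (ruling this out requires either Zelmanov's theorem on torsion pro-$p$ groups, which the paper avoids, or a cohomological input as below). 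There is also a local error in your case (c) for $N$: from triviality of the kernel of the $N$-action you infer that $N\cong\BZ_p$ acts freely and $T$ is a line; this is a non sequitur, since $N_e\lhd N$ with $N/N_e\cong\BZ_p$ and trivial kernel does not force $N_e=1$. The available fix is different: case (c) makes $N$ procyclic-by-procyclic, hence soluble, hence abelian by Corollary \ref{soluble}; then for any $h\in H$ the finitely generated subgroup $\overline{\langle N,h\rangle}$ is abelian-by-procyclic, hence soluble, hence abelian by Corollary \ref{soluble} again, so $N\subseteq Z(H)$ and Corollary \ref{centerless} gives the contradiction.

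The paper's proof is entirely different and much shorter, using no tree action at the level of the theorem. Since $N\in\mathcal{L}$, it is of type $FP_{\infty}$ by Corollary \ref{infinity}; the theorem of Weigel--Zalesskii \cite{ThomasPavel} then supplies a finite-index subgroup $G_0$ of $H$ containing $N$ with $cd(G_0/N)<\infty$, so $G_0/N$ is torsion-free and, as $[H:N]=\infty$, non-trivial. Consequently for \emph{every} $g\in G_0\setminus N$ the finitely generated subgroup $T=N\overline{\langle g\rangle}$ satisfies $T/N\cong\BZ_p$, so Lemma \ref{hyperbolic} makes $T$ abelian; since $g$ was arbitrary, $N\subseteq Z(G_0)$, whence $G_0$ is abelian by Corollary \ref{centerless} and $H$ is abelian by Corollary \ref{virtualabelian}. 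In other words, the missing ingredient in your plan---how to convert ``infinite index'' into ``$\BZ_p$-quotient over a finitely generated normal subgroup''---is precisely what the cohomological result of \cite{ThomasPavel} provides, and once it is in hand, all the tree-theoretic case analysis you set up becomes unnecessary: that work is already packaged inside Lemma \ref{hyperbolic}.
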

\begin{proof}
Since $N$ is a pro-$p$ group from the class $\mathcal{L}$ by
Corollary \ref{infinity} $N$ is of type $FP_{\infty}$. Then by the
main result of \cite{ThomasPavel} there is a finite index subgroup
$G_0$ of $H$ such that $G_0$ contains $N$ and $cd(G_0/ N) <
\infty$. In particular $G_0 / N$ is non-trivial and torsion-free.

Let $g \in G_0 \setminus N$. The group $T = N . \overline{\langle
g \rangle}$ is a pro-$p$ group from the class $\mathcal{L}$,  so
by Corollary \ref{infinity} is  of type $FP_{\infty}$ and $T/ N
\simeq \BZ_p$. By Lemma \ref{hyperbolic} $T$ is abelian and since
there was no restriction on the choice of $g$ in $G_0 \setminus N$
we get that $N \subseteq Z(G_0)$. By Corollary \ref{centerless}
$G_0$ is abelian and by Corollary \ref{virtualabelian}  $H$ is
abelian.
\end{proof}
\begin{cor} Let $H$ be a pro-$p$ group from the class $\mathcal{L}$ with a
free pro-$p$ subgroup $F$ of rank $2$. Then $F = N_H(F)$.
\end{cor}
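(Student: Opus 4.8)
The plan is to reduce the statement to the preceding Theorem~\ref{normalabelian} by localizing to a finitely generated subgroup. Since $F\subseteq N_H(F)$ always holds, it suffices to show that every $g\in N_H(F)$ already lies in $F$. Fix such a $g$ and set $K=\overline{\langle F,g\rangle}$. Because $F$ is topologically $2$-generated, $K$ is topologically generated by three elements, hence is a finitely generated pro-$p$ subgroup of $H$ and therefore belongs to the class $\mathcal{L}$; moreover $F$ is a non-trivial finitely generated closed normal subgroup of $K$. The point of passing to $K$ rather than to $N_H(F)$ itself is that the full normalizer need not obviously be finitely generated, so it need not a priori lie in $\mathcal{L}$.

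First I would dispose of the case where $F$ has infinite index in $K$. Applying Theorem~\ref{normalabelian} to $K\in\mathcal{L}$ with its finitely generated normal subgroup $F$ of infinite index forces $K$ to be abelian. But then $F$ would be abelian, contradicting the fact that a free pro-$p$ group of rank $2$ is non-abelian. Hence $[K:F]<\infty$, i.e.\ $F$ is an open subgroup of $K$.

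Next I would exploit that $K$ is now virtually free pro-$p$. By Lemma~\ref{torsion} the group $K$ is torsion-free, so by Serre's theorem on the cohomological dimension of profinite groups together with the characterization of free pro-$p$ groups as those of cohomological dimension at most $1$ (see \cite{PavelRibesbook}) we get $cd(K)=cd(F)=1$, whence $K$ is a free pro-$p$ group of finite rank. Applying the Nielsen--Schreier index formula to the open subgroup $F$ of the free pro-$p$ group $K$ gives
$$d(F)-1=[K:F]\,(d(K)-1),$$
that is $1=[K:F]\,(d(K)-1)$. Since both factors are non-negative integers whose product is $1$, this forces $[K:F]=1$ (and incidentally $d(K)=2$). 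Therefore $g\in K=F$, and as $g\in N_H(F)$ was arbitrary we conclude $N_H(F)=F$.

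The main obstacle I anticipate is not the index computation but the justification that $K$ is genuinely free pro-$p$ once $F$ sits inside it with finite index: this rests on combining the torsion-freeness supplied by Lemma~\ref{torsion} with the pro-$p$ form of Serre's cohomological argument, and it is exactly the step where the hypothesis that $F$ has rank precisely $2$ (so $d(F)-1=1$) is used to collapse the Schreier formula. Everything else is a clean dichotomy governed by Theorem~\ref{normalabelian}.
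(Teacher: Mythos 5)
Your proposal is correct and follows essentially the same route as the paper: pass to the finitely generated subgroup $K=\overline{\langle F,g\rangle}\in\mathcal{L}$, use Theorem~\ref{normalabelian} to rule out infinite index of $F$ in $K$, then invoke torsion-freeness (Lemma~\ref{torsion}) together with Serre's theorem to conclude $K$ is free pro-$p$, and finish with the Schreier index formula. The only cosmetic difference is that the paper phrases the Schreier step as a contradiction ($d(K)<2$ forces $F$ procyclic) while you read off $[K:F]=1$ directly; the content is identical.
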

\begin{proof}
Suppose that $F \not= N_H(F)$. Let $g \in N_H(F) \setminus F$ and
define $G$ as the pro-$p$ subgroup of $H$ generated by $F$ and
$g$. If $G / F$ is infinite then $G$ is a non-abelian  pro-$p$
group from the class $\mathcal{L}$ with a finitely generated
normal pro-$p$ subgroup of infinite index, in contradiction to
Theorem \ref{normalabelian}. Thus $F$ is open in $G$ and by
Serre's result \cite{S-65} $G$ is a free pro-$p$ group. Since $F
\not= G$ the Schreier formula yields  $2= d(F) > d(G)$, so $G$ is
procyclic and $F$ is procyclic, a contradiction.
\end{proof}
\begin{theorem} Let $G$ be a pro-$p$ group from the class $\mathcal{L}$ and
$H$ a non-trivial finitely generated subgroup of $G$. Then
$[N_G(H):H]$ is finite unless $N_G(H)$ is abelian.\end{theorem}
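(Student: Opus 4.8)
The plan is to prove the equivalent statement: writing $N=N_G(H)$, if $N$ is not abelian then $[N:H]<\infty$. Since $H$ is by definition a closed normal subgroup of $N$, the strategy is to feed finitely generated overgroups of $H$ inside $N$ into Theorem \ref{normalabelian}, and then to apply the finiteness result of \cite{ThomasPavel} that was already used in the proof of that theorem. The first reduction is to the case where $H$ itself is non-abelian. If $N$ is non-abelian pick $a,b\in N$ with $[a,b]\neq 1$; then $K=\overline{\langle H,a,b\rangle}$ is finitely generated, hence lies in $\mathcal{L}$, is non-abelian, and contains $H$ as a non-trivial finitely generated normal subgroup. Theorem \ref{normalabelian} forbids $H$ from having infinite index in $K$ (that would force $K$ abelian), so $[K:H]<\infty$; and then Corollary \ref{virtualabelian} rules out $H$ being abelian, since otherwise $K$ would be virtually abelian and therefore abelian.

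Next I would show that $N/H$ is a torsion pro-$p$ group. For every $g\in N$ the group $\overline{\langle H,g\rangle}$ is finitely generated, lies in $\mathcal{L}$, and contains the non-abelian group $H$ as a normal subgroup, so it is itself non-abelian. Applying Theorem \ref{normalabelian} once more gives $[\overline{\langle H,g\rangle}:H]<\infty$, that is $g^{p^m}\in H$ for some $m$. Hence every element of $N/H$ has finite order.

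To finish I would invoke the finiteness theorem of \cite{ThomasPavel}. The normalizer $N=N_G(H)$ is a closed subgroup of $G$, so it inherits finite cohomological dimension from Lemma \ref{torsion}, and by Corollary \ref{infinity} the normal subgroup $H$ is of type $FP_{\infty}$. Thus \cite{ThomasPavel} yields an open subgroup $P_0$ of $N$ with $H\subseteq P_0$ and $cd(P_0/H)<\infty$. A pro-$p$ group of finite cohomological dimension is torsion-free, so $P_0/H$ is torsion-free; but $P_0/H$ is a subgroup of the torsion group $N/H$, and therefore $P_0/H=1$, i.e. $P_0=H$. Consequently $[N:H]=[N:P_0]<\infty$, as required.

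The step I expect to be the main obstacle is the last one: one must check that the main result of \cite{ThomasPavel} can be applied with ambient group the normalizer $N$, which a priori need not be finitely generated. The argument above goes through provided that result only requires finiteness of the cohomological dimension of the ambient pro-$p$ group (which $N$ has) together with the $FP_{\infty}$ property of the normal subgroup, rather than finite generation of the ambient group. Should finite generation of the ambient group be needed, I would first prove separately that $N_G(H)$ is finitely generated: once $H$ is non-abelian, Theorem \ref{centralizers} gives $C_G(H)=1$ (any non-trivial $c\in C_G(H)$ would put the non-abelian $H$ inside the abelian group $C_G(\overline{\langle c\rangle})$), so conjugation embeds $N/H$ into $\mathrm{Out}(H)$; combining this embedding with the fact that $N/H$ is torsion should force $N/H$, and hence $N$, to be finitely generated, after which Theorem \ref{normalabelian} applies to $N$ directly.
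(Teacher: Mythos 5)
Your proposal is correct, and its overall skeleton coincides with the paper's: reduce to the case where $N/H$ is torsion, then apply the Weigel--Zalesskii theorem \cite{ThomasPavel} to the pair $H \trianglelefteq N$ to produce a finite index subgroup $P_0 \supseteq H$ of $N$ with $P_0/H$ both torsion-free and torsion, forcing $P_0 = H$ and hence $[N:H]<\infty$. The difference lies in how the torsion reduction is achieved. The paper argues by contradiction with a case split: if some $t \in N$ had infinite order modulo $H$, then Lemma \ref{hyperbolic} applied to $\overline{\langle H,t\rangle}$ makes $H$ abelian, whence every element of $N$ centralizes $H$ (by Lemma \ref{hyperbolic} or Corollary \ref{virtualabelian} according to the order of the element modulo $H$), and transitive commutativity then makes $N$ abelian, a contradiction. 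You instead run everything through Theorem \ref{normalabelian}: first to show that $H$ itself is non-abelian (via $K=\overline{\langle H,a,b\rangle}$ together with Corollary \ref{virtualabelian}), and then to show that each $\overline{\langle H,g\rangle}$ contains $H$ with finite index, i.e. that $N/H$ is torsion. This is a legitimate and slightly cleaner reuse of the stronger theorem already available at this point in the paper (Theorem \ref{normalabelian} is itself deduced from Lemma \ref{hyperbolic}), at the cost of establishing the intermediate fact, not needed by the paper, that $H$ is non-abelian. Finally, your worry about \cite{ThomasPavel} is unfounded: that result needs only $cd(N)<\infty$ and the normal subgroup to be of type $FP_\infty$, not finite generation of the ambient group, and indeed the paper applies it exactly as you do, with ambient group $N=N_G(H)$ which is not known a priori to be finitely generated. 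So your backup plan is unnecessary --- which is just as well, since its final step (that a torsion group $N/H$ embedding in $\mathrm{Out}(H)$ must be finitely generated) is unjustified as stated.
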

\begin{proof} Suppose on the contrary that $N_G(H)$ is non-abelian and
$[N_G(H):H]$ is infinite. Put $N=N_G(H)$ and suppose that $N/ H$
is not torsion. Let $t \in N \setminus H$. Suppose first that $t$
has infinite order modulo $H$ (note that we have supposed that
such $t$ exists). Then the closed subgroup of $N$ generated by $H$
and $t$ has a normal closed finitely generated subgroup $H$ and so
by Lemma \ref{hyperbolic} is abelian. In particular $H$ is
abelian.  If $t$ has finite order modulo $H$ then by Corollary
\ref{virtualabelian} the closed subgroup of $N$ generated by $H$
and $t$ is abelian, so $[t, H]= 1$. Since $H$ is non-trivial, by
transitive commutativity any two elements of $N$ commute, so $N$
is abelian, a contradiction.

Thus we can suppose from now on  that $N/H$ is an infinite torsion
pro-$p$ group. Since $H$ is a pro-$p$ group from the class
$\mathcal{L}$ by Corollary \ref{infinity} $H$ is of type
$FP_{\infty}$. By Lemma \ref{torsion} every group from the class
$\mathcal{L}$ has finite cohomological dimension and hence every
subgroup of a group from the class $\mathcal{L}$ has finite
cohomological dimension. In particular $cd(N) < \infty$.
 Then by the main result of \cite{ThomasPavel} there is a finite index
subgroup $G_0$ of $N$ such that $G_0$ contains $H$ and $cd(G_0/ H)
< \infty$. In particular $G_0 / H$ is torsion-free and hence $G_0
= H$, a contradiction to the fact that $N/H$ is infinite. This
completes the proof.\end{proof}
\section{$2$-generated pro-$p$ groups in the class $\mathcal{L}$}\label{two
generated}
\begin{lemma} \label{intersection} Let $Q$ be a non-trivial free abelian
pro-$p$ group of finite rank and $Y$ be the set of all
epimorphisms of pro-$p$ groups $\varphi : Q \to Q_1 = \BZ_p$. Let
$\hat{\varphi}$ be the continuous ring homomorphism $\BF_p[[Q]]
\to \BF_p[[Q_1]]$ induced by $\varphi$. Then $\cap_{\varphi \in Y}
Ker(\hat{\varphi}) = 0$.
\end{lemma}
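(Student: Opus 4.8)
The plan is to identify the completed group algebra with a power series ring and to detect a given nonzero element on a single, carefully chosen cyclic quotient. Fix a $\BZ_p$-basis $x_1,\dots,x_n$ of $Q\cong\BZ_p^n$ and set $t_i=x_i-1$; then $\BF_p[[Q]]\cong\BF_p[[t_1,\dots,t_n]]$, the ring of formal power series in $n$ commuting variables, which is in particular an integral domain. It suffices to show that for every $f\neq 0$ there is some $\varphi\in Y$ with $\hat\varphi(f)\neq 0$. Writing $s=y-1$, so that $\BF_p[[Q_1]]\cong\BF_p[[s]]$, an epimorphism $\varphi$ is prescribed by $\varphi(x_i)=y^{a_i}$ for a primitive vector $(a_1,\dots,a_n)\in\BZ_p^n$ (primitive meaning some $a_i$ is a unit, which is equivalent to surjectivity of $\varphi$), and then $\hat\varphi(t_i)=(1+s)^{a_i}-1$.

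The key observation is the Frobenius identity $(1+s)^{p^e}=1+s^{p^e}$ in $\BF_p[[s]]$. Taking $a_i=p^{e_i}$ with integers $e_1=0<e_2<\cdots<e_n$ (the choice $e_1=0$ makes $a_1$ a unit, so $\varphi$ is onto) gives the \emph{exact} formula $\hat\varphi(t_i)=s^{w_i}$, where $w_i=p^{e_i}$, with no higher order terms. Thus $\hat\varphi$ becomes the weighted monomial substitution $t_i\mapsto s^{w_i}$, and for $f=\sum_\mu c_\mu t^\mu$ (multi-index notation, $c_\mu\in\BF_p$) continuity yields
\[
\hat\varphi(f)=\sum_{\mu}c_\mu\, s^{\langle\mu,w\rangle},\qquad \langle\mu,w\rangle=\sum_i \mu_i w_i .
\]
Since $w_i\ge 1$ there are only finitely many $\mu$ with $\langle\mu,w\rangle=N$ for each $N$, so the coefficient of $s^N$ in $\hat\varphi(f)$ is the finite sum $\sum_{\langle\mu,w\rangle=N}c_\mu$, and $\hat\varphi(f)\neq 0$ as soon as one such sum is nonzero.

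It therefore suffices to choose the exponents $e_i$ so that the linear form $\mu\mapsto\langle\mu,w\rangle$ attains its minimum $N$ over $\mathrm{supp}(f)$ at a \emph{unique} monomial $\mu^{*}$; its coefficient is then $c_{\mu^{*}}\neq 0$. The minimum of $\langle\cdot,w\rangle$ over the Newton polyhedron $\Delta=\mathrm{conv}(\mathrm{supp}\,f)+\BR^n_{\ge 0}$ is attained at a vertex lying in $\mathrm{supp}(f)$, and equals the minimum over $\mathrm{supp}(f)$ since $w>0$; it is the unique such vertex precisely when $w$ is orthogonal to none of the \emph{finitely many} bounded edge directions $\nu\in\BZ^n\setminus\{0\}$ of $\Delta$. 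For such a $\nu$, letting $m$ be its largest index with $\nu_m\neq 0$, we have $\langle\nu,w\rangle=\nu_m p^{e_m}+\sum_{i<m}\nu_i p^{e_i}$, and making the gap $e_m-e_{m-1}$ large compared with the bounded quantity $\max_j\max_i\abs{\nu^{(j)}_i}$ forces the top term to dominate, so $\langle\nu,w\rangle\neq 0$. Hence a single super-increasing choice of $e_2<\cdots<e_n$ avoids all of these finitely many orthogonality conditions, produces a unique minimizer, and gives $\hat\varphi(f)\neq 0$.

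The step I expect to be the crux is precisely the explanation of why one cannot argue more cheaply by a single reduction mod $p$. If one took all $a_i$ to be units, the coefficient governing $\hat\varphi(f)$ at its lowest $s$-degree would be $f_d(\bar a_1,\dots,\bar a_n)$, with $f_d$ the initial form of $f$ and $\bar a_i$ the residues; but over the finite field $\BF_p$ a nonzero homogeneous polynomial may vanish at every point of $\BF_p^n$ (for instance $t_1^p t_2-t_1 t_2^p$), so this crude estimate need not detect $f$. The device above circumvents this by exploiting the $p$-adic valuations $v_p(a_i)$ — not merely the residues $\bar a_i$ — to give the images $\hat\varphi(t_i)$ pairwise distinct $s$-orders $p^{e_i}$, turning $\hat\varphi$ into a weighted monomial substitution for which a generic weight isolates a single surviving term. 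The only genuine work is then the elementary combinatorial fact that a super-increasing vector of $p$-powers can be put in general position with respect to the finitely many edges of the Newton polyhedron.
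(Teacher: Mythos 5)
Your proof is correct, and it shares with the paper exactly one key device: in characteristic $p$ the epimorphism sending the generators to $p$-power powers of the target generator induces, by Frobenius, an \emph{exact} monomial substitution $t_i\mapsto s^{p^{e_i}}$ with no higher-order terms. But the way you exploit this is genuinely different from the paper. The paper first generalizes the statement to power series over an arbitrary pro-$p$ coefficient ring $R$, reduces by induction on the rank (writing $R[[t_1,\ldots,t_n]]=R[[t_n]][[t_1,\ldots,t_{n-1}]]$) to the two-variable case, and there considers the whole countable family $\theta_i\colon t_2\mapsto t_1^{p^i}$ at once: it proves the ideal identity $\bigcap_{1\le i\le j}\ker\theta_i=\prod_{1\le i\le j}\bigl(t_2-t_1^{p^i}\bigr)$, observes that this product lies in the $j$-th power of the augmentation ideal, and concludes from $\bigcap_j I^j=0$ --- so no single map ever ``detects'' a given element; the intersection is killed globally by ideal theory. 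You instead fix a nonzero $f$ and build \emph{one} epimorphism (super-increasing exponents $e_1=0<e_2<\cdots<e_n$, put in general position with respect to the finitely many bounded edge directions of the Newton polyhedron of $f$) for which a unique minimal-weight term of $f$ survives, so $\hat\varphi(f)\neq 0$; this avoids both the induction on rank and the ideal-theoretic identity, at the cost of some convex geometry. What each approach buys: the paper's argument is shorter on combinatorics and handles all ranks uniformly through a clean algebraic induction; yours is constructive per element, works directly in $n$ variables, and your closing remark --- that taking all exponents to be units must fail because a nonzero form such as $t_1^pt_2-t_1t_2^p$ vanishes identically on $\BF_p^n$ --- isolates precisely why the valuation (Frobenius) trick, common to both proofs, is unavoidable. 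One small point you should make explicit: the finiteness of the set of bounded edges of $\Delta=\mathrm{conv}(\mathrm{supp}f)+\BR^n_{\ge 0}$ rests on Dickson's lemma ($\mathrm{supp}f$ has finitely many minimal elements under the componentwise order, so $\Delta$ is an honest polyhedron); this is standard but is the one unproved assertion your argument leans on.
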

\begin{proof} We prove by induction on $n$ the following more general
statement. Let $R$ be  a pro-$p$ ring of characteristic $p$,
$R[[t_1, \ldots, t_n]]$ be the commutative ring of formal power
series and $Q_{R,n}$ be the closed group generated by $\{ 1 + t_i
\}_{1 \leq i\leq n}$. Consider the set $W$ of all homomorphism of
pro-$p$ rings $\widehat{\varphi} : R[[t_1, \ldots, t_n]] \to
R[[t_1]]$ that induce an epimorphsim of pro-$p$ groups $\varphi :
Q_{R, n} \to Q_{R,1}$. We claim that $$\cap_ {\widehat{\varphi}
\in W } ker \widehat{\varphi}  = 0.$$ By writing $R[[t_1, \ldots,
t_n]] $ as $R[[t_n]][[t_1, \ldots, t_{n-1}]]$ we see that to prove
the inductive step is sufficient to consider the case $n = 2$.

Assume from now that $n = 2$ and define $\theta_i  : R[[t_1, t_2]]
\to R[[t_1]]$ by $\theta_i(t_1) = t_1$ and $\theta_i(t_2) =
t_1^{p^i}$. Thus $\theta_i(1 + t_2) = (1 + t_1)^{p^i}$ and
$\theta_i$ sends $Q_{R, 2}$ surjectively to $Q_{R, 1}$. We prove
by induction on $j$ the following claim : for $ker( \theta_i) =
(t_2 - t_1^{p^i}) R[[t_1, t_2]]$
\begin{equation} \label{999}
\cap_{1 \leq i \leq j} ker (\theta_i) = \prod_{1 \leq i \leq j}
ker (\theta_i)
\end{equation}
Indeed if $f$ is in the left hand side of (\ref{999}), then by
induction $f = (\prod_{1 \leq i \leq j-1} (t_2 - t_1^{p^i})) f_0$
for some $f_0 \in  R[[t_1, t_2]]$. Since $\theta_j(f) = 0$,
$\theta_j (t_2 -  t_1^{p^i}) \not= 0$ for $ 1 \leq i \leq j-1$ and
$R[[t_1]]$ is a domain we deduce that $f_0 \in Ker(\theta_j) =
(t_2 - t_1^{p^j}) R[[t_1, t_2]]$.  This completes the proof of the
claim.

Finally let $I$ be the augmentation ideal of $R[[t_1, t_2]]$. By
the claim $\cap_{1 \leq i \leq j} ker (\theta_i) \subseteq I^j$
and $\cap_{j \geq 1} I^j = 0$. Thus $\cap_{i \geq 1} ker
(\theta_i) = 0$.
 \end{proof}
\begin{lemma} \label{nakayama} Let $1 \to F \to G \to Q \to 1$ be a short
exact sequence of pro-$p$ groups with $Q$ non-trivial pro-$p$
abelian torsion-free, $G$ finitely generated and $F$ free pro-$p$,
1-generated as a closed normal subgroup of $G$. Then either $G$
has a pro-$p$ subgroup $N$ such that $G / N \simeq \BZ_p$ and $N$
is finitely generated or $F / F' F^p \simeq \BF_p[[Q]]$ and  so
there is a free pro-$p$ subgroup $F_0$ of $G$ such that $F \subset
F_0$, $F_0 / F \simeq \BZ_p$ and $G / F_0$ is torsion-free.
\end{lemma}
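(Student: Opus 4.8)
The plan is to encode the hypotheses in the $\BF_p[[Q]]$-module $V := F/F'F^p = H_1(F,\BF_p)$, on which $Q = G/F$ acts by conjugation (inner automorphisms of $F$ act trivially on $H_1$). If $x\in F$ generates $F$ as a closed normal subgroup of $G$, then the conjugates of its image generate $V$, so $V$ is a cyclic $\BF_p[[Q]]$-module and $V\simeq \BF_p[[Q]]/I$ for some closed ideal $I$. Everything then turns on the dichotomy $I\neq 0$ versus $I=0$: I will show that the first alternative produces the finitely generated $N$ with $G/N\simeq\BZ_p$ of (a), while the second forces $V\simeq \BF_p[[Q]]$ and lets me construct $F_0$, giving (b).

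Suppose first $I\neq 0$. Since $G$ is finitely generated so is its quotient $Q$, and being abelian and torsion-free it is isomorphic to $\BZ_p^r$ with $1\leq r<\infty$; thus Lemma \ref{intersection} applies and gives $\bigcap_\varphi \ker(\hat\varphi)=0$ as $\varphi$ runs over the epimorphisms $Q\to\BZ_p$. Hence there is one such $\varphi$, with image $Q_1\simeq\BZ_p$, for which $\hat\varphi(I)\neq 0$. Let $\pi:G\to Q\to Q_1$ be the composite and put $N=\ker\pi$, so $G/N\simeq\BZ_p$, $F\subseteq N$, and $N/F=\ker\varphi\simeq\BZ_p^{r-1}$. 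I then read off finite generation of $N$ from the five-term exact sequence of $1\to F\to N\to \ker\varphi\to 1$ in $\BF_p$-homology, whose outer terms are $H_1(F,\BF_p)_{\ker\varphi}$ and the finite group $H_1(\ker\varphi,\BF_p)$. The coinvariants identify as $V\otimes_{\BF_p[[Q]]}\BF_p[[Q_1]]\simeq \BF_p[[Q_1]]/\hat\varphi(I)$, and since $\BF_p[[Q_1]]\simeq\BF_p[[t]]$ is a discrete valuation ring every nonzero closed ideal has finite codimension; so $H_1(N,\BF_p)$ is finite and $N$ is finitely generated, which is alternative (a).

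Now suppose $I=0$, so $V\simeq \BF_p[[Q]]$, which is the first assertion of (b). To build $F_0$ I choose a procyclic direct summand $D\simeq\BZ_p$ of $Q$, so that $Q/D$ is torsion-free, and let $F_0$ be the full preimage of $D$ in $G$; then $F\subset F_0$, $F_0/F=D\simeq\BZ_p$ and $G/F_0\simeq Q/D$ is torsion-free. It remains to see that $F_0$ is free pro-$p$, for which it suffices to check $cd(F_0)\leq 1$, i.e. $H^2(F_0,\BF_p)=0$. Feeding $1\to F\to F_0\to D\to 1$ into the Lyndon--Hochschild--Serre spectral sequence and using $cd(F)\leq 1$ and $cd(D)=1$ collapses $H^2(F_0,\BF_p)$ onto $H^1(D,H^1(F,\BF_p))$. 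As $D\simeq\BZ_p$, this is the $D$-coinvariants of $V^\ast=H^1(F,\BF_p)$, which by Pontryagin duality is dual to the invariants $V^D=\ker(\sigma-1:V\to V)$ for a generator $\sigma$ of $D$; since $V\simeq\BF_p[[Q]]$ is an integral domain and $\sigma-1\neq 0$, this kernel vanishes. Hence $H^2(F_0,\BF_p)=0$, so $F_0$ is free, completing (b).

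The step I expect to be the main obstacle is the finite-generation argument when $I\neq 0$: the whole content is to convert the purely ring-theoretic non-vanishing $\hat\varphi(I)\neq 0$ delivered by Lemma \ref{intersection} into finiteness of $H_1(N,\BF_p)$, and this requires both the coinvariants identification $V_{\ker\varphi}\simeq \BF_p[[Q_1]]/\hat\varphi(I)$ and the observation that $\BF_p[[t]]$ modulo a nonzero closed ideal is finite. By comparison the freeness of $F_0$ is routine once the spectral sequence reduces it to the vanishing of $V^D$, which is immediate from $V$ being a domain.
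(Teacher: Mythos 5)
Your proposal is correct, and its skeleton---passing to the cyclic $\BF_p[[Q]]$-module $V=F/F'F^p\simeq\BF_p[[Q]]/I$ and splitting on $I\neq 0$ versus $I=0$---is exactly the paper's. In the case $I\neq 0$ the two arguments are essentially the same: both invoke Lemma \ref{intersection} to produce $\varphi$ with $\hat\varphi(I)\neq 0$ and both identify the $\ker(\varphi)$-coinvariants of $V$ with the finite ring $\BF_p[[Q_1]]/\hat\varphi(I)$; the only difference is at the end, where the paper applies the topological Nakayama lemma to conclude that $V$ is finitely generated over $\BF_p[[\ker\varphi]]$ (and then, implicitly via a Frattini-type argument, that the preimage $N$ is finitely generated), while you read off $d(N)=\dim_{\BF_p}H_1(N,\BF_p)<\infty$ directly from the five-term exact sequence in $\BF_p$-homology---an equivalent and slightly more self-contained finish. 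The genuine divergence is in the case $V\simeq\BF_p[[Q]]$: the paper proves freeness of $F_0$ constructively, observing that $V$ is a free $\BF_p[[\overline{\langle q\rangle}]]$-module, lifting a module basis $Z$ to a closed subset $\widetilde Z$ of $F$, translating it by conjugation under $\overline{\langle g\rangle}$ to obtain a $\overline{\langle g\rangle}$-invariant basis of $F$, and exhibiting $\widetilde Z\cup\{g\}$ as an explicit basis of $F_0$; you instead prove $H^2(F_0,\BF_p)=0$ via the Lyndon--Hochschild--Serre spectral sequence (which collapses onto $H^1(D,H^1(F,\BF_p))$ since $cd(F)\leq 1$ and $cd(D)=1$), the identification $H^1(\BZ_p,A)=A_{\BZ_p}$ for discrete torsion modules, Pontryagin duality, and the fact that $\BF_p[[Q]]$ is a domain, and then invoke Serre's characterization of free pro-$p$ groups as those of cohomological dimension at most one. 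Both routes are sound; yours is shorter and sidesteps the delicate lifting step (one must verify that a closed lift of a module basis, translated by $\overline{\langle g\rangle}$, really is a basis of $F$ as a free pro-$p$ group), whereas the paper's construction yields more information, namely the explicit $\overline{\langle g\rangle}$-orbit structure on a basis of $F$ and hence the precise way $F_0$ decomposes as a semidirect product, at the price of a more hands-on argument.
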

\begin{proof} Since $F$ is generated by one element as a closed normal
subgroup of $G$ we have that $V = F / F' F^p$ is a cyclic pro-$p$
$\BF_p[[Q]]$-module, where $Q$ acts by conjugation.

Suppose first that $V$ is not free as a pro-$p$
$\BF_p[[Q]]$-module. Then $V \simeq \BF_p[[Q]] / I$ where $I$ is
some closed ideal in $\BF_p[[Q]]$. Let $\varphi : Q \to Q_1 =
\BZ_p$ be a surjective homomorphism of pro-$p$ groups and
$\hat{\varphi}$ be the continuous ring homomorphism $\BF_p[[Q]]
\to \BF_p[[Q_1]]$ induced by $\varphi$. By Lemma
\ref{intersection}  $\cap Ker(\hat{\varphi}) = 0$ when $\varphi$
runs through all such epimorphisms.  In particular there is
$\varphi$ such that $\hat{\varphi}(I) \not= 0$. Then $V
\widehat{\otimes}_{\BF_p[[Ker(\varphi)]]} \BF_p  \simeq
(\BF_p[[Q]]/ I) \widehat{\otimes}_{\BF_p[[Ker(\varphi)]]} \BF_p
\simeq \BF_p[[Q_1]] / \hat{\varphi}(I)$ is finite and by
Nakayama's lemma $V$ is finitely generated as a pro-$p$
$\BF_p[[Ker(\varphi)]]$-module. Then the preimage $N$ of
$ker(\varphi)$ in $G$ is finitely generated as a pro-$p$ group and
$G / N \simeq \BZ_p$.

Suppose now that $V \simeq \BF_p[[Q]]$. Take any element $q \in Q$
such that the pro-$p$ subgroup of $Q$ generated by $q$ is a direct
factor in $Q$. Let $g$ be an element of the preimage of $q$ in
$G$. Then the pro-$p$ subgroup $F_0$ of $G$ generated by $F$ and
$g$ is free pro-$p$. Indeed $V \simeq \BF_p[[Q]]$ is also a free
$\BF_p[[\overline{\langle q\rangle}]]$-module with the basis $Z$,
where $Z \times \overline{\langle g \rangle} = Q$. Lifting $Z$ to
a closed subset $\widetilde Z$ of $F$ and translating (i.e.
conjugating) it by $\overline{\langle g\rangle}$ we obtain a
closed $\overline{\langle g\rangle}$-invariant basis $X$ of $F$ as
a free pro-$p$ group where the image of $X$ in $V$ is $\{ Q \}$.
Then $F_0$ is a free pro-$p$ group with a basis $\widetilde Z \cup
\{ g \}$.
\end{proof}
\begin{theorem} Every $2$-generated pro-$p$ group from the class
$\mathcal{L}$ is either free pro-$p$ or abelian.
\end{theorem}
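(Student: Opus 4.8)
The plan is to fix, using Proposition \ref{freenilpotent}, a short exact sequence $1 \to F \to G \to Q \to 1$ with $F$ free pro-$p$ and $Q$ torsion-free poly-procyclic, and to show that if $G$ is non-abelian then $G$ is free pro-$p$ (the abelian alternative being trivial). If $Q = 1$ we are done, so assume $Q \neq 1$, and hence $F$ has infinite index. First I would dispose of the case that $F$ is finitely generated: if $F \neq 1$ it would be a non-trivial finitely generated normal subgroup of infinite index, forcing $G$ abelian by Theorem \ref{normalabelian}, while $F = 1$ would make $G = Q$ soluble, hence abelian by Corollary \ref{soluble}; both contradict our assumption, so $F$ is infinitely generated. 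The whole argument is then an induction on the Hirsch length $h(Q)$, and the inductive step reduces to a single task: \emph{produce a free normal subgroup $F' \supseteq F$ of $G$ with $G/F'$ torsion-free poly-procyclic of strictly smaller Hirsch length}. Since $G$ itself is unchanged, the induction hypothesis applied to the new decomposition $1 \to F' \to G \to G/F' \to 1$ then yields that $G$ is free (it cannot be abelian).

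The heart of the matter is the case $Q$ abelian, where this task is solved by Lemma \ref{nakayama}. Because $d(G) = 2$ we have $d(Q) \leq 2$, so $Q \cong \BZ_p^m$ with $m \in \{1,2\}$. Feeding the sequence into the five-term homology sequence
\[ H_2(Q,\BF_p) \to (F/F'F^p)_Q \to H_1(G,\BF_p) \to H_1(Q,\BF_p) \to 0 \]
and using $\dim_{\BF_p} H_1(G,\BF_p) = 2$ together with $\dim_{\BF_p} H_2(\BZ_p^m,\BF_p) = \binom{m}{2} \leq 1$, a short computation shows that the coinvariant space $(F/F'F^p)_Q$ is one-dimensional. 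As $G$ is finitely generated, $F/F'F^p$ is a finitely generated $\BF_p[[Q]]$-module, so Nakayama's lemma forces it to be cyclic, i.e. $F$ is $1$-generated as a closed normal subgroup. Lemma \ref{nakayama} now applies: its first alternative produces a finitely generated $N$ with $G/N \cong \BZ_p$, whence $G$ is abelian by Lemma \ref{hyperbolic}, a contradiction; therefore its second alternative holds, giving a free $F_0 \supset F$ with $F_0/F \cong \BZ_p$ and $G/F_0$ torsion-free. Since $G/F_0$ is $\BZ_p^m$ modulo a rank-one subgroup with torsion-free quotient, it is $\BZ_p^{m-1}$, so $F' = F_0$ has the required properties and the induction closes.

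The main obstacle is that Proposition \ref{freenilpotent} only delivers $Q$ torsion-free poly-procyclic, and for a $2$-generated $G$ this $Q$ can genuinely be non-abelian — a free pro-$p$ group of rank $2$ already surjects onto the Heisenberg pro-$p$ group, realizing a non-abelian $Q$ for which $(F/F'F^p)_Q$ is two-dimensional, so Lemma \ref{nakayama} (which needs $Q$ abelian and $F$ cyclic as a normal subgroup) does not apply to the given presentation. To overcome this I would replace the presentation by an abelian one: let $\bar{A} \cong \BZ_p^t$ ($1 \leq t \leq 2$, using that a non-trivial group in $\mathcal{L}$ has infinite abelianization) be the maximal torsion-free abelian quotient of $G$, and set $\hat{F} = \ker(G \to \bar{A})$. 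Then $\hat{F} \in \mathcal{L}$ is non-abelian (else $G$ is metabelian, hence abelian by Corollary \ref{soluble}) and is free-by-(poly-procyclic of smaller Hirsch length). The crux is to prove that $\hat{F}$ is itself free, for then $1 \to \hat{F} \to G \to \bar{A} \to 1$ is an abelian-quotient presentation handled by the previous paragraph. Establishing freeness of $\hat{F}$ — equivalently, exhibiting an epimorphism $G \to \BZ_p$ with free kernel — is where the real difficulty lies, and I expect to settle it by combining the Hirsch-length induction with Lemma \ref{hyperbolic} (to rule out finitely generated kernels) and the pro-$p$ tree techniques of Section \ref{preliminaries}, much as in the proof of Lemma \ref{hyperbolic}.
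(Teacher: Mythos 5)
Your treatment of the abelian-quotient case is correct, and it coincides with the computational core of the paper's own proof (its Cases 2 and 3): one-generation of the kernel as a closed normal subgroup, Lemma \ref{nakayama}, and then Lemma \ref{hyperbolic} and Theorem \ref{normalabelian} to kill the first alternative. But the reduction to that case --- the last paragraph of your proposal --- is a genuine gap, and it is where the whole difficulty of the theorem sits. You need $\hat F=\ker(G\to\bar A)$ to be free pro-$p$, equivalently an epimorphism from $G$ onto a non-trivial torsion-free abelian group with free kernel, and you offer only the statement that you ``expect to settle it'' by Lemma \ref{hyperbolic} and pro-$p$ tree techniques. None of these tools does the job: Lemma \ref{hyperbolic} only excludes \emph{finitely generated} kernels, while for non-abelian $G$ the subgroup $\hat F$ is necessarily non-trivial and infinitely generated (by Theorem \ref{normalabelian}, a non-trivial finitely generated normal subgroup of infinite index would force $G$ abelian); in particular $\hat F$ is \emph{not} in $\mathcal{L}$, contrary to your assertion, since membership in $\mathcal{L}$ requires finite generation, so none of the paper's results about $\mathcal{L}$ can be applied to $\hat F$. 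Note also that what you defer is essentially as strong as the theorem itself: once $G$ is known to be free, every closed subgroup, in particular $\hat F$, is automatically free; so your reduction postpones exactly the content that has to be proved.

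The paper sidesteps this obstacle by never attempting to improve an abstract free-by-(poly-procyclic) decomposition of $G$; instead it inducts on the minimal number of generators of the \emph{ambient} group $G_n=G_{n-1}\amalg_C A$, where $A=C\times B$. Choosing a direct summand $D\cong\BZ_p$ of $B$, the epimorphism $\varphi\colon G_n\to G_{n-1}\amalg_C(A/D)$ which is the identity on $G_{n-1}$ and the projection on $A$ has \emph{free} kernel $M$ by Theorem \ref{strongKurosh}, and $\varphi(H)$ is a $2$-generated subgroup of a group of the class with strictly fewer generators, hence free or abelian by induction. If $\varphi(H)$ is free of rank $2$, then $H\cong\varphi(H)$ because $H$ is $2$-generated; if $\varphi(H)$ is trivial, then $H\le M$ is free; and if $\varphi(H)\cong\BZ_p$ or $\BZ_p^2$, then $1\to M\cap H\to H\to\varphi(H)\to 1$ is precisely the abelian-quotient situation you already handle via Lemma \ref{nakayama}. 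Thus the explicit splitting of the ambient group, through Theorem \ref{strongKurosh}, hands you the free-kernel epimorphism that your outline cannot manufacture from properties of $G$ alone; this is the missing idea.
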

\begin{proof} Let $H$ be a $2$-generated pro-$p$ subgroup of $G_n \in
\mathcal{G}_n$ such that $H$ is not procyclic. We prove by
induction on the minimal number of generators of $G_n$ that $H$ is
either free pro-$p$ or abelian. The induction starts since the
minimal number of generators for $G_n$, when $H$ exists, is 2 and
in this case $G_n$ is either free pro-$p$ or abelian.

The case $n = 0$ is obvious since free pro-$p$ groups  have only
free pro-$p$ subgroups. Thus we can assume that $n \geq 1$. Let
$G_n = G_{n-1} \amalg_C A$, where $A = C \times B \simeq \BZ_p^m$
and $C \simeq \BZ_p$. Let $D \simeq \BZ_p$ be a direct summand of
$B$ and $M = \overline{\langle \cup_{g \in G_n} D^g \rangle}$.
Thus $M$ is the kernel of the epimorphism of pro-$p$ groups
$\varphi : G_n = G_{n-1} \amalg_C A \to G_{n-1} \amalg_C (A / D)
=L$ that is the identity map on $G_{n-1}$ and the canonical
projection $A \to A / D$ on $A$. By Theorem \ref{strongKurosh} $M$
is free pro-$p$. Note that $L$ is a pro-$p$ group from the class
$\mathcal{L}$ with less generators than $G_n$. By induction
$\varphi(H)$ is either free pro-$p$ or abelian. Note that by Lemma
\ref{torsion} $\varphi(H)$ is torsion-free.

1. If $\varphi(H)$ is free pro-$p$ of rank 2, since $H$ is
2-generated $H \simeq \varphi(H)$ and we are done.

2. If $\varphi(H) \simeq \BZ_p$ we get that $H$ is an extension of
the free pro-$p$ group $M_0 = M \cap H$ by $\BZ_p$. We claim that
every 2-generated pro-$p$ group from the class $\mathcal{L}$ that
is free-by-$\BZ_p$ is either free pro-$p$ or abelian. Since $H$ is
2-generated $M_0$ is generated as a normal closed subgroup of $H$
by just one element. Then for $V = M_0 / \overline{M_0^p [M_0,
M_0]}$ we have that $V$ is a cyclic $\BF_p[[Q]]$-module via
conjugation, where $Q = H / M_0 \simeq \BZ_p $. Suppose that $H$
is not abelian. By Theorem \ref{hyperbolic} $M_0$ is not finitely
generated as a pro-$p$ group, so $V$ is infinite and hence $V
\simeq \BF_p[[Q]]$. This implies together with Lemma
\ref{nakayama} that $H$ is a free pro-$p$ group.

3. If $\varphi(H) \simeq \BZ_p \times \BZ_p$ we have that $H$ is
an extension of the free pro-$p$ group $M_0 = M \cap H$ by $\BZ_p
\times \BZ_p$. Since $H$ is 2-generated, $M_0$ is generated as a
normal closed subgroup of $H$ by just one element, namely the
commutator of two generators of $H$. Then by Lemma \ref{nakayama}
either there is a normal finitely generated pro-$p$ subgroup of
infinite index in $H$, a contradiction with Theorem
\ref{normalabelian}, or  there is a normal free pro-$p$ subgroup
$F_0$ of $H$ such that $F_0 / M_0 \simeq \BZ_p$. In the last case
$H / F_0 \simeq \BZ_p$ and we can argue exactly as in the case 2.

4. If $\varphi(H)$ is the trivial group then $H$ is a pro-$p$
subgroup of the free pro-$p$ group $M$, so is free pro-$p$.
\end{proof}

\section{Euler characteristic}

By definition a pro-$p$ group $G$ from the class $\mathcal{L}$  is
a closed subgroup of some $G_n \in \mathcal{G}_n$, hence $cd(G)
\leq cd(G_n) < \infty$. Since $G$ is of type $FP_{\infty}$, see
Corollary \ref{infinity},  it has a well-defined Euler
characteristic $\chi(G) = \sum_{0 \leq i \leq cd(G)} (-1)^i
dim_{\BF_p} H_i(G, \BF_p)$.

\begin{theorem} \label{Eulerchar} Every pro-$p$ group $G$ from the class
$\mathcal{L}$ has Euler characteristic $\chi(G) \leq 0$.
\end{theorem}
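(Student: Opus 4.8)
The plan is to exploit the structure of $G$ as a free-by-(torsion free poly-procyclic) pro-$p$ group (Proposition \ref{freenilpotent}) together with the Lyndon--Hochschild--Serre spectral sequence, reducing the computation of $\chi(G)$ to a rank computation over a suitable Iwasawa algebra. Fix a closed normal free pro-$p$ subgroup $N$ of $G$ with $Q = G/N$ torsion free poly-procyclic. If $Q$ is trivial then $G = N$ is a nontrivial finitely generated free pro-$p$ group, so $\chi(G) = 1 - d(G) \le 0$ and we are done; assume therefore $Q \neq 1$. Write $\Lambda = \BF_p[[Q]]$ and $V = H_1(N, \BF_p)$, regarded as a $\Lambda$-module via the conjugation action of $Q$. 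Since $G$ is finitely generated, $V$ is finitely generated over $\Lambda$ (this is the $FP_1$ instance of \cite{King} already used in Corollary \ref{infinity}).

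First I would record the shape of the homology spectral sequence $E^2_{pq} = H_p(Q, H_q(N, \BF_p)) \Rightarrow H_{p+q}(G, \BF_p)$. As $N$ is free pro-$p$ we have $\mathrm{cd}(N) \le 1$, so $H_q(N, \BF_p) = 0$ for $q \ge 2$ and only the rows $q = 0$ (with coefficients $H_0 = \BF_p$) and $q = 1$ (with coefficients $V$) survive. Both $Q$ and the coefficient modules are such that every entry $E^2_{pq}$ is finite-dimensional over $\BF_p$ and vanishes for $p > \mathrm{cd}(Q)$: indeed $Q$ is poly-procyclic of finite cohomological dimension (Lemma \ref{torsion}) and of type $FP_{\infty}$, while $V$ is finitely generated over $\Lambda$. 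Hence the abutment is finite-dimensional in each degree and the Euler characteristic is read off the $E^2$-page, using that the two nonzero rows contribute with opposite signs:
\[
\chi(G) \;=\; \sum_{p}(-1)^p \dim_{\BF_p} H_p(Q, \BF_p) \;-\; \sum_p (-1)^p \dim_{\BF_p} H_p(Q, V) \;=\; \chi(Q, \BF_p) - \chi(Q, V),
\]
where $\chi(Q, W) := \sum_p (-1)^p \dim_{\BF_p} H_p(Q, W)$ for a finitely generated $\Lambda$-module $W$.

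The heart of the argument is then to identify $\chi(Q, W)$ with the $\Lambda$-rank of $W$. The ring $\Lambda$ is a Noetherian local domain of finite global dimension equal to $\mathrm{cd}(Q)$: torsion free poly-procyclic pro-$p$ groups are $p$-adic analytic, their completed $\BF_p$-group algebras are Noetherian domains, and finiteness of the global dimension reflects the Poincar\'e duality enjoyed by $Q$. Consequently every finitely generated $\Lambda$-module $W$ admits a finite free resolution $0 \to \Lambda^{b_h} \to \cdots \to \Lambda^{b_0} \to W \to 0$. Applying $\BF_p \,\widehat{\otimes}_{\Lambda} -$ computes $H_p(Q, W) = \mathrm{Tor}_p^{\Lambda}(\BF_p, W)$ from this resolution, whence $\chi(Q, W) = \sum_i (-1)^i b_i$; applying instead $- \widehat{\otimes}_{\Lambda} \mathrm{Frac}(\Lambda)$ shows the very same alternating sum equals $\mathrm{rank}_{\Lambda}(W) \ge 0$. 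Feeding in $W = \BF_p$, which is $\Lambda$-torsion since $Q \neq 1$ and hence of rank $0$, gives $\chi(Q, \BF_p) = 0$; feeding in $W = V$ gives $\chi(Q, V) = \mathrm{rank}_{\Lambda}(V) \ge 0$. Therefore $\chi(G) = -\,\mathrm{rank}_{\Lambda}(V) \le 0$, as required.

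I expect the main obstacle to be the ring-theoretic input of the third paragraph: confirming that $\Lambda = \BF_p[[Q]]$ is a Noetherian domain of finite global dimension (so that both the finite free resolution and the well-defined rank over $\mathrm{Frac}(\Lambda)$ are available), and checking that the Euler characteristic with module coefficients is correctly computed by the alternating sum of Betti numbers over $\Lambda$ and, compatibly, over its skew field of fractions. Once the identity $\chi(Q, W) = \mathrm{rank}_{\Lambda}(W)$ is secured, the non-positivity of $\chi(G)$ follows at once from the two-row shape of the spectral sequence, the crucial point being that the free coefficient row ($W = \BF_p$) contributes $0$ while the row carrying $V$ contributes a nonnegative rank with a minus sign.
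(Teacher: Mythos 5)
Your argument is correct, and it arrives at the same key identity as the paper, namely $\chi(G) = -\,\mathrm{rank}_{\BF_p[[Q]]} H_1(N,\BF_p) \le 0$, but by a genuinely different route. The paper never invokes the Lyndon--Hochschild--Serre spectral sequence: starting from a finite free resolution $\mathcal{P}$ of $\BZ_p$ over $\BZ_p[[G]]$ with ranks $\alpha_i$ (available by Corollary \ref{infinity} and Lemma \ref{torsion}), it observes that $\mathcal{P}\widehat{\otimes}_{\BZ_p[[N]]}\BF_p$ is a complex of free $\BF_p[[Q]]$-modules of the same ranks whose positive-degree homology is exactly $V = H_1(N,\BF_p)$, and then applies $-\otimes_{\BF_p[[Q]]}K$; exactness of Ore localization gives $\chi(G) = \sum_i(-1)^i\alpha_i = -\dim_K(V\otimes_{\BF_p[[Q]]} K)$ in one stroke. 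Thus the paper's only ring-theoretic input is that $\Lambda = \BF_p[[Q]]$ is a Noetherian domain (\cite[Cor.~7.25]{Marcusbook}) with a classical skew field of fractions: the free $\Lambda$-complex is inherited from the $G$-resolution, so no locality, finite global dimension, or projective-implies-free facts are needed. Your route, by contrast, resolves the module $V$ over $\Lambda$ itself, which requires exactly those extra inputs (all true and standard: $\mathrm{gld}\,\BF_p[[Q]] = \mathrm{cd}(Q)$ by Brumer's theorem, and finitely generated projectives over the local ring $\Lambda$ are free), but it buys three things: you only use the $FP_1$ instance of \cite{King}, whereas the paper leans on the full $FP_\infty$ resolution; the finiteness of each $H_n(G,\BF_p)$ is re-proved by the spectral sequence rather than presupposed; and your $q=0$ row is handled cleanly, whereas the paper's displayed claim that $H_i(\mathcal{P}\widehat{\otimes}_{\BZ_p[[N]]}\BF_p)$ vanishes for all $i \neq 1$ overlooks $H_0 \cong \BF_p$ --- harmless, since $\BF_p\otimes_{\Lambda} K = 0$, which is precisely your observation that the trivial module has rank zero. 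Two small repairs to your write-up: Lemma \ref{torsion} does not yield $\mathrm{cd}(Q) < \infty$, since $Q$ is a quotient, not a subgroup, of some $G_n$; instead argue by induction on the Hirsch length of the torsion-free poly-procyclic group $Q$ (or via your $p$-adic analyticity remark). And, exactly as in the paper, $G$ must implicitly be nontrivial (the trivial group lies in $\mathcal{L}$ and has $\chi = 1$), which your phrase ``nontrivial finitely generated free pro-$p$ group'' in the $Q=1$ case quietly assumes.
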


\begin{proof}
Let $N$ be a closed normal subgroup of $G$ such that $Q = G / N$
is torsion-free nilpotent and $N$ is free pro-$p$ (see Proposition
\ref{freenilpotent}). Let
$$
{\mathcal P} : 0 \to P_m \to P_{m-1} \to \ldots \to P_1 \to P_0
\to \BZ_p \to 0
$$
be a free resolution of the trivial right $\BZ_p[[G]]$-module
$\BZ_p$ with $P_i$ finitely generated, say of rank $\alpha_i$, for
all $i$.  Note that
$$
H_i({\mathcal P} \widehat{\otimes}_{\BZ_p[[N]]} \BF_p) = H_i(N,
\BF_p) \hbox{ for } i \geq 1.
$$
Since $N$ is a free pro-$p$ group
\begin{equation} \label{homology10}
H_i({\mathcal P} \widehat{\otimes}_{\BZ_p[[N]]} \BF_p) = 0 \hbox{
for } i \not= 1
\end{equation}
and
$$
H_1({\mathcal P} \widehat{\otimes}_{\BZ_p[[N]]} \BF_p) = N /
\overline{[N,N] N^p} = V.
$$

Since $Q$ is a torsion-free nilpotent pro-$p$ group and hence of
finite rank, $\BF_p[[Q]]$ is a right and left Noetherian ring
without zero divisors \cite[Cor. 7.25]{Marcusbook}.
 Then $\BF_p[[Q]]$ is an Ore ring and has a classical ring of quotients,
denoted by $K$. Note that  the abstract tensor
 product $\otimes_{\BF_p[[Q]]} K$ is an exact functor and $$V
\otimes_{\BF_p[[Q]]} K
 \simeq H_1({\mathcal P} \widehat{\otimes}_{\BZ_p[[N]]} \BF_p)
\otimes_{\BF_p[[Q]]} K
 \simeq H_1(({\mathcal P} \widehat{\otimes}_{\BZ_p[[N]]} \BF_p)
\otimes_{\BF_p[[Q]]} K) \simeq K^{z} $$ for some non-negative
integer $z$. Then using that $(P_i \widehat{\otimes}_{\BZ_p[[N]]}
\BF_p) \otimes_{\BF_p[[Q]]} K \simeq K^{\alpha_i}$ and
(\ref{homology10}) we get
 $$
 \chi(G) = \sum_i (-1)^i \alpha_i =
 \sum_i (-1)^i dim_{K} H_i (({\mathcal P} \widehat{\otimes}_{\BZ_p[[N]]}
\BF_p) \otimes_{\BF_p[[Q]]} K).$$
 Note that since $- \otimes_{\BF_p[[Q]]} K$ is an exact functor it commutes
with homology, hence
 $$H_i (({\mathcal P} \widehat{\otimes}_{\BZ_p[[N]]} \BF_p)
\otimes_{\BF_p[[Q]]} K) \simeq
 (H_i ({\mathcal P} \widehat{\otimes}_{\BZ_p[[N]]} \BF_p))
\otimes_{\BF_p[[Q]]} K.
 $$ Then
 $$ \chi(G) = \sum_i (-1)^i dim_{K} ((H_i ({\mathcal P}
\widehat{\otimes}_{\BZ_p[[N]]} \BF_p)) \otimes_{\BF_p[[Q]]} K ) =
$$ $$
 -  dim_{K}( H_1 ({\mathcal P} \widehat{\otimes}_{\BZ_p[[N]]} \BF_p)
\otimes_{\BF_p[[Q]]} K) = -z \leq 0.
 $$
\end{proof}
\begin{theorem} \label{free-abelian} Let $G$ be a  pro-$p$ group from the
class $\mathcal{L}$ that is free-by-abelian and such that its
Euler characteristic $\chi(G) = 0$. Then $G$ is abelian.
\end{theorem}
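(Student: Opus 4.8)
The plan is to combine the computation of $\chi(G)$ carried out in the proof of Theorem \ref{Eulerchar} with the structural Theorem \ref{normalabelian}. Write the free-by-abelian structure as a short exact sequence $1 \to F \to G \to Q \to 1$ with $F$ free pro-$p$ and $Q$ abelian. First I would normalize the decomposition so that $Q$ is torsion-free: as $Q$ is a finitely generated abelian pro-$p$ group, $Q \cong \BZ_p^r \times T$ with $T$ finite; replacing $F$ by the preimage $F'$ of $T$, the group $F'$ is torsion-free (Lemma \ref{torsion}) and contains the open free pro-$p$ subgroup $F$, hence is itself free pro-$p$ by \cite{S-65}. So we may assume $Q \cong \BZ_p^r$, and then $R := \BF_p[[Q]]$ is a Noetherian domain with Ore field of fractions $K$. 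Since $Q$ is torsion-free nilpotent of finite rank, the proof of Theorem \ref{Eulerchar} applies verbatim to this decomposition and yields $\chi(G) = -\dim_K(V \widehat{\otimes}_R K)$, where $V = H_1(F,\BF_p) = F/\overline{[F,F]F^p}$ is a finitely generated $R$-module (because $G$ is finitely generated). Thus the hypothesis $\chi(G)=0$ says precisely that $V$ is a torsion $R$-module.

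I dispose of the degenerate cases first. If $F = 1$ then $G \cong Q$ is abelian; if $Q = 1$ then $G = F$ is free with $\chi(G) = 1 - d(F) = 0$, forcing $d(F)=1$ and $G \cong \BZ_p$; and if $V = 0$ then $F = 1$. So from now on assume $F \neq 1$, $Q \neq 1$ and $V \neq 0$. Then $J := \mathrm{Ann}_R(V)$ is a nonzero proper ideal: nonzero because $V$ is finitely generated and torsion over a domain, and proper because $V \neq 0$. Hence $J$ is contained in the augmentation ideal, and every $0 \neq a \in J$ is a non-unit.

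The crux is to produce a finitely generated closed normal subgroup of $G$ of infinite index, after which Theorem \ref{normalabelian} finishes the proof. By Lemma \ref{intersection} there is an epimorphism $\varphi : Q \to \BZ_p$ whose induced ring map $\hat\varphi : R \to \BF_p[[\BZ_p]]$ does not kill $a$. Put $Q_0 = \ker\varphi \cong \BZ_p^{r-1}$ and choose $c \in Q$ mapping to a generator of $\BZ_p$, so that $R \cong \BF_p[[Q_0]][[s]]$ with $s = c-1$, and $\hat\varphi$ is reduction of coefficients modulo the maximal ideal $\mathfrak m_0$ of the complete local ring $\BF_p[[Q_0]]$; thus $\ker\hat\varphi = \mathfrak m_0[[s]]$. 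Since $\hat\varphi(a)\neq 0$, the element $a$ is regular in $s$, and by the Weierstrass preparation theorem over $\BF_p[[Q_0]]$ we may write $a = uP$ with $u$ a unit and $P$ a Weierstrass (monic) polynomial in $s$. Consequently $R/(a)=R/(P)$ is a free $\BF_p[[Q_0]]$-module of finite rank; and since $V$ is annihilated by $a$ and finitely generated over $R$, it is finitely generated over $\BF_p[[Q_0]]$.

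Finally let $N$ be the preimage of $Q_0$ in $G$, giving $1 \to F \to N \to Q_0 \to 1$ with $Q_0 \cong \BZ_p^{r-1}$ finitely generated. As $V = H_1(F,\BF_p)$ is finitely generated over $\BF_p[[Q_0]]$, the coinvariants $V_{Q_0} = V\widehat{\otimes}_{\BF_p[[Q_0]]}\BF_p$ are finite dimensional, so the five-term homology sequence of this extension exhibits $H_1(N,\BF_p)$ as an extension of the finite group $H_1(Q_0,\BF_p)$ by a quotient of $V_{Q_0}$; hence $H_1(N,\BF_p)$ is finite and $N$ is finitely generated as a pro-$p$ group. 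Now $N$ is normal in $G$ (preimage of a subgroup of the abelian group $Q$), non-trivial (it contains $F \neq 1$), and of infinite index since $G/N \cong Q/Q_0 \cong \BZ_p$. By Theorem \ref{normalabelian}, $G$ is abelian, completing all cases. I expect the main obstacle to be exactly the commutative-algebra step of the preceding paragraph: upgrading the single annihilator relation $a \in \mathrm{Ann}(V)$ into genuine module-finiteness of $V$ over a corank-one subalgebra $\BF_p[[Q_0]]$, which is where Lemma \ref{intersection} (to locate a good $\varphi$) and Weierstrass preparation (to control $R/(a)$) are both essential.
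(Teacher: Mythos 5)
Your proposal is correct, and its skeleton is the same as the paper's: reduce to a torsion-free abelian quotient $Q$, use the computation of Theorem \ref{Eulerchar} to see that $\chi(G)=0$ forces $V=H_1(F,\BF_p)$ to be a torsion $\BF_p[[Q]]$-module with non-zero annihilator, use Lemma \ref{intersection} to find an epimorphism $\varphi:Q\to\BZ_p$ along which the annihilator survives, conclude that the preimage of $\ker\varphi$ is a finitely generated normal subgroup with quotient $\BZ_p$, and finish by the normal-subgroup results. You deviate from the paper in three local steps, each legitimately. First, the normalization: the paper passes to a finite-index subgroup of $G$ so that $G/F$ becomes torsion-free (which tacitly uses multiplicativity of $\chi$ under finite index and Corollary \ref{virtualabelian} to return to $G$), whereas you enlarge $F$ to the preimage of the torsion subgroup of $Q$, using Lemma \ref{torsion} and Serre's theorem \cite{S-65}; this keeps $G$ itself fixed and is arguably the cleaner fix. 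Second, the commutative-algebra crux: the paper quotes the proof of Lemma \ref{nakayama}, i.e. the finiteness of $(\BF_p[[Q]]/I)\widehat{\otimes}_{\BF_p[[\ker\varphi]]}\BF_p\cong\BF_p[[\BZ_p]]/\hat\varphi(I)$ followed by the topological Nakayama lemma, to get that $\BF_p[[Q]]/I$, and hence $V$, is module-finite over $\BF_p[[\ker\varphi]]$; you instead apply Weierstrass preparation over the complete local ring $\BF_p[[\ker\varphi]]$ to a single non-zero annihilator $a$, obtaining that $R/(a)$ is free of finite rank. Both are standard; Nakayama is softer and already set up inside the paper, while Weierstrass gives the sharper structural conclusion but is imported from outside. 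Third, you conclude via Theorem \ref{normalabelian} (which is why you must check $N\neq 1$, done through your degenerate-case analysis), while the paper invokes Lemma \ref{hyperbolic} directly, which needs no non-triviality hypothesis; the two are interchangeable since the Theorem is deduced from the Lemma. Your explicit five-term-sequence verification that $N$ is finitely generated is a detail the paper leaves implicit, and it is correct.
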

\begin{proof} Let $F$ be a closed normal subgroup of $G$ such that $F$ is
free pro-$p$ and $Q = G / F$ is abelian. By going down to a
subgroup of finite index if necessary we can assume that $G / F$
is torsion-free. By the proof of Corollary \ref{Eulerchar}
$\chi(G) = -z$ where $dim_K(V \otimes_{\BF_p[[Q]]} K) = z$, $V  =
H_1(F, \BF_p)$ and $K$ is the abstract field of fractions of
$\BF_p[[Q]]$. Since $\chi(G) = 0$ we have that $z = 0$ and $V
\otimes_{\BF_p[[Q]]} K = 0$. So the annihilator
$ann_{\BF_p[[Q]]}(V) = I$ is non-zero. By the proof of Lemma
\ref{nakayama} there is an epimorphism $\varphi : Q \to Q_1 \simeq
\BZ_p$ such that $\BF_p[[Q]] / I$ is finitely generated as a
pro-$p$ $\BF_p[[Ker(\varphi)]]$-module. Then $V$ is finitely
generated as a pro-$p$ $\BF_p[[Ker(\varphi)]]$-module.

Let $M$ be the preimage of $Ker(\varphi)$ in $G$. Then $M$ is a
normal closed subgroup of $G$, $M$ is finitely generated and $G /
M \simeq \BZ_p$. By Lemma \ref{hyperbolic} $G$ is abelian.
\end{proof}

\section{Open questions}

Here we list  possible properties of the pro-$p$ groups $G$ from
the class $\mathcal{L}$.

1. $def(G) \geq 2$ if every abelian pro-$p$ subgroups of $G$ is
procyclic and $G$ itself is not procyclic, where $def(G) =
dim_{F_p} H^1(G, \BF_p) - dim_{F_p} H^2(G, \BF_p)$ denotes the
deficiency of $G$.

2. Every finitely generated subgroup of a pro-$p$ group from the
class $\mathcal{L}$ is a virtual retract.

3. The Euler characteristic $\chi(G) = 0$ if and only if  $G$ is
abelian. In the particular case when $G$ is free-by-abelian this
holds by Theorem \ref{free-abelian}.

4. The Howson property:  the intersection of any two finitely
generated pro-$p$ subgroups of $G$ is finitely generated.

5. $G$ is residually free pro-$p$.

6. $G$ is residually torsion-free pro-$p$ nilpotent.

\medskip
Note that 5 implies 6. For abstract limit groups a stronger
version of 5 holds as abstract limit groups are fully residually
free. In the abstract case property 4 was proved in \cite{KMRS}, 3
in \cite{desi},  2 in  \cite{Wilton}. Property 1 follows by
induction on the height $h$ of an abstract limit group: one uses
the fact that a freely indecomposable abstract limit group of
height $h \geq 1$ is the fundamental group of a finite graph of
groups with infinite cyclic edge groups such that at least one
vertex group is a non-abelian limit group of height $\leq h - 1$
\cite[Lemma~1.4]{BH}. The definition of height of an abstract
limit group can be found in \cite[Section~1]{BH} and differs from
our notion of weight by allowing almost all surface groups as
height zero limit groups.

\bigskip
{\bf Acknowledgements}

\medskip
The authors thank the referee for the many helpful remarks that
improved the paper.

\end{document}